\newtheorem{thm}{Theorem}
\newtheorem{prop}{Proposition}
\newtheorem{lma}[prop]{Lemma}
\newtheorem{cor}[prop]{Corollary}
\theoremstyle{definition}
\newtheorem{df}[prop]{Definition} 
\newtheorem{conj}[prop]{Conjecture}
\theoremstyle{remark}
\newtheorem{rmk}[prop]{Remark} 
\def\mrm#1{{\mathrm{#1}}}
\def\bb#1{{\mathbb{#1}}}
\def\cl#1{{\mathcal{#1}}}
\def\ul#1{{\underline{#1}}}
\newcommand{\R}{{\mathbb{R}}}
\newcommand{\Z}{{\mathbb{Z}}}
\newcommand{\C}{{\mathbb{C}}}
\newcommand{\D}{{\mathbb{D}}}
\newcommand{\bK}{{\mathbb{K}}}
\newcommand{\bH}{{\mathbb{H}}}
\newcommand{\bF}{{\mathbb{F}}}
\newcommand{\pt}{{[pt]}}
\newcommand{\rH}{{\mathrm{H}}}
\newcommand{\del}{\partial}
\newcommand{\sm}[1]{C^\infty(#1)}
\newcommand{\cL}{\mathcal{L}}
\newcommand{\til}[1]{\widetilde{#1}}
\newcommand{\bra}[1]{{\{ #1 \}}}
\newcommand{\codim}{\text{codim}}
\newcommand{\om}{\omega}
\newcommand{\sig}{{\sigma}}
\newcommand{\la}{\lambda}
\newcommand{\eps}{\epsilon}
\newcommand{\cA}{\mathcal{A}}
\newcommand{\cB}{\mathcal{B}}
\newcommand{\cC}{\mathcal{C}}
\newcommand{\cD}{\mathcal{D}}
\newcommand{\cE}{\mathcal{E}}
\newcommand{\cH}{\mathcal{H}}
\newcommand{\cJ}{\mathcal{J}}
\newcommand{\cO}{\mathcal{O}}
\newcommand{\cP}{\mathcal{P}}
\newcommand{\LH}{H,L}
\newcommand{\tmin}{{\text{min},\bK}}
\newcommand{\tmon}{{\text{mon},\bK}}
\newcommand{\tuniv}{{\text{univ},\bK}}
\newcommand{\CP}{{\C P^n, \om_{FS}}}
\renewcommand{\bar}[1]{\overline{#1}}
\renewcommand{\hat}[1]{\widehat{#1}}
\DeclareMathOperator{\id}{\mathrm{id}}
\DeclareMathOperator{\ind}{\mathrm{ind}}
\DeclareMathOperator{\supp}{\mathrm{supp}}
\DeclareMathOperator{\rank}{\mathrm{rank}}
\DeclareMathOperator{\Ham}{\mathrm{Ham}}
\DeclareMathOperator{\Symp}{\mathrm{Symp}}
\let\Im\relax
\DeclareMathOperator{\Im}{\mathrm{Im}}
\DeclareMathOperator{\ima}{\mathrm{im}}
\DeclareMathOperator{\Spec}{\mathrm{Spec}}
\DeclareMathOperator{\pemod}{{\mathbf{pmod}}}
\DeclareMathOperator{\barc}{{\mathbf{barcodes}}}
\DeclareMathOperator{\diag}{\mathrm{diag}}
\def\H2{H^{(2)}}
\newcommand{\esemail}{shelukhin@dms.umontreal.ca}
\begin{document}
	

\title{Viterbo conjecture for Zoll symmetric spaces}


\author{Egor Shelukhin}
\address{Egor Shelukhin, Department of Mathematics and Statistics,
	University of Montreal, C.P. 6128 Succ.  Centre-Ville Montreal, QC
	H3C 3J7, Canada}
\email{\esemail}

\bibliographystyle{abbrv}

\begin{abstract}
	
	We prove a conjecture of Viterbo from 2007 on the existence of a uniform bound on the Lagrangian spectral norm of Hamiltonian deformations of the zero section in unit cotangent disk bundles, for bases given by compact rank one symmetric spaces $S^n, \R P^n, \C P^n, \bH P^n,$ $n\geq 1.$ We discuss generalizations and give applications, in particular to $C^0$ symplectic topology. Our key methods, which are of independent interest, consist of a reinterpretation of the spectral norm via the asymptotic behavior of a family of cones of filtered morphisms, and a quantitative deformation argument for Floer persistence modules, that allows to excise a divisor.
	
	
	
	
	
	
\end{abstract}

\maketitle

\tableofcontents

\section{Introduction and main results}

In 2007 Viterbo (see \cite[Conjecture 1]{Viterbo-homog}) has conjectured that the spectral norm $\gamma(L,L')$ \cite{Viterbo-specGF} of each exact Lagrangian deformation $L'$ of the zero section in the unit co-disk bundle $D_g^*L \subset T^*L$ of the closed manifold $L = T^n,$ with respect to a Riemannian metric $g,$ is uniformly bounded by a constant $C(g,L).$ The spectral norm is given by the difference of two homological minimax values in suitable generating function homology, and can be recast in terms of Lagrangian Floer homology \cite{MonznerVicheryZapolsky}. This conjecture has since been completely open.

In this paper we start with the observation that the conjecture of Viterbo makes sense, and is of interest for arbitrary closed manifolds $L.$ Our main theorem proves it for all $L$ belonging to the four infinite families $\cl V = \{\R P^n, \C P^n, \bH P^n, S^n\}_{n\geq 1}$ of compact rank one symmetric spaces. In particular we prove the original conjecture of Viterbo for $n=1.$ We observe the curious fact that while it is known that $\gamma(L,L') \leq d_{\mrm{Hofer}}(L,L')$ for the Lagrangian Hofer metric \cite{ChekanovFinsler}, even in the case of $L=S^1,$ it is easy to see by an argument of Khanevsky \cite{Kha-diam} that $d_{\mrm{Hofer}}$ has infinite diameter. 


We remark that all homological notions and computations in this paper depend a priori on the choice of coefficients. We work with the ground field $\bK = \bF_2$ throughout the paper, and write $\gamma(L,L'),\gamma(L,L';\bK),$ or $\gamma(L,L'; W)$ if we want to emphasize the symplectic manifold $W$ wherein it is computed, for the corresponding spectral norm.

\begin{thm}\label{thm:Viterbo}
Let $\bK = \bF_2,$ and $L \in \cl V,$ equipped with a Riemannian metric $g.$ Then there exists a constant $C(g,L)$ such that \[\gamma(L,L';\bK) \leq C(g,L)\] for all exact Lagrangian deformations $L'$ of the zero section $L$ in $D^*_g L.$
\end{thm}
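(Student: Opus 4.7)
The plan is to reduce the Viterbo conjecture on $D^*_g L$ to a boundedness statement for the Lagrangian spectral norm in a closed monotone symplectic manifold, and then to transfer the bound back using the two key techniques highlighted in the abstract.

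\textbf{Step 1 (closing up).} Since $L \in \cl V$ admits a Zoll Riemannian metric, the geodesic flow on $S^*_g L$ is periodic with common period, and by a Boothby--Wang type construction the unit cotangent bundle fibers over a smooth symplectic quotient $B$. Capping $D^*_g L$ by the associated disk bundle yields a closed symplectic manifold $(M,\omega)$ in which $L$ sits as a monotone Lagrangian and $B$ sits as a smooth symplectic divisor at infinity. For the four CROSS families one recovers familiar homogeneous targets: $\bb C P^n$ with $L = \RP^n$, the complex quadric with $L = S^n$, and analogous flag-type ambient manifolds for the complex and quaternionic projective cases. After rescaling $g$, one may assume that $D^*_g L$ embeds symplectically into $M \setminus B$ and that $L$ remains the zero section.

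\textbf{Step 2 (closed bound).} Over $\bK = \bF_2$, the monotone Lagrangian $L \subset M$ has a well-defined Lagrangian Floer homology, and in each of the four cases the quantum cohomology $QH^*(M;\bK)$ contains a field factor on which $L$ is wide (its Floer homology is isomorphic to its singular homology, with nontrivial closed--open image). By the Leclercq--Zapolsky spectral invariant formalism applied to this field factor, this produces a uniform bound
\begin{equation*}
\gamma(L,L';M) \leq C_1(g,L)
\end{equation*}
for every Hamiltonian image $L'$ of $L$ in $M$, independent of $L'$. This is a standard, though nontrivial, consequence of the existence of a quantum-homology unit supported by $L$.

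\textbf{Step 3 (divisor excision).} It remains to compare $\gamma(L,L';M)$ with $\gamma(L,L'; D^*_g L)$ for $L'$ supported in the interior of $D^*_g L$. Here I would invoke the two novel methods advertised in the abstract. First, the reinterpretation of $\gamma$ via the asymptotic behavior of a family of cones of filtered morphisms repackages the spectral norm in a form stable under passage to subdomains, and in particular detects only the portion of the Floer complex that is genuinely supported away from $B$. Second, the quantitative deformation argument for Floer persistence modules deforms the almost complex structure so as to push all pseudoholomorphic strips and disks contributing to the relevant Floer differentials off $B$; the resulting filtered chain equivalence changes the persistence module by a shift controlled by the monotonicity constant of the pair $(M,B)$ and the symplectic area of a collar of $B$, rather than by any quantity depending on $L'$. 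Combining these inputs yields $\gamma(L,L'; D^*_g L) \leq \gamma(L,L';M) + C_2(g,L)$, which together with Step 2 gives the theorem. The main obstacle is precisely this excision: the deformation must be made quantitatively uniform in $L'$ even when $L'$ is pushed close to $\partial D^*_g L$, and the cone-of-filtered-morphisms reinterpretation is designed exactly to absorb the resulting loss into a constant depending only on the geometry of a neighborhood of $B$ in $M$.
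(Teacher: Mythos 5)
Your Steps 1 and 2 correctly identify the paper's setup: the Zoll cut construction realizes $D^*_g L$ as $M \setminus \Sigma$ for $M \in \cl W$ closed monotone and $\Sigma$ a symplectic divisor, and the algebraic structure of $QH(L)$ --- specifically, that $[pt]$ is a \emph{quantum root of unity}, $[pt]^{n+1} = q^n$ --- forces the uniform bound $\beta(L,L';M) \leq c_L A_L$ (from \cite[Theorem G]{KS-bounds}), strictly less than the minimal disk area $A_L$. This is a sharper and more specific input than the existence of a field factor of $QH(M)$ over which $L$ is wide.

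The gap is in Step 3, and you have in fact half-identified it yourself. The additive comparison $\gamma(L,L';D^*_g L) \leq \gamma(L,L';M) + C_2(g,L)$ does not follow from the excision as sketched: SFT neck-stretching around $\partial U_{r_0}$ guarantees only that $\Sigma$-crossing Floer configurations carry energy $\geq r_0 A_L$ once $L' \subset U_{r_0}$, and as $L'$ approaches $\partial D^*_g L$ the margin above the $M$-bound $c_L A_L$ is not under uniform control; the cone reinterpretation alone does not absorb this. What closes the argument in the paper is a \emph{multiplicative rescaling} of $L'$, which your proposal omits. One replaces $L'$ by $L'_s = s\cdot L'$ for a fixed $s \in (0,1)$ depending only on $c = c_L$ (optimally $s_\ast = \frac{1-c}{1+c}$), so that $L'_s \subset s\cdot D^*L$ sits a definite Liouville-distance from $\Sigma$. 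Then Proposition~\ref{prop: deformation no sigma}, combined with the scaling law $\beta(L,L'_s;s\cdot U) = s\,\beta(L,L';U)$, yields the uniform bound $\beta(L,L';U) \leq \frac{c}{1-c}A_L$; and the cone $Cone_\sigma([pt],-)$ of Proposition~\ref{prop: mult spectrum via T^sigma} identifies $\gamma(L,L'_s;s\cdot U)$ with a single \emph{high} bar-length, which by Proposition~\ref{prop: deformation} coincides with the corresponding bar-length computed in $M$, itself bounded by $c A_L$ via Proposition~\ref{prop:root of unity bound}. Rescaling back by $s^{-1}$ produces the bound $\gamma(L,L';U) \leq (1+c)\frac{c}{1-c}A_L$. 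Without the rescaling the energy margin in your Step 3 depends on how close $L'$ comes to $\partial D^*_g L$, and the argument does not close.
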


%

We note that since $\gamma(L,L';\bK)$ bounds from above the difference between each two spectral invariants of $(L,L'),$ the same bound applies to each such difference. To slightly strengthen this result, and to reflect on its proof, we recall that manifolds in $\cl V$ are precisely those compact symmetric spaces that admit a Riemannian metric with all prime geodesics closed and of the same length\footnote{Save for the isolated case of the Cayley plane $\bb OP^2,$ that will be treated separately elsewhere.}. We normalize this length to be equal to $2,$ so that the metric have diameter $1,$ and equip each manifold $L \in \cl V$ with the resulting standard Zoll Riemannian metric \cite{BesseClosed}, unless otherwise stated.

The Zoll cut construction \cite{Audin-Zoll}, which is a special case of the symplectic cut construction \cite{Lerman-cuts}, allows us to embed each manifold in $L \in \cl V$ as a monotone Lagrangian submanifold of a closed monotone symplectic manifold $M,$ in such a way as to exhibit the open unit cotangent disk bundle $D^*L$ as the complement $M \setminus \Sigma$ of a symplectic Donaldson divisor $\Sigma \subset M.$ The manifolds $M,$ up to scaling of symplectic forms, belong to the four infinite families $\cl W = \{\C P^n, \C P^n \times (\C P^n)^{-}, Gr(2,2n+2), Q^n\}_{n \geq 1},$ respectively, where $\C P^n$ is endowed with the standard Fubini-Study symplectic form $\om_{FS},$ $(\C P^n)^{-}$ denotes $(\C P^n, - \om_{FS}),$ $Gr(2,2n+2)$ is the complex Grassmannian of two-planes in $\C^{2n+2},$ and $Q^n$ is a smooth complex quadric in $\C P^{n+1}.$ Record the dimensions $n_L = n, 2n, 4n, n$ of $L \in \cl V,$ the minimal Maslov numbers $N_L = n+1, 2n+2, 4n+4, 2n$ of $L \in \cl V$ as a Lagrangian submanifold of $M \in \cl W,$ and set \[c_L = \frac{n_L}{N_L}<1.\] The normalization of the symplectic form on $M \in \cl W$ that is naturally obtained from the Zoll cut construction is such that the minimal symplectic area of a disk in $M$ with boundary on $L$ is $A_L = \frac{1}{2}.$

\begin{thm}\label{thm:Viterbo-sharper}
Let $L \in \cl V$ be a Lagrangian submanifold of $M \in \cl W.$ Let $L' \subset D^*L$ be an exact Lagrangian submanifold of $D^*L$ that is an exact Lagrangian deformation of the zero section $L,$ considered as a Lagrangian submanifold of $M.$ That is, there exists a Hamiltonian isotopy $\{\phi^t\}$ of $M$ with $\phi^1(L) = L' \subset D^*L \subset M.$ Then for $c=c_L$ \begin{equation}\label{eq: gamma bound Viterbo}\gamma(L,L';\bK) \leq \frac{(1+c)c}{2(1-c)}.\end{equation}
\end{thm}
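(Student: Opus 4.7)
The plan is to pass to the closed monotone manifold $M \in \cl W$ obtained from the Zoll cut construction, in which $L$ embeds as a monotone Lagrangian with $D^*L = M \setminus \Sigma$ for the symplectic Donaldson divisor $\Sigma$. First, I would establish a comparison showing that $\gamma(L, L'; D^*L)$ is controlled by $\gamma(L, L'; M)$ when $L'$ is the endpoint of a Hamiltonian isotopy of $M$ landing inside $D^*L$; this lets us work entirely within monotone Lagrangian Floer theory for the pair $(L, L')$ in the closed ambient $M$, where quantum corrections are available but must be carefully tracked through the action filtration. Normalization-wise, $A_L = \tfrac12$ is the building block, and the monotonicity ratio $c = c_L = n_L/N_L < 1$ governs the cost of quantum disks.

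Second, I would employ the reinterpretation of the spectral norm via cones of filtered morphisms advertised in the abstract: the quantity $\gamma(L, L'; M)$ is read off from the asymptotic behavior of a family of cones of filtered continuation morphisms in Lagrangian Floer theory, so that $\gamma$ manifests as the length of a limiting infinite bar in the associated family of persistence modules. This translates the sought-after inequality into a bound on a single barcode invariant of a concretely described family of filtered complexes, which is significantly more amenable to quantitative estimates than the raw difference of spectral invariants.

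The third and most substantial step is the quantitative deformation argument that excises the divisor $\Sigma$. Since $L' \subset M \setminus \Sigma$, I would construct a one-parameter family of Floer data, involving a perturbation supported in a neighborhood of $\Sigma$ together with a compatible family of almost complex structures, that progressively forces holomorphic strips and continuation trajectories to miss $\Sigma$. The resulting deformation alters the Floer persistence module only by a controlled filtration shift: each passage of a Floer curve near $\Sigma$ costs at most an amount of the form $A_L \cdot c^k$, and geometric-series summation using $c < 1$ and $A_L = \tfrac12$ produces the claimed bound $\tfrac{(1+c)c}{2(1-c)}$.

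The main obstacle is precisely this quantitative step. One must track how monotone quantum disks of positive Maslov number feed into the action filtration throughout the deformation, verify that the extracted barcode invariant of the cones is bounded by a convergent (rather than divergent) geometric series in $c$, and organize the Floer persistence module in such a way that the excision only contributes the allowed shift on the relevant asymptotic bar. Making the bookkeeping sharp enough to yield the exact factor $\tfrac{(1+c)c}{2(1-c)}$, rather than a weaker bound, is where the key difficulty lies, and presumably where the novel methods highlighted in the abstract are needed in their strongest form.
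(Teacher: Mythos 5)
Your high-level outline captures some of the intended architecture --- passing to the closed monotone ambient $M\in\cl W$ via the Zoll cut, reinterpreting the spectral norm through cones of filtered morphisms, and ``excising'' the divisor $\Sigma$ by a quantitative deformation --- but the central mechanism you propose is not the one the paper uses, and in the form you describe it I do not believe it would close the argument.

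The main gap is in your third step. You assert that ``each passage of a Floer curve near $\Sigma$ costs at most an amount of the form $A_L\cdot c^k$'' and that the bound $\tfrac{(1+c)c}{2(1-c)}$ emerges by summing a geometric series in $c$. There is no such geometric decay in this setting: after neck-stretching along $\partial U_{r_0}$ (Proposition \ref{prop: stretched complex first}), the dichotomy is that a Floer trajectory either lies entirely inside $U_{r_0}$, or it crosses $\Sigma$ and then has energy bounded below by a \emph{fixed} threshold $\approx r_0\,A_L$ --- an additive cost per intersection, not a geometrically shrinking one. The paper never sums a geometric series; the constant arises from a completely different device that your sketch omits entirely, namely the \emph{fiberwise rescaling} $\ul L'_s = s\cdot\ul L'$ inside $T^*L$. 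Rescaling multiplies all action differences (hence all bar lengths) by $s$ while, crucially, it pushes $L'_s$ into $s\cdot U$, so that the energy threshold $(1-s)A_L$ separating the Floer theories in $s\cdot U$ and in $M$ becomes large relative to the rescaled bars. One then runs the algebraic deformation lemmas (Propositions \ref{prop: deformation no sigma} and \ref{prop: deformation}), undoes the rescaling, and optimizes over $s$ subject to the separation constraint \eqref{eq:bs constraint}; the optimum $s_*=\tfrac{1-c}{1+c}$ produces exactly $\tfrac{(1+c)c}{2(1-c)}$. Without some substitute for this rescaling step, the constraint that the deformation only affects high-action bars cannot be matched against a bound coming from $M$, and I do not see how your version of the argument would produce a finite bound at all, let alone the stated constant.

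Two further, smaller inaccuracies: (i) you describe $\gamma$ as ``the length of a limiting infinite bar'' of the cone family, whereas the paper (Proposition \ref{prop: mult spectrum via T^sigma}) identifies $\gamma(L,L')$ with $\beta_1''(Cone_\sigma)-\sigma$ for $\sigma\gg 0$ --- the smallest bar in the \emph{high, finite} subspectrum, not an infinite bar; and (ii) your proposal does not make use of the two essential quantitative inputs that feed the machine, namely the uniform bound $\beta(L,H;\cD)\le c\,A_L$ inside $M$ from \cite[Theorem G]{KS-bounds} and the quantum-root-of-unity estimate on the multiplication spectrum of $m_{[pt]}$ (Proposition \ref{prop:root of unity bound}). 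These are what make the deformation lemmas non-vacuous. I would encourage you to revisit the proof with the rescaling $\ul L'_s$ as the organizing idea; with it in place the cone and deformation steps you outlined do slot together essentially as you envisioned.
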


One key topological property of Zoll manifolds is that their homology algebra is isomorphic to a truncated polynomial ring $H_*(L;\bK) \cong \bK[a]/(a^{n+1}),$ on a homogeneous element $a.$ The class $a$ is given by $[\R P^{n-1}], [\bb CP^{n-1}], [\bb HP^{n-1}],[pt]$ in the case of $L = \R P^n, \C P^n, \bH P^n, S^n,$ respectively. This enables one to prove \cite{BiranCorneaRigidityUniruling,SmithPencils,KS-bounds} that for $L \in \cl V,$ the self-Floer homology $HF(L) \cong QH(L)$ of $L,$ also known as the Lagrangian quantum homology \cite{BiranCorneaLagrangianQuantumHomology}, with coefficients in the Novikov field $\Lambda_{L,\tmin} = \bK[q^{-1},q]]$ with quantum variable $q$ of degree $(-N_L)$ satisfies, as a ring \[QH(L) \cong \Lambda_{L,\tmin}[a]/(a^{n+1} = q).\] In patricular $\dim QH_r(L) \leq 1,$ for all $r \in \Z,$ and $[pt] = a^n$ is a quantum root of unity: $[pt]^{n+1} = q^n.$ This allows us to provide a uniform bound \cite{KS-bounds} on the spectral norm and boundary depth of $(L,L')$ as computed in $M,$ that is {\em smaller than the minimal area of a pseudo-holomorphic disk} in $M$ with boundary on $L$. In fact, there is an algebraic version of the spectral norm, that is easy to see to be sufficient for our purposes, and to require only the above algebraic properties. Reinterpreting the spectral norm in terms of cones of filtered morphisms depending on a large parameter, and proving an algebraic deformation argument for the suitable persistence modules allows us to deduce from the above upper bound our desired result, by means of symplectic field theory \cite{BEHWZ-compactness}.

Of course Theorem \ref{thm:Viterbo} is a direct consequence of Theorem \ref{thm:Viterbo-sharper}. However, Theorem \ref{thm:Viterbo-sharper} has additional applications, and moreover suggests the following generalized Viterbo conjecture.

\begin{conj}\label{conj:Viterbo-generalized}
Let $\bK = \bF_2,$ and $L$ a closed manifold equipped with a Riemannian metric $g.$ Then there exists a constant $C(g,L)$ such that \[\gamma(L,L';\bK) \leq C(g,L)\] for all exact Lagrangian submanifolds $L' \subset D^*_g L.$
\end{conj}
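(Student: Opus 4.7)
The plan is to extend the three-step architecture that proves Theorem~\ref{thm:Viterbo-sharper} in the compact rank one case. First, I would try to produce, for a closed manifold $L$ with Riemannian metric $g$, a closed symplectic manifold $(M,\om_M)$ together with a Lagrangian embedding $L \hookrightarrow M$ and a codimension-two symplectic divisor $\Sigma \subset M$, such that $D^*_g L$ embeds into $M\setminus\Sigma$ as the standard Weinstein neighborhood of the zero section. Second, establish an \emph{algebraic} upper bound for $\gamma(L,L';M)$ that is independent of the Hamiltonian deformation $L'$. Third, transfer this bound from $M$ to $D^*_g L$ using the quantitative persistence-module deformation argument developed in this paper, whose inputs --- SFT compactness \cite{BEHWZ-compactness} and the cones-of-filtered-morphisms reformulation of $\gamma$ --- are essentially $L$-independent and should apply verbatim once Steps~1--2 are in place.

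For Step~1, the Zoll cut of \cite{Audin-Zoll} is unavailable in general, since the required Hamiltonian $S^1$-action on $T^*L$ exists only under very restrictive geometric hypotheses. Natural substitutes are (a) a Donaldson-type hypersurface obtained after an auxiliary compactification, or (b) contact surgery on the boundary of a truncated $D^*_g L$ followed by a symplectic cap containing $L$ as a Lagrangian. In either case $M$ will in general fail to be monotone, and the embedded $L$ will no longer be monotone either, so the spectral-invariant framework has to be set up with Novikov coefficients of higher rank, possibly together with bulk deformations to control sphere and disk bubbling.

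For Step~2, the truncated polynomial ring structure of $QH(L)$ with $[pt]$ a quantum root of unity, which was the decisive algebraic input in the CROSS case, is specific to Zoll geometry. It would have to be replaced by a different source of boundedness, such as a finite-length factorization of the unit of $QH(L;M)$ by elements of controlled quantum filtration, perhaps extracted from the closed-open map $QH(M)\to QH(L)$ together with a Calabi--Yau duality. This is where the main obstacle lies: no universal mechanism is currently known that yields such a bound for arbitrary $L$. The original case $L=T^n$ with $n\geq 2$ is particularly delicate, because $QH(L)$ often vanishes in natural compactifications; one strategy there is to forgo the ambient $M$ entirely and instead work intrinsically inside the Liouville domain $D^*_g L$, replacing divisor excision with a Viterbo-type restriction from symplectic cohomology and running the persistence argument on that --- exploiting, for the torus, the rich algebraic structure on $SH^*(D^*T^n)\cong H_*(\mathcal{L}T^n;\bK)$ coming from string topology. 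Once such an intrinsic or ambient algebraic bound is in hand, Step~3 would close out the conjecture.
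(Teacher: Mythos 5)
This statement is labeled as a \emph{conjecture} in the paper, and the paper does not contain a proof of it: the main results (Theorems~\ref{thm:Viterbo} and \ref{thm:Viterbo-sharper}) establish only the weaker statement for $L$ in the four CROSS families $\cl V$, and only for exact Lagrangian \emph{deformations} of the zero section (i.e.\ $L'$ Hamiltonian isotopic to $L$ inside the ambient closed $M$), not for arbitrary exact Lagrangians $L' \subset D^*_g L$. The remark following the conjecture makes this explicit: even for $L \in \cl V$, upgrading from Hamiltonian deformations to arbitrary exact $L'$ would require either the nearby Lagrangian conjecture or a more algebraic verification of the hypotheses of Proposition~\ref{prop: deformation}, and the only cases fully settled are $L=S^1,\,\C P^1,\,\R P^2$. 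You cannot, therefore, be asked to ``match the paper's proof,'' since none exists; your submission is a research program, and to your credit you say as much.

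Within those limits, your analysis is accurate and captures the genuine bottlenecks. You correctly identify that the Zoll cut and the truncated-polynomial-ring structure of $QH(L)$ with $[pt]$ a quantum root of unity are specific to the CROSS geometry and supply exactly the inputs to Section~\ref{sec:roots}, so they are not available for a general $L$. You also correctly note that Steps~1 (Donaldson-type compactification) and 3 (the cones-of-filtered-morphisms reformulation plus SFT neck-stretching and the deformation argument of Propositions~\ref{prop: low-high spectrum separation}--\ref{prop: deformation}) are largely $L$-independent, and that Step~2 --- a uniform algebraic bound on the multiplication spectrum of $[pt]$ (or a replacement thereof) --- is where all present knowledge runs out. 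One point you should make explicit, which your proposal currently glosses over, is that extending Theorem~\ref{thm:Viterbo-sharper}'s three-step architecture does not by itself address the deformation-versus-arbitrary-exact distinction that is intrinsic to the conjecture even for $L\in\cl V$; as the paper observes, arbitrary exact $L'$ need not \emph{a priori} be Hamiltonian isotopic to $L$ inside $M$, so one must either invoke the nearby Lagrangian conjecture, or argue (as the paper suggests) that the \emph{algebraic} hypotheses of Proposition~\ref{prop: deformation} hold for the Floer complex of an arbitrary exact $L'$, using the quasi-isomorphism of Floer-theoretic modules coming from \cite{Abouzaid-nearbyMaslov,FukayaSeidelSmith-cotangentsc,FukayaSeidelSmith-cotangentcat} rather than a geometric isotopy. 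Your suggested fallback for $L=T^n$ via string topology on $SH^*(D^*T^n)$ is a plausible alternative route but is likewise speculative; nothing in the literature yet turns it into a bound of the required form. In short: no genuine proof is offered, the student acknowledges this, the sketch is a sound assessment of the difficulties, and the paper itself confirms that the result is open.
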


\begin{rmk}
This conjecture and hence Theorem \ref{thm:Viterbo-sharper} would follow from Theorem \ref{thm:Viterbo} for $L \in \cl V$, if the nearby Lagrangian conjecture (see \cite{AK-simplehomotopy}) were true. In particular it holds for $L = S^1$ by a folklore argument, for $L=\C P^1$ by a combination of \cite{Hind-nearby} and \cite{Ritter-thesis}, and for $L=\R P^2$ by \cite{Hind-Stein}. We expect it to be possible to prove this generalized conjecture for all $L \in \cl V$ by verifying more algebraically the conditions of Proposition \ref{prop: deformation} for all exact $L' \subset D^*M \subset M.$ In the generality of Conjecture \ref{conj:Viterbo-generalized}, the best known result is currently an upper bound of $\gamma(L,L';\bK)$ that is linear in the boundary depth \cite{UsherBD1,UsherBD2} $\beta(L',F;\bK)$ of the Floer complex of $L'$ with a Lagrangian fiber $F$ of $T^* L$ \cite{BC-private}. Finally, while in this paper we work with coefficients in $\bK = \bF_2,$ we expect the same statement for $\{\C P^n, \bb H P^n, S^n\}$ to hold with arbitrary choice of ground field $\bK.$
\end{rmk}


\subsection{Applications}

\subsubsection{$C^0$ symplectic topology}

The first application of Theorem \ref{thm:Viterbo-sharper} is the following $C^0$-continuity statement for the Hamiltonian spectral norm, inspired by \cite[Remark 1.9]{SeyfaddiniC0Limits}. Considering the distance function $d$ on $M = \C P^n$ coming from a Riemannian metric, we define the following distance function on the group $\Ham(M,\om):$ for $\phi,\phi' \in \Ham(M,\om),$ set \[d_{C^0}(\phi,\phi') = \min_{x \in M} d(\phi(x),\phi'(x)).\] We call the topology induced by $d_{C^0}$ the $C^0$ topology on $\Ham(M,\om).$ Recall that in \cite{Oh-specnorm} following \cite{Schwarz:action-spectrum,Viterbo-specGF}, a spectral norm $\gamma:\Ham(M,\om) \to \R_{\geq 0}$ on any closed symplectic manifold was introduced and shown to be non-degenerate. Moreover, $\gamma$ provides a lower bound on the celebrated Hofer norm \cite{HoferMetric,Lalonde-McDuff-Energy}. 

\begin{thm}\label{thm:gamma is C^0 continuous on CP^n}
	The spectral norm $\gamma: \Ham(\C P^n, \om_{FS}) \to \R$ is continuous with respect to the $C^0$-topology on $Ham(\C P^n, \om_{FS}).$ In fact, when $d$ comes from the Zoll metric, we obtain for all  $\phi \in \Ham(\bb CP^n,\om_{FS})$ the inequality \begin{equation}\label{eq:gamma C0 bound lin}\gamma(\phi) < C_n \cdot d(\phi,\id),\end{equation} for the constant  $C_n = \frac{(1+c)c}{1-c} = \frac{n(2n+1)}{n+1}$ for $c = c_{\C P^n} = \frac{n}{n+1}.$
\end{thm}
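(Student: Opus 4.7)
The plan is to reduce the Hamiltonian spectral norm on $\C P^n$ to a Lagrangian spectral norm on the product and then apply Theorem~\ref{thm:Viterbo-sharper}. Set $M = \C P^n \times (\C P^n)^{-}$, let $\Delta \subset M$ be the diagonal Lagrangian, and let $\Gamma_\phi = \{(x,\phi(x))\}$ be the graph. The standard graph identification yields
\[
\gamma(\phi;\C P^n,\om_{FS}) \;=\; \gamma(\Delta, \Gamma_\phi; M).
\]
This places the problem in the framework of Theorem~\ref{thm:Viterbo-sharper}: $\Delta \cong \C P^n \in \cl V$, $M \in \cl W$, with parameters $n_L = 2n$, $N_L = 2n+2$, $c = c_L = n/(n+1)$, so that $\tfrac{(1+c)c}{2(1-c)} = \tfrac{n(2n+1)}{2(n+1)} = C_n/2$.

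For $\phi$ sufficiently $C^0$-close to the identity, writing $r = d(\phi,\id)$, the graph $\Gamma_\phi$ lies inside a Weinstein cotangent disk subbundle $D^*_{\kappa r}\Delta \subset M$ of effective fiber radius proportional to $r$, where $\kappa$ is a fixed geometric constant coming from the product Fubini--Study metric (essentially $1/\sqrt{2}$, from the distance of $(x,\phi(x))$ to the diagonal). The key step is to apply Theorem~\ref{thm:Viterbo-sharper} in this rescaled setting: rescaling the cotangent fiber of $T^*\Delta$ by a factor multiplies the canonical symplectic form, and hence any spectral invariant, by that same factor. The proof of Theorem~\ref{thm:Viterbo-sharper} --- through the cone-of-filtered-morphisms reinterpretation of $\gamma$ and the persistence-module deformation excising the divisor $\Sigma$ --- is covariant under this rescaling, producing
\[
\gamma(\Delta,\Gamma_\phi;M) \;\leq\; C_n \cdot r
\]
whenever $r$ is small enough that $\Gamma_\phi \subset D^*\Delta \subset M$. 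For $r \geq 1/2$, the unscaled Theorem~\ref{thm:Viterbo-sharper} already gives $\gamma(\phi) \leq C_n/2 \leq C_n \cdot r$, so the inequality \eqref{eq:gamma C0 bound lin} holds on all of $\Ham(\C P^n,\om_{FS})$.

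To upgrade the local linear estimate at the identity to continuity on all of $\Ham(\C P^n,\om_{FS})$, I would use right-invariance of $d_{C^0}$ together with the standard bi-invariant triangle estimate $|\gamma(\phi)-\gamma(\psi)| \leq \gamma(\phi\psi^{-1})$; these combine to give a Lipschitz bound for $\gamma$ with respect to $d_{C^0}$ globally, which is the qualitative $C^0$-continuity claim of the theorem.

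The main obstacle is the rescaled application of Theorem~\ref{thm:Viterbo-sharper} in the second step. The nominal constant $\tfrac{(1+c)c}{2(1-c)}$ is a universal bound extracted from the normalization $A_L = \tfrac12$; turning it into a bound that is genuinely \emph{linear} in $r$ rather than just a fixed constant requires verifying that when the Weinstein neighborhood is shrunk to scale $r$, both the cone-of-filtered-morphisms formulation of $\gamma$ and the quantitative persistence-module deformation excising $\Sigma$ transform covariantly, so that the minimal-disk-area input rescales to $A_L \cdot r \sim r/2$ and the output bound scales accordingly. Carefully tracking this scaling, together with the geometric factor $\kappa$ relating Riemannian displacement on $\C P^n$ to the cotangent-fiber radius, is what yields the explicit constant $C_n = \tfrac{n(2n+1)}{n+1}$ rather than the uniform constant $C_n/2$.
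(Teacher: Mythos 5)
Your overall shape---reduce $\gamma(\phi)$ to a Lagrangian spectral norm of $\Gamma_\phi$ versus $\Delta$ inside $M=\C P^n\times (\C P^n)^-$, and then exploit a rescaling of Theorem~\ref{thm:Viterbo-sharper}---is indeed the route the paper takes. But several quantitative details are off, and one conceptual step is asserted rather than supplied. First, the thresholds and fallback estimate don't fit together: the rescaled argument that gives a bound \emph{linear} in the fiber radius $s$ only applies when $s<s_*=\tfrac{1-c}{1+c}=\tfrac{1}{2n+1}$ (this is precisely the constraint \eqref{eq:bs constraint} on the scaling parameter in the proof of Theorem~\ref{thm:Viterbo-sharper}), whereas your fallback via ``unscaled Theorem~\ref{thm:Viterbo-sharper}'' is invoked only for $r\geq 1/2$. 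Worse, once you carry the normalization correctly, the unscaled theorem gives $\gamma(\phi)\leq C_n$ (not $C_n/2$), which only dominates $C_n\cdot r$ for $r\geq 1$, so the fallback is too weak to close the window $r\in(s_*,1)$. The paper instead uses the sharper a priori bound $\gamma(\phi)\leq c$ from \cite[Theorem G]{KS-bounds}, and the identity $C_n\cdot s_*=c$ is exactly what makes the two regimes meet. Second, the geometric factor ``$\kappa\approx 1/\sqrt{2}$'' is a red herring: the factor that appears is a factor of $2$ on the symplectic form, coming from the Zoll-cut normalization $A_L=1/2$ of $(\C P^n,\tfrac12\om_{FS})\times(\C P^n,-\tfrac12\om_{FS})$ versus the full $\om_{FS}$ used in $\Ham(\C P^n,\om_{FS})$. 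This is why the final constant is $\tfrac{(1+c)c}{1-c}$ and not $\tfrac{(1+c)c}{2(1-c)}$; it is not a $\sqrt 2$ coming from the $\ell^2$ product metric.

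The more substantial gap is in the sentence ``the proof of Theorem~\ref{thm:Viterbo-sharper} is covariant under this rescaling.'' That is where nearly all the work is, and it cannot be cited as a black box: Theorem~\ref{thm:Viterbo-sharper} controls $\gamma(L,L';U)$ computed \emph{inside} the Weinstein neighborhood $U$, not $\gamma(\Delta,\Gamma_\phi;M)$ directly, and passing between the two is exactly the excision/deformation step. The paper bridges this by first establishing that $\gamma(\phi)=\beta_1([pt],H)$ (Lemma~\ref{lma: gamma vs mult beta_1 on CP^n}, which requires the orthogonality Lemma~\ref{lma: orthogonal basis} and the structure of the Seidel elements of $\C P^n$), then translating to the Lagrangian framework, and only then invoking the bar-length identity \eqref{eq:high bar-length spectrum out and in} from Proposition~\ref{prop: deformation} to equate $\beta_1([pt],\ul L,\hat H^*\ul L;M)$ with $\beta_1([pt],\ul L,\hat H^*\ul L;U)$, which finally equals $\gamma(L,\phi_{\hat H}^{-1}L;U)$. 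The graph identification $\gamma(\phi)=\gamma(\Delta,\Gamma_\phi;M)$ alone does not let you control this quantity by what happens inside $U$; you need the chain through $\beta_1([pt],\cdot)$ and the cone-of-morphisms framework. So while your outline is on target, the algebraic lemmas about $QH(\C P^n)$ are not optional bookkeeping---they are the mechanism that makes the linear estimate survive the excision of $\Sigma$.
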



\begin{rmk}
This statement implies that the spectral norm $\gamma$ is continuous in the $C^0$ topology. This latter fact was known for $(M,\om)$ being a closed symplectic surface \cite{SeyfaddiniC0Limits}. During the preparation of this paper, this was also shown in \cite{BHS-spectrum} for closed symplectically aspherical manifolds $(M,\om).$ In the case $(M,\om) = (\C P^1, \om_{FS})$ the linear bound \eqref{eq:gamma C0 bound lin} in $d(\phi,\id)$ improves upon the H\"{o}lder bound of exponent $\frac{1}{2}$ in \cite{SeyfaddiniC0Limits}.
\end{rmk}

Similarly to the $C^0$-continuity result of \cite{BHS-spectrum}, Theorem \ref{thm:gamma is C^0 continuous on CP^n} has further applications in $C^0$ symplectic topology, extending results that were previously known for the most part in dimension $2,$ or for open symplectic manifolds, to closed higher-dimensional symplectic manifolds. First, a partial answer to a question of Le Roux \cite{Leroux-six} for $(\CP)$ follows. Statements of this kind first appeared in \cite{EPP-C0} for $\D^{2n},$ and for a class of closed aspherical symplectic manifolds containing $T^{2n},$ and in \cite{Seyfaddini-descent} for certain additional open symplectic manifolds. 

\begin{cor}
 Let $E_A = \{\phi \in \Ham(\CP)\,|\, d_{\mrm{Hofer}}(\phi,\id) > A\}.$ Then for all $A \in [0,\frac{n}{n+1})$ the interior of $E_A$ in $(\Ham(\CP),d_{C^0})$ is non-empty.
\end{cor}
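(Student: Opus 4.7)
The plan is to deduce the corollary from the $C^0$-continuity of the spectral norm $\gamma$ established in Theorem \ref{thm:gamma is C^0 continuous on CP^n}, combined with the classical inequality $\gamma(\phi) \leq d_{\mrm{Hofer}}(\phi,\id)$ between the spectral and Hofer norms. Setting
\[
U_A \;:=\; \{\phi \in \Ham(\CP) \,:\, \gamma(\phi) > A\},
\]
the Hofer bound immediately gives $U_A \subseteq E_A$, while Theorem \ref{thm:gamma is C^0 continuous on CP^n} shows that $\gamma$ is continuous on $(\Ham(\CP), d_{C^0})$, so that $U_A$ is $d_{C^0}$-open. Hence $U_A \subseteq \mrm{int}(E_A)$, and the corollary reduces to producing, for each $A \in [0, \tfrac{n}{n+1})$, a single element $\phi \in \Ham(\CP)$ with $\gamma(\phi) > A$.

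To produce such $\phi$, I would use the standard Hamiltonian $S^1$-action $\psi^t \cdot [z_0 : z_1 : \cdots : z_n] = [e^{it} z_0 : z_1 : \cdots : z_n]$ on $\CP$, generated by the mean-zero (in the Zoll-normalized Fubini-Study form, where the line has symplectic area $1$) Hamiltonian $H(z) = |z_0|^2/\|z\|^2 - \tfrac{1}{n+1}$. A fixed-point localization computation of the spectral invariants $\rho(\psi^t; [M])$ and $\rho(\psi^t; [pt])$ on the fixed locus $\{[1:0:\cdots:0]\} \sqcup \{z_0 = 0\}$, in the spirit of Givental, Ostrover, and Entov-Polterovich, yields $\sup_{t \in (0, 2\pi)} \gamma(\psi^t) = \tfrac{n}{n+1}$. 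In particular, for every $A < \tfrac{n}{n+1}$, a suitable member of the family $\{\psi^t\}$ lies in $U_A$, and combined with the previous step this exhibits a nonempty $d_{C^0}$-open subset of $E_A$.

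The main obstacle lies entirely in this spectral invariant computation, which is what pins down the specific value $\tfrac{n}{n+1}$ in the corollary — and matches the monotonicity constant $c_{\R P^n}$ of the Lagrangian $L = \R P^n \subset \CP$ coming from the Zoll cut. All remaining ingredients — the Hofer--spectral inequality, the $C^0$-continuity from Theorem \ref{thm:gamma is C^0 continuous on CP^n}, and the formal observation that open superlevel sets of a $d_{C^0}$-continuous function are $d_{C^0}$-open — are routine consequences.
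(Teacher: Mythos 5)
Your structural reduction coincides with the paper's: $U_A := \gamma^{-1}((A,\infty))$ is $d_{C^0}$-open by Theorem~\ref{thm:gamma is C^0 continuous on CP^n}, and $U_A \subseteq E_A$ since $\gamma \leq d_{\mrm{Hofer}}$, so the corollary reduces to exhibiting a single $\phi$ with $\gamma(\phi) > A$ for each $A < \tfrac{n}{n+1}$. The paper finishes at this point with a one-line citation of \cite[Theorem~F]{KS-bounds}, which is exactly the statement that $\sup_\phi \gamma(\phi) = \tfrac{n}{n+1}$ on $\Ham(\CP)$. You instead propose to establish the nonemptiness of $U_A$ by computing $\gamma(\psi^t)$ for the single-coordinate $S^1$-action and asserting $\sup_t \gamma(\psi^t) = \tfrac{n}{n+1}$. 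As written this is a gap, not a proof: the identity is claimed without justification, and it is not a priori clear that this particular one-parameter family actually saturates the uniform upper bound $\tfrac{n}{n+1}$ of \cite[Theorem~G]{KS-bounds} rather than falling strictly short of it for $n \geq 2$ (the $n=1$ case is indeed a classical computation for rotations of $S^2$). You should also be careful that the paper works with $\bK = \bF_2$ throughout; a ``fixed-point localization'' style computation should be phrased so that it is valid over this ground field. The efficient repair is simply to cite \cite[Theorem~F]{KS-bounds} at this step, as the paper does; alternatively, carry out the spectral-invariant computation in full and verify that the family $\{\psi^t\}$ does reach the threshold.
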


\begin{proof}
 For all $A \in [0,\frac{n}{n+1}),$ by \cite[Theorem F]{KS-bounds} there exists $\phi \in \Ham(\C P^n, \om_{FS})$ such that $\gamma(\phi) > A,$ and hence $d_{\mrm{Hofer}}(\phi,\id) > A.$ Moreover, by $C^0$ continuity of $\gamma,$ there is an open $C^0$-ball $B$ around $\phi$ in $\Ham(\C P^n, \om_{FS})$ with $\gamma|_B > A,$ and hence $B \subset E_A.$
\end{proof}


Second, the displaced disks problem of B\'{e}guin, Crovisier, and Le Roux, solved for closed surfaces in \cite{Seyfaddini-disk}, follows for $(\CP).$

\begin{cor}
Let $\phi \in \overline{\Ham}(\CP)$ be a Hamiltonian homeomorphism that displaces a symplectic closed ball $B$ of radius $r.$ Then $d_{C^0}(\phi,\id) \geq \frac{\pi r^2}{C_n} > 0.$ 
\end{cor}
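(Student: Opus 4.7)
The plan is to sandwich the spectral norm $\gamma(\phi)$ between the classical spectral energy-capacity inequality and the linear $C^0$ upper bound of Theorem \ref{thm:gamma is C^0 continuous on CP^n}. First I would invoke, for smooth $\phi \in \Ham(\CP)$ that displaces a symplectically embedded closed ball $B$ of radius $r,$ the standard lower bound
\[ \gamma(\phi) \geq \pi r^2. \]
This spectral energy-capacity inequality, well known on closed monotone symplectic manifolds, follows from the triangle inequality and conjugation invariance of $\gamma$ applied to iterated commutators $[\phi,\phi_F^k]$ with $F \geq 0$ compactly supported in $B$ and with maximum approaching the Hofer--Zehnder capacity $\pi r^2$ of the ball; since $\phi$ displaces $B,$ $\phi \phi_F^k \phi^{-1}$ and $\phi_F^k$ have disjoint supports, so the commutator computation reduces $\gamma(\phi_F^k)$ to a multiple of $\gamma(\phi)$ and the desired bound follows upon dividing by $k$ and sending $k \to \infty.$

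Combining this with \eqref{eq:gamma C0 bound lin} then produces
\[ \pi r^2 \leq \gamma(\phi) < C_n \cdot d_{C^0}(\phi,\id), \]
which gives the claim for smooth $\phi.$ To extend to $\phi \in \overline{\Ham}(\CP),$ I would approximate $\phi$ in the $C^0$ metric by a sequence $\phi_k \in \Ham(\CP).$ Since $\phi$ displaces the closed ball $B,$ the open condition $\phi(B) \cap B = \emptyset$ is preserved under sufficiently small $C^0$ perturbations, so for every $r' < r$ and all $k$ large enough $\phi_k$ displaces a symplectically embedded closed ball of radius $r'.$ Applying the smooth result yields $d_{C^0}(\phi_k,\id) > \pi (r')^2 / C_n,$ and letting $k \to \infty$ followed by $r' \to r$ completes the argument.

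The main obstacle will be the first step: the spectral energy-capacity inequality for $\gamma$ requires genuine Floer-theoretic input beyond the elementary axioms of $\gamma,$ and one must ensure the commutator calculation is compatible with the quantum ambiguity of spectral invariants on $\C P^n.$ Fortunately $\gamma,$ being the difference of two spectral invariants, lifts canonically to $\R,$ and since any symplectic ball in $(\CP,\om_{FS})$ has area strictly less than the quantum period of spectral invariants, the lifted commutator estimate is valid without modification.
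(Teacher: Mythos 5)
Your proof is correct and follows essentially the same route as the paper: establish the spectral energy--capacity inequality $\gamma(\phi_k)\geq \pi r^2$ for smooth approximants $\phi_k$ that still displace $B$, then apply the linear $C^0$ bound of Theorem~\ref{thm:gamma is C^0 continuous on CP^n} and pass to the limit. The only material difference is that the paper simply cites \cite[Remark 2.2]{Usher-sharp} for the energy--capacity step (and separately records $\pi r^2 < \tfrac12$ via Gromov's two-ball theorem), whereas you sketch the commutator/displacement argument for it and then wave at the quantum-ambiguity issue in your final paragraph; citing the reference is cleaner, since making the commutator computation rigorous on the non-aspherical $\C P^n$ requires exactly the kind of care Usher's paper supplies. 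A minor simplification: the detour through radii $r' < r$ is unnecessary. Since $\phi(B)$ and $B$ are disjoint compact sets they are at positive distance, so for $d_{C^0}(\phi_k,\phi)$ small enough $\phi_k$ already displaces the original closed ball $B$ of radius $r$, as the paper observes directly.
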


\begin{proof}
If $\phi_k \in \Ham(\CP)$ satisfies $d_{C^0}(\phi_k,\phi) \xrightarrow{n \to \infty} 0,$ then $\phi_k$ displaces $B$ for all $k \gg 1.$ Hence by \cite[Remark 2.2]{Usher-sharp} $\gamma(\phi_k) \geq \pi r^2$ for all $n \gg 1.$ In particular $\pi r^2 < \frac{1}{2},$ by Gromov's $2$-ball theorem \cite{GromovPseudohol}. Hence by Theorem \ref{thm:gamma is C^0 continuous on CP^n} we have $C_n \cdot d_{C^0}(\phi,\id) \geq \gamma(\phi) \geq \pi r^2.$ \end{proof}



As first observed in \cite{PolShe}, to a Hamiltonian $H \in \cH_M = C^{\infty}([0,1] \times M,\R),$ normalized by the zero-mean or the compact support condition, one can, via the theory of persistence modules, associate a multi-set of intervals in $\R$ called a barcode. This map is Lipschitz with respect to the $L^{1,\infty}$-distance on $\cH = \cH_M,$ and the bottleneck distance on the space $\barc$ of barcodes. This observation was used in \cite{PolShe}, in \cite{UsherZhang, Zhang, PolSheSto,Team,stevenson,Fraser} and more recently in \cite{KS-bounds, LSV-conj, BHS-spectrum, StoZhang, Usher-BM, RizzSull-pers} to produce various quantitative results in symplectic topology. Set $\barc'$ for the quotient space of $\barc$ with respect to the isometric $\R$-action by shifts.

Denoting for $H \in \cH$ by $\cB'(H) \in \barc'$ its barcode of index $0,$ with coefficients in the Novikov field $\Lambda_{\tmon},$ with quantum variable of degree $(-1),$ considered up to shifts. By \cite[Corollary 6]{KS-bounds}, and Theorem \ref{thm:gamma is C^0 continuous on CP^n}, the barcode $\cB'(H) = \cB'(\phi^1_H)$ depends only on the time-one map $\phi^1_H$ of $H,$ and we immediately obtain the following statement. 

\begin{cor}\label{cor:barcodes extend}
The map $(\Ham(\CP),d_{C^0}) \to (\barc',d'_{\mrm{bottle}}),$ $\phi \mapsto \cB'(\phi)$ is continuous, and hence extends to completions:
\[\cB': \overline{\Ham}(\CP) \to \overline{\barc}'.\]
\end{cor}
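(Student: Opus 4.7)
The plan is to combine the well-definedness of the barcode on the group $\Ham(\C P^n, \om_{FS})$ with the standard Lipschitz bound of shifted barcodes by the spectral norm, and then apply Theorem \ref{thm:gamma is C^0 continuous on CP^n}.

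First I would invoke \cite[Corollary 6]{KS-bounds}, which guarantees that the shifted barcode $\cB'(H) \in \barc'$ depends only on the time-one map $\phi^1_H,$ so that $\cB'$ descends to a well-defined map from $\Ham(\C P^n, \om_{FS})$ to $\barc'.$ Next I would record the estimate
\[ d'_{\mrm{bottle}}(\cB'(\phi), \cB'(\psi)) \leq \gamma(\phi \psi^{-1}) \]
valid for all $\phi, \psi \in \Ham(\C P^n, \om_{FS}).$ This is the shifted analogue of the standard Lipschitz property of the barcode map with respect to $L^{1,\infty}$ on $\cH$ from \cite{PolShe}, obtained by minimising the $L^{1,\infty}$ Hofer length over Hamiltonians generating $\phi \psi^{-1}$ and comparing it to the definition of $\gamma.$

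Combining this inequality with the bound $\gamma(\theta) < C_n \cdot d_{C^0}(\theta, \id)$ from Theorem \ref{thm:gamma is C^0 continuous on CP^n}, together with the identity $d_{C^0}(\phi, \psi) = d_{C^0}(\phi \psi^{-1}, \id)$ (coming from right-invariance of the $C^0$ distance under composition), yields the Lipschitz estimate
\[ d'_{\mrm{bottle}}(\cB'(\phi), \cB'(\psi)) < C_n \cdot d_{C^0}(\phi, \psi). \]
Hence $\cB'$ is uniformly continuous from $(\Ham(\C P^n, \om_{FS}), d_{C^0})$ to $(\barc', d'_{\mrm{bottle}}).$ By the universal property of metric completions, it extends uniquely by continuity to a map $\cB': \overline{\Ham}(\C P^n, \om_{FS}) \to \overline{\barc}'.$

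The main obstacle, if any, is the group-theoretic well-definedness of $\cB',$ that is, independence of $\cB'(H)$ from the choice of Hamiltonian generating $\phi^1_H.$ This reduces, via \cite[Corollary 6]{KS-bounds}, to analysing how the Floer barcode changes under concatenation with contractible loops of Hamiltonians, which in turn uses the quantum-algebraic input on $QH(\C P^n)$ already assembled in the paper. Once this is in place, the remaining steps are a formal consequence of $C^0$-continuity of $\gamma.$
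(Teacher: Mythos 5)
Your overall structure matches the paper's one-line argument exactly: use \cite[Corollary 6]{KS-bounds} for well-definedness of $\cB'$ on $\Ham$, apply the barcode-Lipschitz-in-$\gamma$ bound, combine with Theorem~\ref{thm:gamma is C^0 continuous on CP^n}, and extend to completions by uniform continuity. However, there is a genuine error in your justification of the intermediate estimate
\[ d'_{\mrm{bottle}}(\cB'(\phi), \cB'(\psi)) \leq \gamma(\phi\psi^{-1}). \]
You claim this is ``obtained by minimising the $L^{1,\infty}$ Hofer length over Hamiltonians generating $\phi\psi^{-1}$ and comparing it to the definition of $\gamma$.'' Minimizing the $L^{1,\infty}$ length yields only the Hofer-Lipschitz bound \eqref{eq: barc' Hofer Lip}, namely $d'_{\mrm{bottle}}(\cB'(\phi),\cB'(\psi)) \leq d_{\mrm{Hofer}}(\phi,\psi)$, and the comparison to $\gamma$ goes the \emph{wrong way}: one always has $\gamma \leq d_{\mrm{Hofer}}$, not the reverse. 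So your proposed derivation would require $d_{\mrm{Hofer}} \leq \gamma$, which is false in general; it gives no bound on $d'_{\mrm{bottle}}$ in terms of $\gamma$.

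The inequality you want is a genuinely stronger result, stated in the paper as \eqref{eq: barc' spectral Lip}, $d'_{\mrm{bottle}}(\cB'(\phi),\cB'(\psi)) \leq \tfrac{1}{2}\gamma(\phi,\psi)$. This was proven by the method of filtered continuation elements from \cite{ShelukhinHZ}, subsequently extended in \cite{KS-bounds}; it does not reduce to the $L^{1,\infty}$-Lipschitz property from \cite{PolShe}. If you instead invoke \eqref{eq: barc' spectral Lip} as a black box (which is what the paper does), the rest of your argument goes through verbatim. So the gap is a misattributed and incorrect derivation of the key Lipschitz estimate, not a structural flaw in the proof.
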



In fact, one may take coefficients in $\Lambda_{M,\tmon}$ with quantum variable of degree $(-2),$ in which case the same statement holds for $\cB'(\phi)$ being the image in $\barc'$ of either the index $0$ or the index $1$ barcode of $H \in \cH$. 

In the case of surfaces, a similar statement was proven in \cite{LSV-conj} using different tools, while the same statement was proven in \cite[Remark 8]{KS-bounds} using \cite[Corollary 6]{KS-bounds}. It was also shown in \cite{BHS-spectrum} for closed symplectically aspherical manifolds, using \cite[Corollary 6]{KS-bounds}.

Following \cite{LSV-conj}, we use Corollary \ref{cor:barcodes extend}, and the conjugation invariance property \eqref{eq:conj invariance barcodes} of $\cB',$ to establish that $\cB':\overline{\Ham}(\CP) \to \overline{\barc}'$ is constant on {\em weak conjugacy classes}. Two elements $\phi_0,\phi_1$ of a topological group $G$ are called {\em weakly conjugate} if $\theta(\phi_0) = \theta(\phi_1)$ for all continuous conjugacy-invariant maps $\theta: G \to Y,$ to a Hausdorff topological space $Y.$ This is an equivalence relation, that was studied in ergodic theory and dynamical systems, see \cite{GlasnerWeiss1,GlasnerWeiss2,MannWolff} and references therein. We consider this notion for $G = \overline{\Ham}(\CP).$

It is important to remark the following. For an element $g$ of a topological group $G,$ denote by $\mrm{Conj}(g)$ the conjugacy class of $g$ in $G.$ Then $\phi_0,\phi_1$ are weakly conjugate given that there exist $h_0,\ldots,h_{N+1} \in G$ with $h_0 = \phi_0,$ and $h_N = \phi_1,$ with $\overline{\mrm{Conj}}(h_j) \cap \overline{\mrm{Conj}}(h_{j+1}) \neq \emptyset$ for all $0 \leq j \leq N,$ the closures of the conjugacy classes being taken in $G.$ In particular if $\phi_1$ lies in the closure $\overline{\mrm{Conj}}(\phi_0)$ then $\phi_0,\phi_1$ are weakly conjugate. We refer to \cite{LSV-conj} for further discussion of this notion.
 
%

\begin{cor}\label{cor:weak conj}
The barcode $\cB'(\phi) \in \overline{\barc}'$ for $\phi \in \overline{\Ham}(\CP)$ is a weak conjugacy invariant.
\end{cor}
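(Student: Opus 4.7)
The plan is to verify the three conditions that force $\cB'$ to be constant on weak conjugacy classes of $G = \overline{\Ham}(\CP)$: continuity, conjugation invariance on the whole $G$, and Hausdorffness of the target. Continuity is exactly the content of Corollary \ref{cor:barcodes extend}. Hausdorffness of $\overline{\barc}'$ is immediate, since by construction it is the metric completion of $(\barc', d'_{\mrm{bottle}})$ and hence itself a metric space. So the genuine work is to promote the conjugation invariance \eqref{eq:conj invariance barcodes}, known on the smooth subgroup $\Ham(\CP)$, to its $C^0$-completion $\overline{\Ham}(\CP)$.

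For the promotion step I would fix $\phi, g \in \overline{\Ham}(\CP)$ and approximate simultaneously by smooth elements $\phi_n, g_n \in \Ham(\CP)$ with $d_{C^0}(\phi_n, \phi) \to 0$ and $d_{C^0}(g_n, g) \to 0$. Using that $\overline{\Ham}(\CP)$ is a topological group in the $C^0$-topology, in particular that composition and inversion are jointly $C^0$-continuous, one gets $g_n \phi_n g_n^{-1} \to g \phi g^{-1}$ in $C^0$. Applying Corollary \ref{cor:barcodes extend} to both sides of the smooth identity $\cB'(g_n \phi_n g_n^{-1}) = \cB'(\phi_n)$ from \eqref{eq:conj invariance barcodes} and passing to the limit in $(\overline{\barc}', \overline{d}'_{\mrm{bottle}})$ yields
\[ \cB'(g \phi g^{-1}) = \lim_{n \to \infty} \cB'(g_n \phi_n g_n^{-1}) = \lim_{n \to \infty} \cB'(\phi_n) = \cB'(\phi), \]
which is the desired conjugation invariance on all of $\overline{\Ham}(\CP)$.

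With the three conditions in hand, the corollary is a direct unwinding of the definition of weak conjugacy: any continuous conjugation-invariant map from $G$ to a Hausdorff space $Y$ is by definition constant on weak conjugacy classes, so taking $\theta = \cB'$ and $Y = \overline{\barc}'$ finishes the proof. In particular, the stronger chain-of-closures criterion recalled before the statement is subsumed, because $\cB'$ is continuous and therefore sends each $\overline{\mrm{Conj}}(h_j)$ into a single point of $\overline{\barc}'$.

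The main obstacle I anticipate is the joint $C^0$-continuity of $(g, \phi) \mapsto g\phi g^{-1}$ on $\overline{\Ham}(\CP)$, i.e. the fact that this completion is a topological group rather than merely a topological space. This is a standard property of Hameomorphism groups on closed manifolds, but it is the indispensable ingredient that allows the smooth invariance \eqref{eq:conj invariance barcodes} to pass to the $C^0$-closure; everything else in the argument is formal once Corollary \ref{cor:barcodes extend} and the conjugation-invariance property \eqref{eq:conj invariance barcodes} are in place.
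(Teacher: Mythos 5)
Your proof is correct and takes essentially the same approach as the paper: verify that $\cB'$ is a continuous, conjugation-invariant map to a Hausdorff target, which is precisely the package of properties making it a weak conjugacy invariant by definition. The paper invokes Corollary \ref{cor:barcodes extend} for continuity and \eqref{eq:conj invariance barcodes} for conjugation invariance, delegating to \cite{LSV-conj} the remaining details; the extension of conjugation invariance from $\Ham$ to the $C^0$-closure via approximation and the topological-group structure of $\mathrm{Homeo}(M)$ (hence of $\overline{\Ham}$) in the uniform metric on a compact manifold is exactly the step you spell out, and it is indeed standard general topology rather than anything specific to Hamiltonian homeomorphisms.
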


Now, a small variation on \cite[Proposition 55, Remark 62]{LSV-conj} yields the following. Denote by $r(x,\phi) = \dim_{\bK} HF_{\mrm{loc}}(\phi,x)$ the dimension of the local Floer homology of $\phi$ at a contractible fixed point $x \in \mrm{Fix}_c(\phi)$ (see \cite{Ginzburg-CC}).

\begin{thm}[\cite{LSV-conj}]\label{thm:LSV}
Let the fixed points $\mrm{Fix}_c(\phi)$ of $\phi \in \Ham(\CP)$ in the contractible class be a finite set. Then the barcode $\cB'(\phi)$ consists of a finite number of bars, and the number of endpoints of these bars equals \[\sum_{x \in \mrm{Fix}_c(\phi)} r(x,\phi).\]
\end{thm}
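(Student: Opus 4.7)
\begin{pf}
The plan is to approximate $\phi$ by non-degenerate Hamiltonian diffeomorphisms and pass to the bottleneck limit via the $C^0$-continuity of the barcode map established in Corollary \ref{cor:barcodes extend}. First, using the finiteness of $\mrm{Fix}_c(\phi)$, I would choose pairwise disjoint open isolating neighborhoods $U_x \subset \C P^n$ of the contractible fixed points of $\phi$, and construct a sequence of $C^2$-small Hamiltonian perturbations $\phi_k = \phi \circ \phi^1_{F_k}$, with $F_k$ supported in $\bigsqcup_x U_x$, chosen so that every contractible fixed point of $\phi_k$ is non-degenerate, contained in $\bigsqcup_x U_x$, and $\phi_k \to \phi$ in $C^0$. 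Corollary \ref{cor:barcodes extend} then yields $\cB'(\phi_k) \to \cB'(\phi)$ in $d'_{\mrm{bottle}}$.

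For each $\phi_k$, the barcode $\cB'(\phi_k)$ comes from a finite-dimensional action-filtered Floer complex with exactly one generator per fixed point, so its total number of endpoints equals $\#\mrm{Fix}_c(\phi_k) = \sum_x m_x(k)$, where $m_x(k) = \#(\mrm{Fix}_c(\phi_k) \cap U_x)$. Next I would invoke the local Floer homology theory of Ginzburg \cite{Ginzburg-CC}: for $k$ sufficiently large, the local subcomplex at $x$, generated by fixed points of $\phi_k$ in $U_x$ with differential counting Floer strips confined to $U_x$, has homology of dimension exactly $r(x,\phi)$. Therefore $m_x(k) - r(x,\phi)$ of the local generators cancel in pairs via this local differential, producing short bars whose pairs of endpoints both lie in a cluster of action values collapsing to $\cA_\phi(x)$ with spread going to zero as $k \to \infty$.

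Combining these observations, the remaining $r(x,\phi)$ endpoints per cluster persist as bars whose endpoints limit to values in the (finite) action spectrum of $\phi$. In the bottleneck limit, the cancelling short intra-cluster bars shrink to length zero and disappear, while the surviving bars contribute a total of $\sum_{x \in \mrm{Fix}_c(\phi)} r(x,\phi)$ endpoints for $\cB'(\phi)$, which is the claimed count.

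The hard part will be the precise justification of the "intra-cluster collapses, inter-cluster survives" decomposition: one must argue that in bottleneck distance, $\cB'(\phi_k)$ splits into a piece supported on clusters of vanishing diameter, accounting for the $m_x(k) - r(x,\phi)$ cancelling pairs per cluster, plus a globally defined piece with exactly $\sum_x r(x,\phi)$ endpoints that is stable under the bottleneck limit. This analysis is carried out in \cite[Proposition 55, Remark 62]{LSV-conj}; the only adaptation needed in our setting is to substitute the $C^0$-continuity input used there (available in the surface and symplectically aspherical cases) by Corollary \ref{cor:barcodes extend} of the present paper, which is the single ingredient specific to the target $\C P^n$.
\end{pf}
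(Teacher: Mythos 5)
The paper's proof of Theorem~\ref{thm:LSV} is purely a barcode-combinatorial argument. It takes the analytical content of \cite{LSV-conj} (the identification of the endpoint count with $\sum_x r(x,\phi)$ for degenerate $\phi$) as a black box, and the only thing it actually proves is the \emph{coefficient reconciliation}: that the number of endpoints of $\cB_0(H)$ over $\Lambda_{\tmon}$ agrees with the count of endpoints in the action window $[0,A_M)$ of $\cB_{\Z}(H) = \sqcup_r \cB_r(H)$ over $\Lambda_{M,\tmin}$, since \cite{LSV-conj} works over surfaces (where $N_M=1$ and $\Lambda_{M,\tmin}=\Lambda_{\tmon}$) and the two conventions coincide there. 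The argument is a small exercise with the $\Z$-action on the multi-set $\cB_{\Z}(H)$ given by $q$-multiplication, and the observation that $\cB_0(H) = \sqcup_{0 \le k < 2N_M} \cB_k(H;\Lambda_{M,\tmin})[-k\kappa]$. Your proposal does not touch this reconciliation at all; you attempt instead to reprove the analytical heart of \cite{LSV-conj} (the perturbation/local-Floer/cluster analysis), which the paper explicitly defers.

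Beyond taking a completely different route, your proposal also misidentifies what input from the $\CP$ setting is actually needed. You substitute Corollary~\ref{cor:barcodes extend} ($C^0$-continuity of $\cB'$) for what you call ``the $C^0$-continuity input used there.'' But the perturbation $\phi_k = \phi\circ\phi^1_{F_k}$ with $F_k\to 0$ in $C^2$ gives $d_{\mrm{Hofer}}(\phi_k,\phi)\to 0$, so only the Hofer--Lipschitz estimate \eqref{eq: barc' Hofer Lip} --- a standard fact available on $\CP$ with no new input --- is needed to justify $\cB'(\phi_k)\to\cB'(\phi)$ in bottleneck distance. The $C^0$-continuity statement of Corollary~\ref{cor:barcodes extend} is overkill here and is not the ingredient that was ``missing'' for $\CP$; it is needed for the extension to homeomorphisms in Corollaries~\ref{cor:barcodes extend} and~\ref{cor:non-smooth}, but not for Theorem~\ref{thm:LSV}, which concerns smooth $\phi$ with finite contractible fixed-point set. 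The actual gap between \cite{LSV-conj} and the present setting is the change of Novikov coefficients, which your argument leaves unaddressed, and the ``hard part'' you flag (separating intra-cluster short bars from the persistent ones) is exactly the analytical content that the paper does not reprove but cites. So as a replacement for the paper's proof, the proposal is missing its central step and invokes the wrong continuity result.
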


This result together with Corollary \ref{cor:weak conj} implies that $\overline{\Ham}(\CP)$ does not possess a dense conjugacy class, that is - it is not a Rokhlin group. The same consequence for surfaces of higher genus was known by \cite{GambGhys-comm,Ghys-ICM}. The case of the two-torus, as well as that of $(\D^{2n},\om_{std}),$ was settled in \cite{EntPolPy} (the former case building on \cite{Py-torus}, see also \cite{BKS-torus}), and that of the sphere in \cite{SeyfaddiniC0Limits}, while \cite{BHS-spectrum} shows it for closed symplectically aspherical manifolds.

Finally, observing that if $\phi \in \Ham(M,\om)$ is smooth, then the set of endpoints of each representative of $\cB'(\phi)$ in $\barc$ is bounded, implies by Corollary \ref{cor:barcodes extend}, via the example from \cite[Section 6]{LSV-conj}, the following statement.


\begin{cor}\label{cor:non-smooth}
There exists a homeomorphism $\phi \in \overline{\Ham}(\CP)$ that is not weakly conjugate to any diffeomorphism in $\Ham(\CP).$
\end{cor}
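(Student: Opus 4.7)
The plan is to leverage Corollary \ref{cor:barcodes extend} together with Corollary \ref{cor:weak conj} to reduce the statement to a barcode-level obstruction, then realize this obstruction by an explicit $C^0$-limit construction analogous to the one in \cite[Section 6]{LSV-conj}. More precisely, I would exhibit a Hamiltonian homeomorphism $\phi \in \overline{\Ham}(\CP)$ such that every representative of $\cB'(\phi) \in \overline{\barc}'$ has an unbounded set of endpoints, and then argue that no smooth $\psi \in \Ham(\CP)$ can share this property.

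First I would verify the smooth side. For a smooth $\psi = \phi^1_H \in \Ham(\CP)$, up to a $C^\infty$-small perturbation of $H$ one may assume that all contractible fixed points are non-degenerate, so $\mrm{Fix}_c(\psi)$ is a finite set. By Theorem \ref{thm:LSV}, the index-$0$ barcode $\cB'(\psi)$ is then represented in $\barc$ by finitely many bars, and a fundamental domain for the $\Lambda_{\tmon}$-shift action contains only finitely many endpoints; hence some representative of $\cB'(\psi) \in \barc'$ has a bounded endpoint set. Since the notion "bounded endpoint set in some representative'' is invariant under the shift $\R$-action and passes to the quotient $\barc'$, it yields a well-defined property on elements of $\overline{\barc}'$ that is shared by every smooth Hamiltonian diffeomorphism.

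Next I would carry out the construction of $\phi$. Following \cite[Section 6]{LSV-conj}, one builds a sequence of smooth Hamiltonians $H_k \in \cH$ whose time-one maps $\psi_k = \phi^1_{H_k}$ converge in the $C^0$ metric $d_{C^0}$ on $\Ham(\CP)$ to a Hamiltonian homeomorphism $\phi \in \overline{\Ham}(\CP)$, while the number of bars in the barcode $\cB'(\psi_k)$ (within a fundamental domain) grows without bound and the endpoints of those bars escape to infinity. By Corollary \ref{cor:barcodes extend}, $\cB'(\phi) = \lim_k \cB'(\psi_k)$ in $\overline{\barc}'$, and this limiting barcode has the property that every representative in $\barc$ contains unboundedly many endpoints; in particular no representative of $\cB'(\phi)$ has a bounded endpoint set. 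By Corollary \ref{cor:weak conj}, $\cB'$ is a weak conjugacy invariant on $\overline{\Ham}(\CP)$, so if $\phi$ were weakly conjugate to some smooth $\psi \in \Ham(\CP)$, we would have $\cB'(\phi) = \cB'(\psi)$ in $\overline{\barc}'$, contradicting the dichotomy established in the previous paragraph.

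The main obstacle I anticipate is the explicit construction of the sequence $\psi_k$: one must ensure simultaneously that (i) $\psi_k$ converges in $d_{C^0}$, so that the limit lies in $\overline{\Ham}(\CP)$, and (ii) the endpoints of the bars of $\cB'(\psi_k)$ truly escape and are not cancelled in the limit. Since $C^0$-convergence allows very large Hofer-distance behavior, one has the flexibility to spread spectral invariants across wider ranges while keeping the maps $C^0$-close; the construction from \cite[Section 6]{LSV-conj}, originally performed on a surface, should adapt to $(\CP, \om_{FS})$ by localizing the perturbations inside a Darboux ball, so that the relevant local Floer data—and hence the relevant bar endpoints—are insensitive to the ambient geometry of $\CP$. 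Once this localized model is in place, the rest of the argument is formal.
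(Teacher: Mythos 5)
Your overall plan coincides with the paper's: establish that the bounded-endpoints property of $\cB'$ distinguishes smooth diffeomorphisms from a suitable limit homeomorphism, construct that homeomorphism via the example of \cite[Section~6]{LSV-conj}, and conclude by the weak conjugacy invariance from Corollary~\ref{cor:weak conj}. The intended construction of $\phi$ is exactly what the paper invokes, and your use of Corollaries~\ref{cor:barcodes extend} and~\ref{cor:weak conj} is correct.

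However, your argument for the smooth side has a genuine gap. You perturb $H$ so that $\phi^1_H$ has a finite contractible fixed-point set, and then apply Theorem~\ref{thm:LSV} to the \emph{perturbed} map. But that only gives information about the perturbed $\psi'$, not the original $\psi$; a bottleneck-small perturbation of a barcode can move or add bars, so finiteness and boundedness of the endpoint set do not pass back across the perturbation by continuity alone. The deduction ``shared by every smooth Hamiltonian diffeomorphism'' therefore does not follow from what you wrote. The paper's observation is in fact more elementary and bypasses Theorem~\ref{thm:LSV} entirely: for any smooth $\psi\in\Ham(\C P^n,\om_{FS})$, the Floer persistence module $V_0(H)$ lies in $\pemod$, so by the normal form theorem its barcode $\cB_0(H)$ is a \emph{finite} multiset of intervals, and any representative of $\cB'(\psi)\in\barc'$ in $\barc$ is obtained from it by a global $\R$-shift; hence its endpoint set is a finite, and a fortiori bounded, subset of $\R$. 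No perturbation, genericity, or fixed-point counting is needed. Relatedly, the phrase ``a fundamental domain for the $\Lambda_{\tmon}$-shift action'' conflates two different structures: $\barc'$ is the quotient of $\barc$ by the isometric $\R$-action translating all bars simultaneously, not by the action of the quantum variable $t$ on the Floer complex. Once you replace the LSV/perturbation detour with the direct finiteness observation, the rest of your argument is sound and matches the paper's.
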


\begin{rmk}
	It was recently proven in \cite[Corollary 5.2]{GG-pseudorotations} that for each pseudo-rotation $\phi \in \Ham(\CP),$ that is a Hamiltonian diffeomorphism with precisely $n+1$ periodic points of all positive integer periods, there exists an increasing integer sequence $k_j$ such that $\gamma(\phi^{k_j}) \xrightarrow{j \to \infty} 0.$ Moreover, under a certain strong irrationality assumption on the vector $\Delta \in (\R/\Z)^{n+1}$ given by the mean indices of the periodic points, it is shown in \cite[Theorem 1.4]{GG-pseudorotations} that there is a sequence $k_j$ such that $\phi^{k_j} \xrightarrow{j \to \infty} \id$ in $C^0$ topology. By Theorem \ref{thm:gamma is C^0 continuous on CP^n}, the latter $C^0$ result, whenever it holds, implies the former result on $\gamma.$
\end{rmk}

\begin{rmk}
The argument used for the proofs of Theorems \ref{thm:Viterbo-sharper}, \ref{thm:gamma is C^0 continuous on CP^n}, could yield further $C^0$-continuity results, both absolute and relative (see Theorem \ref{thm:C^0 rel} below) from each new case of Conjecture \ref{conj:Viterbo-generalized}.
\end{rmk}

\subsubsection{Quasimorphisms on the Hamiltonian group of cotangent disk bundles}
The second application of Theorem \ref{thm:Viterbo} is to the symplectic topology of unit cotangent disk bundles $D^*_g L,$ $L \in \cl V.$ These applications were anticipated in \cite{MonznerVicheryZapolsky}. We start with the notion of a quasi-morphism on a group and refer to \cite{scl} for further exposition. 

\begin{df}
	A quasimorphism $\sigma:G \to \R$ on a group $G$ is a function satisfying the bound \[D_{\sigma} = \sup_{x,y \in G}|\sigma(xy) - \sigma(x) - \sigma(y)| < \infty.\]
\end{df}

The number $D_{\sigma}$ is called the {\em defect} of the quasimorphism. If $D_{\sigma} = 0,$ then the quasimorphism is called {\em trivial}: in this case it is in fact a homomorphism $G \to \R.$ For each quasimorphism $\sigma$ there exists a unique {\em homogeneous}, that is additive on each abelian subgroup of $G,$ quasimorphism $\overline{\sigma},$ such that $\sigma - \overline{\sigma}$ is a bounded function. This homogeneization is given by the formula \[\overline{\sigma}(x) = \lim_{k \to \infty} \frac{\sigma(x^k)}{k}.\]  

Quasi-morphisms on the (universal cover of) the Hamiltonian group of closed symplectic manifolds were constructed in \cite{EntovPolterovichCalabiQM} and many subsequent works (we refer to \cite{Entov-ICM} for a review of the literature). However, not many examples are known in the case of open symplectic manifolds \cite{Lanzat-qm,BEP-ball}. One can construct such quasimorphisms by pulling them back by a conformally symplectic embedding $U \to M$ of an open symplectic manifold $U$ into a closed symplectic manifold $M.$ In our case, endowing $L \in \cl V$ with the standard Zoll metric, we observe that each $\epsilon \in (0,1]$ gives a symplectic embedding $\iota_{\epsilon}:D^*_{\epsilon} L \to M\setminus \Sigma,$ of  $D^*_{\epsilon} L = \epsilon \cdot D^*L$ into the respective $M \in \cl W,$ in the complement of $\Sigma.$ Now, as shown in \cite{KS-bounds}, there exists a non-trivial homogeneous Calabi quasimorphism $\sigma:\til{\Ham}(M) \to \R$ on the universal cover of $\Ham(M,\om),$ enjoying the following additional property. If $H \in \cH_M$ is a Hamiltonian with zero mean, with $H_t|_L \equiv c(t)$ for $t \in [0,1],$ then $\sigma([H]) =\int_{0}^{1} c(t) \, dt,$ for the class $[H] \in \til{\Ham}(M,\om)$ generated by the Hamiltonian path $\{\phi^t_H\}_{t \in [0,1]}.$ Now, via $\iota_{\epsilon}$ we obtain a natural homomorphism $i_{\epsilon}:\til{\Ham}_c(D^*_{\eps} L) \to \til{\Ham}_c(M),$ giving a homogeneous quasi-morphism $i_{\eps}^*\sigma = \sigma \circ i_{\eps}$ on $\til{\Ham}_c(D^*_{\eps} L).$  Looking at Hamiltonians $H \in C^{\infty}_c(D^*L,\R)$ with zero mean, and constant on $L$ it is easy to check that $\mu$ is non-trivial. However, it was hitherto unknown whether quasimorphisms $\til{\Ham}_c(D^*L) \to \R$ can be constructed intrinsically from the symplectic geometry of $D^*L$ itself. We resolve this question below for $L \in \cl V.$ Consider the invariant \[\mu: \til{\Ham}_c(D^* L) \to \R\]  \[\mu([H]) = \lim_{k \to \infty} \frac{1}{k} c([L], [H]^{k}).\] Define $\zeta: C^{\infty}_c (D^*L, \R) \to \R$ by $\zeta(H) = \mu([H]),$ the spectral invariants being computed inside $D^*L.$ These maps were defined, and shown to enjoy various properties in \cite[Theorems  1.3 and 1.8, Propositions 1.4 and 1.9]{MonznerVicheryZapolsky}. In particular, $\mu([H])$ depends only on $\phi^1_H,$ and defines a map $\mu: {\Ham}_c(D^*_g L) \to \R.$ We prove, via Theorem \ref{thm:Viterbo}, the following new properties of these maps.


\begin{cor}\label{cor:qm intr}
The map $\mu: \Ham_c(D^* L) \to \R$ is a non-zero homogeneous quasimorphism. Moreover $\mu$ vanishes on each element $\phi \in \Ham_c(D^* L)$ such that $\supp(\phi)$ is displaceable. For $F,G \in C^{\infty}_c (D^*L, \R),$ the map $\zeta$ satisfies \begin{equation}\label{eq:zeta pb}|\zeta(F+G) - \zeta(F) - \zeta(G)| \leq \sqrt{2 C(g,L) ||\{F,G\}||_{C^0}},\end{equation} where $\{F,G\}$ is the Poisson bracket of $F,G.$ In particular, whenever $\{F,G\} = 0,$ we obtain \[\zeta(F+G) = \zeta(F) + \zeta(G).\]
\end{cor}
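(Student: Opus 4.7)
The plan is to reduce each of the four claims to a consequence of Theorem \ref{thm:Viterbo}. First, I will establish that $\mu$ is a homogeneous quasimorphism of defect at most $C(g,L)$. The starting point is the standard triangle inequality for Lagrangian spectral invariants,
\begin{equation*}
c([L], [F][G]) \leq c([L], [F]) + c([L], [G]),
\end{equation*}
which follows from $1_L \cdot 1_L = 1_L$ in $QH(L)$. For the reverse inequality, I will insert $[G]^{-1}[G] = \id$ inside $c([L], [F])$ and invoke the Poincar\'e duality relation $c([L], [H]^{-1}) = -c([\mrm{pt}], [H])$ for the dual class $[\mrm{pt}] = a^{n} \in QH(L)$, which is well-defined thanks to the algebraic structure $QH(L) \cong \Lambda_{L,\tmin}[a]/(a^{n+1}=q)$ recalled before Theorem \ref{thm:Viterbo-sharper}. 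This gives $c([L], [F]) \leq c([L], [F][G]) - c([\mrm{pt}], [G])$, which rearranges to
\[
c([L], [F]) + c([L], [G]) - c([L], [F][G]) \leq c([L], [G]) - c([\mrm{pt}], [G]) = \gamma(L, \phi^{1}_{G}(L)),
\]
and is bounded by $C(g,L)$ by Theorem \ref{thm:Viterbo}. Fekete's lemma then produces the limit $\mu([H]) = \lim_k c([L], [H]^k)/k$ as a homogeneous quasimorphism of defect at most $C(g,L)$; its descent to a map $\Ham_c(D^*L) \to \R$ and the evaluation $\mu([H]) = \int_0^1 c(t)\,dt$ whenever $H_t|_L \equiv c(t)$ (which furnishes non-triviality) are \cite[Theorems 1.3 and 1.8]{MonznerVicheryZapolsky}.

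For vanishing on displaceable supports, I will use that if $U \supset \supp(\phi)$ is displaceable with Hofer energy $e(U)$, then $\supp(\phi^k) \subset U$ for all $k$, and the energy-capacity inequality (cf.\ \cite{Usher-sharp}) yields a uniform bound $|c([L], \phi^k)| \leq 2\,e(U)$; dividing by $k$ and passing to the limit produces $\mu(\phi) = 0$.

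For the Poisson bracket inequality I will follow Polterovich's classical method (as in \cite{EntovPolterovichCalabiQM}). For each integer $M \geq 1$, the quasimorphism property applied to the $2M$-fold alternating product, together with homogeneity $\mu(\phi^{1/M}_{F}) = \zeta(F)/M$, gives
\begin{equation*}
\bigl| \mu\bigl((\phi^{1/M}_{F} \phi^{1/M}_{G})^M\bigr) - \zeta(F) - \zeta(G) \bigr| \leq (2M - 1)\, C(g,L),
\end{equation*}
while a Trotter-type Hofer estimate $d_{\mrm{Hofer}}\bigl((\phi^{1/M}_{F} \phi^{1/M}_{G})^M,\, \phi^{1}_{F+G}\bigr) = O\bigl(\|\{F,G\}\|_{C^0}/M\bigr)$, combined with the Hofer Lipschitz continuity of $\mu$, bounds the difference $|\mu((\phi^{1/M}_{F} \phi^{1/M}_{G})^M) - \zeta(F+G)|$ by the same quantity. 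Optimizing the sum of the two bounds over $M$ gives \eqref{eq:zeta pb}; when $\{F,G\} = 0$, sending $M \to \infty$ produces strict additivity.

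The main obstacle will be justifying the Poincar\'e duality identity $c([L], [H]^{-1}) = -c([\mrm{pt}], [H])$ for Lagrangian spectral invariants in the present closed-monotone setting: this is where the specific algebraic structure of $QH(L)$ for $L \in \cl V$ is needed alongside Theorem \ref{thm:Viterbo}. Once that identity is in place, the remaining steps are routine applications of the Entov-Polterovich quasimorphism formalism.
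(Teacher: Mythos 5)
Your reduction of the quasimorphism property to Theorem~\ref{thm:Viterbo} is in the right spirit and lands in the same place as the paper, but the detour through Poincar\'e duality is unnecessary and you have misidentified it as the bottleneck. The paper simply applies the triangle inequality twice,
\[c([L],F\#G) \leq c([L],F)+c([L],G),\qquad c([L],F) \leq c([L],F\#G)+c([L],\overline{G}),\]
and then uses the \emph{definition} $\gamma(\LH)=c([L],H)+c([L],\overline{H})$ (Section~\ref{subsubsec:spec norm}) to rewrite $c([L],F)+c([L],G)-c([L],F\#G)\leq c([L],G)+c([L],\overline{G})=\gamma(L,\phi^1_G(L))\leq C(g,L)$. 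Your identity $c([L],[G]^{-1})=-c([pt],[G])$ is a Poincar\'e-duality fact that you do not need and would have to justify separately. For the vanishing on displaceable supports, your energy-capacity sketch is plausible, but the paper simply notes that this was already established in \cite[Theorem 1.3]{MonznerVicheryZapolsky} and requires no input from Theorem~\ref{thm:Viterbo}.

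The sketch of the Poisson bracket inequality \eqref{eq:zeta pb}, however, contains a genuine gap. You bound
\[\bigl| \mu\bigl((\phi^{1/M}_{F} \phi^{1/M}_{G})^M\bigr) - \zeta(F) - \zeta(G) \bigr| \leq (2M-1)\,C(g,L),\]
which \emph{grows linearly in $M$}; adding a term $O(\|\{F,G\}\|_{C^0}/M)$ and minimizing over $M\geq 1$ therefore yields a lower bound that never drops below $C(g,L)$ and cannot possibly produce the desired estimate $\sqrt{2C(g,L)\,\|\{F,G\}\|_{C^0}}$, which tends to zero as $\{F,G\}\to 0$. The Trotter subdivision goes in the wrong direction. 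The argument of \cite[Theorem 1.4]{EntPolZap-poisson} (which the paper cites) instead uses homogeneity to write $\zeta(F)+\zeta(G)-\zeta(F+G)=\tfrac{1}{k}\bigl[\mu(\phi^k_F)+\mu(\phi^k_G)-\mu(\phi^k_{F+G})\bigr]$; the quasimorphism defect then contributes $O(C(g,L)/k)$, the Hofer--Lipschitz comparison of $\phi^k_F\phi^k_G$ with $\phi^k_{F+G}$ contributes $O(k\,\|\{F,G\}\|_{C^0})$, and it is the optimization of $C(g,L)/k + \tfrac{k}{2}\|\{F,G\}\|_{C^0}$ over $k$ that yields the square root. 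You need to replace your $1/M$-subdivided products with $k$-fold powers for this step to close.
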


\section*{Acknowledgements}
I thank Peter Albers, Paul Biran, Octav Cornea, Asaf Kislev, Leonid Polterovich, Vuka\v{s}in Stojisavljevi\'{c}, Dmitry Tonkonog, Renato Vianna, and Frol Zapolsky for fruitful collaborations during which I learnt many of the tools that I apply in this paper. I thank Sobhan Seyfaddini and Georgios Dimitroglou Rizell for useful conversations. This work was initiated and was partially carried out during my stay at the Institute for Advanced Study, where I was supported by NSF grant No. DMS-1128155. It was partially written during visits to Tel Aviv University, and to Ruhr-Universit\"{a}t Bochum. I thank these institutions and Helmut Hofer, Leonid Polterovich, and Alberto Abbondandolo, for their warm hospitality. At the University of Montr\'{e}al, I am supported by an NSERC Discovery Grant and by the Fonds de recherche du Qu\'{e}bec - Nature et technologies.


\section{Preliminary notions}
%
%
%

We briefly describe the pertinent part of the standard package of filtered Floer homology in the context of monotone symplectic manifolds, and their monotone Lagragnian submanifolds. We refer to \cite{OhBook,LeclercqZapolsky,KS-bounds} for more details and a review of the literature. However, we emphasize two points. Firstly, in Section \ref{subsec:relations} we describe how filtered relative Hamiltonian Floer homology of a Hamiltonian $H \in \sm{[0,1] \times M, \R}$ and a Lagrangian $L \subset M$ is isomorphic to the filtered Lagrangian Floer homology of the pair of Lagrangian submanifolds $L$ and $L' = (\phi^1_H)^{-1} L,$ with appropriate choices of additional data called anchors \cite[Chapter 14]{OhBook}. Second, in Section \ref{subsec: Floer persistence} we recall and describe a few ways to determine the collection of bar-lengths of the barcodes of persistence modules associated to filtered Floer homology.

\subsection{Filtered Floer homology.}\label{sec:filtered Floer}

All Lagrangian submanifolds $L \subset M$ we consider in this paper shall be weakly {\em homologically monotone} that is the class $\om_L \in H^2(M,L;\R)$ of the symplectic form in cohomology relative to $L$ and the Maslov class $\mu_L \in H^2(M,L;\R)$ are positively proportional \[\om_L = \kappa\cdot \mu_L\] for $\kappa = \kappa_L > 0.$ Moreover, when $M$ is closed, we require that $\ima(\mu_L) = N_L \cdot \Z \subset \Z$ for an integer $N_L \geq 2,$ called the minimal Maslov number of $L$ in $M.$ In this case $M$ will be weakly homologically monotone, that is $[\om] = 2\kappa \cdot c_1(M,\om)$ in $H^2(M;\R).$ When $M$ is not closed, we require it be exact, that is $\om = d\lambda$ for a one-form $\lambda,$ and to have symplectically convex boundary. This means that the vector field $V$ on $M$ defined by $\iota_V \omega = \lambda$ is transverse to $\partial M$ and points outwards at $\partial M.$ In this open case, we shall consider {\em exact} Lagrangian submanifolds, that is $\lambda|_L = df_L,$ for $f_L \in \sm{L,\R}.$ We denote by $\cH \subset C^{\infty}([0,1] \times M,\R)$ the space of time-dependent Hamiltonians on $M,$ where in the closed case $H_t(-) = H(t,-)$ is normalized to have zero mean with respect to $\om^n,$ and in the non-closed case, it is normalized to vanish near the boundary. The time-one maps of isotopies $\{\phi^t_H\}_{t \in [0,1]}$ generated by time-dependent vector fields $X^t_H, \iota_{X^t_H} \omega = - d(H_t),$ are called Hamiltonian diffeomorphisms and form the group $\Ham(M,\om).$ For $H \in \cH$ we call $\overline{H},\til{H} \in \cH$ the Hamiltonians $\overline{H}(t,x) = - H(t,\phi^1_H x),$ $\til{H}(t,x) = - H(1-t,x).$ For $t \in [0,1]$ we have $\phi^t_{\overline{H}} = (\phi^t_H)^{-1},$ while the isotopy $\{\phi^t_{\til{H}}\},$ viewed as a path in $\Ham(M,\om),$ is homotopic to $\{\phi^t_{\overline{H}}\}$ with fixed endpoints. Since homotopic Hamiltonian isotopies give naturally isomorphic graded filtered Floer complexes, we shall identify the two operations $H \mapsto \overline{H},$ and $H \mapsto \til{H}.$ In particular we will identify between $H$ and the two Hamiltonians $\displaystyle\til{\overline{H}} \in \cH,$ $\displaystyle\overline{\til{H}} \in \cH.$ Similarly, for $F,G \in \cH,$ we set $F \# G \in \cH$ to generate the flow $\{\phi^t_F \phi^t_G\}_{t \in [0,1]},$ in other words $F \# G (t,x) = F(t,x) + G(t,(\phi^t_F)^{-1} x).$ A homotopic path is generated by $F \til{\#} G (t,x) = \lambda'_1(t) G(\lambda_1(t),x) + \lambda'_2(t) F(\lambda_2(t),x)$ for surjective monotone non-decreasing reparametrizations $\lambda_1,\lambda_2:[0,1] \to [0,1],$ such that $\supp \lambda'_1 < \supp \lambda'_2.$ Finally, let $\cJ(M,\om)$ be the space of $\om$-compatible almost complex structures on $M.$

In each case below, Floer theory, first introduced by A. Floer \cite{Floer1,Floer2,Floer3}, is a way to set up Morse-Novikov homology for an action functional defined on a suitable cover of a path or a loop space determined by the geometric situation at hand. We refer to \cite{OhBook} and references therein for details on the constructions described in this subsection.

\subsubsection{Absolute Hamiltonian case.} \label{subsec:abs-Ham}


Consider $H \in \cH.$ Let $\cL_{pt} M$ be the space of contractible loops in $M.$ Let $c_M: \pi_1(\cL_{pt} M) \cong \pi_2(M) \to 2 N_M \cdot \Z,$ be the surjection given by $c_M(A) = 2 \left< c_1(M,\om), A\right>.$ Let $\til{\cL}^{\min}_{pt} M = \til{\cL_{pt}} \times_{c_M} (2 N_M \cdot \Z)$ be the cover of $\cL_{pt} M$ associated to $c_M.$ The elements of $\til{\cL}^{\min}_{pt} M$ can be considered to be equivalence classes of pairs $(x,\overline{x})$ of $x \in {\cL}_{pt} M$ and its capping $\overline{x}:\D \to M,$ $\overline{x}|_{\del \D} = x.$ The symplectic action functional \[\cA_{H}: \til{\cL}^{\min}_{pt} M \to \R \] is given by \[\cA_{H}(x,\overline{x}) = \int_0^1 H(t,x(t)) - \int_{\overline{x}} \om,\] that is well-defined by monotonicity: $[\om]= \kappa \cdot c_M.$ Assuming that $H$ is non-degenerate, that is the graph $\mrm{graph}(\phi^1_H) = \{(\phi^1_H(x),x)\,|\, x \in M\}$ intersects the diagonal $\Delta_M \subset M \times M$ transversely, the generators over the base field $\bK = \bF_2$ of the Floer complex $CF(H;J)$ are the lifts $ \til{\cO}(H)$ to $\til{\cL}^{\min}_{pt} M$ of $1$-periodic orbits $\cO(H)$ of the Hamiltonian flow $\{\phi^t_H\}_{t \in [0,1]}.$ These are the critical points of $\cA_{H},$ and we denote by $\Spec(H) = \cA(\til{\cO}(H))$ the set of its critical values. Choosing a generic time-dependent $\om$-compatible almost complex structure $\{J_t \in \cJ(M,\om)\}_{t \in [0,1]},$ and writing the asymptotic boundary value problem on maps $u:\R \times S^1 \to M$ defined by the negative formal gradient on $\cL_{pt} M$ of $\cA_{H},$ the count of isolated solutions, modulo $\R$-translation, gives a differential $d_{H;J}$ on the complex $CF(H;J),$ $d^2_{H;J} = 0.$ This complex is graded by the Conley-Zehnder index $CZ(x,\bar{x})$, with the property that the action of the generator $A = 2N_M$ of $2N_M\cdot \Z$ has the effect $CZ(x,\bar{x} \# A) = CZ(x,\bar{x}) - 2N_M.$ Its homology $HF_*(H)$ does not depend on the generic choice of $J.$ Moreover, considering generic families interpolating between different Hamiltonians $H,H',$ and writing the Floer continuation map, where the negative gradient depends on the $\R$-coordinate we obtain that $HF_*(H)$ in fact does not depend on $H$ either. While $CF_*(H,J)$ is finite-dimensional in each degree, it is worthwhile to consider its completion in the direction of decreasing action. In this case it becomes a free graded module of finite rank over the Novikov field $\Lambda_{M,\tmin} = \bK[q^{-1},q]]$ with $q$ being a variable of degree $(-2N_M).$ 

Moreover, for $a \in \R$ the subspace $CF(H,J)^a$ spanned by all generators $(x,\bar{x})$ with $\cA_{H}(x,\bar{x}) < a$ forms a subcomplex with respect to $d_{H;J},$ and its homology $HF(H)^a$ does not depend on $J.$ Arguing up to $\epsilon,$ one can show that a suitable continuation map sends $FH(H)^a$ to $FH(H')^{a + \cE_{+}(H-H')},$ for \[\cE_{+}(F) = \int_{0}^{1} \max_M(F_t)\,dt.\]
It shall also be useful to define $\cE_{-}(F) = \cE_{+}(-F),\; \cE(F) = \cE_{+}(F) + \cE_{-}(F).$ Finally, one can show that for each $a \in \R,$ $HF(H)^a$ depends only on the class $[H]$ of the path $\{\phi^t_H\}_{t \in [0,1]}$ in the universal cover $\til{\Ham}(M,\om)$ of the Hamiltonian group of $M.$

We mention that it is sometimes beneficial to consider the slightly larger covers $\til{\cL}^{\mrm{mon}}_{pt} = \til{\cL_{pt}} \times_{c_M} (2 \cdot \Z),$ $\til{\cL}^{\mrm{max}}_{pt} = \til{\cL_{pt}} \times_{c_M} \Z,$ defined via the evident inclusions $2N_M \cdot \Z \subset 2 \cdot \Z \subset \Z.$ This corresponds to extending coefficients to $\Lambda_{M,\tmon} = \bK[s^{-1},s]],$ with $\deg(s)=-2,$ and $\Lambda_{\tmon} = \Lambda_{\Delta_M,\tmon} = \bK[t^{-1},t]],$ $\deg(t) = -1,$ respectively.


In case when $H$ is degenerate, we consider a perturbation $\cD = (K^H,J^H),$ with $K^H \in \cH,$ such that $H^{\cD} = H \# K^{H}$ is non-degenerate, and $J^H$ is generic with respect to $H^{\cD},$ and define the complex $CF(H;\cD) = CF(H^{\cD};J^H)$ generated by $\til{\cO}(H;\cD) = \til{\cO}(H^{\cD}),$ and filtered by the action functional $\cA_{H;\cD} = \cA_{H^{\cD}}.$ 

\subsubsection{Relative Hamiltonian case.}\label{subsec:rel-Ham}

Consider $H \in \cH,$ and $L \subset M$ a monotone Lagrangian as above. Let $\cP_{pt}L$ be the space of path from $L$ to $L$ in $M,$ contractible relative to $L.$ Let $\mu_L: \pi_1(\cP_{pt}L) \cong \pi_2(M,L) \to N_L \cdot \Z,$ be the composition of the Hurewicz map with the Maslov class. Let $\til{\cP}^{\min}_{pt} L = \til{\cP_{pt}} \times_{\mu_L} (N_L \cdot \Z)$ be the cover of $\cP_{pt} L$ associated to $\mu_L.$ The elements of $\til{\cP}^{\min}_{pt} L$ can be considered to be equivalence classes of pairs $(x,\overline{x})$ of $x \in {\cP}_{pt} L$ and its capping $\overline{x}:\D \to M,$ $\overline{x}|_{\del \D \cap \{\Im(z) \geq 0\}} = x,$ $\overline{x}(\del \D \cap \{\Im(z) \leq 0\}) \subset L.$ The symplectic action functional \[\cA_{\LH}: \til{\cP}^{\min}_{pt} L \to \R \] is given by \[\cA_{\LH}(x,\overline{x}) = \int_0^1 H(t,x(t)) - \int_{\overline{x}} \om,\] that is well-defined by monotonicity: $\om_L = \kappa \cdot \mu.$ 
Let $\{\phi^t_H\}_{t \in [0,1]}$ be the Hamiltonian flow of $H.$  Assuming $(\LH)$ is non-degenerate, that is $\phi^1_H(L)$ intersects $L$ transversely, the generators over $\bK = \bF_2$ of the Floer complex $CF(\LH;J)$ are the lifts $ \til{\cO}(\LH)$ to $\til{\cP}^{\min}_{pt} L$ consisting of integral trajectories $\cO(\LH)$ of $\{X^t_H\}_{t \in [0,1]}$ with endpoints in $L.$ These are the critical points of $\cA_{\LH},$ and we denote by $\Spec(\LH) = \cA(\til{\cO}(\LH))$ the set of its critical values. Choosing a generic $\{J_t \in \cJ(M,\om)\}_{t \in [0,1]},$ and writing the asymptotic boundary value problem on maps $u:(\R \times [0,1],\R \times \{0\} \cup \R \times \{1\}) \to (M,L)$ defined by the negative formal gradient on $\cP_{pt} L$ of $\cA_{\LH},$ the count of isolated solutions, modulo $\R$-translation, gives a differential $d_{\LH;J}$ on the complex $CF(\LH;J),$ $d^2_{\LH;J} = 0.$ This complex is graded by the Conley-Zehnder (or Robbin-Salamon) index $CZ(x,\bar{x})$, with the property that the action of the generator $A = N_L$ of $N_L\cdot \Z$ has the effect $CZ(x,\bar{x} \# A) = CZ(x,\bar{x}) - N_L.$ Its homology $HF_*(\LH)$ does not depend on the generic choice of $J.$ Moreover, considering generic families interpolating between different Hamiltonians $H,H',$ and writing the Floer continuation map, where the negative gradient depends on the $\R$-coordinate we obtain that $HF_*(\LH)$ in fact does not depend on $H$ either. While $CF_*(\LH;J)$ is finite-dimensional in each degree, it is worthwhile to consider its completion in the direction of decreasing action. In this case it becomes a free graded module of finite rank over the Novikov field $\Lambda_{M,\tmin} = \bK[p^{-1},p]]$ with $p$ being a variable of degree $(-N_L).$ 

Moreover, for $a \in \R$ the subspace $CF(\LH;J)^a$ spanned by all generators $(x,\bar{x})$ with $\cA_{\LH}(x,\bar{x}) < a$ forms a subcomplex with respect to $d_{\LH;J},$ and its homology $HF(\LH)^a$ does not depend on $J.$ Arguing up to $\epsilon,$ one can show that a suitable continuation map sends $HF(\LH)^a$ to $HF(H',L)^{a + \cE_{+}(H-H')},$ for $\cE_{+}(F) = \int_{0}^{1} \max_M(F_t)\,dt.$ Finally, one can show that $HF(\LH)$ depends only on the class $[H] \in \til{\Ham}(M,\om).$ 

We mention that it is sometimes beneficial to consider the slightly larger cover $\til{\cP}^{\mrm{mon}}_{pt} L = \til{\cP}_{pt} L \times_{\mu_L} \Z,$ defined via $N_L \cdot \Z \subset \Z.$ This corresponds to extending coefficients to $\Lambda_{\tmon} = \Lambda_{L,\tmon} = \bK[t^{-1},t]],$ with $\deg(t) = -1.$  

In case when the intersection $\phi^1_H(L) \cap L$ is not transverse, we consider a perturbation $\cD = (K^H,J^H),$ with $K^H \in \cH,$ such that $(H^{\cD},L)$ for $H^{\cD} = H \# K^{H}$ is non-degenerate, and $J^H$ is generic with respect to $(H^{\cD},L),$ and define the complex $CF(\LH;\cD) = CF(H^{\cD},L;J^H)$ generated by $\til{\cO}(\LH;\cD) = \til{\cO}(H^{\cD},L),$ and filtered by the action functional $\cA_{\LH;\cD} = \cA_{H^{\cD},L}.$ 

\subsubsection{Anchored Lagrangian Floer homology.}\label{subsec:anchors}

Following \cite{OhBook} we introduce the following decoration of a Lagrangian submanifold. Fix a base-point $w \in M,$ and a Lagrangian subspace $\lambda_w \subset T_w M.$

\begin{df}\label{def:anchor}
An anchored Lagrangian brane $\underline{L} = (L,\alpha,\lambda)$ is a triple consisting of $L \subset M$ a monotone Lagrangian submanifold, $\alpha,$ which is the data of a point $x \in L,$ and a path $\alpha:[0,1] \to M$ from $\alpha(0) = w$ to $\alpha(1) = x,$ and a section $\lambda$ of the Lagrangian Grassmannian $\alpha^* {\mathcal{L}ag}(M,\om) \to [0,1]$ over $\alpha,$ with $\lambda(0) = \lambda_w.$   
\end{df}
We say that an anchored brane $\ul L = (L,\alpha,\lambda)$ enhances, or decorates $L.$ For two anchored Lagrangians $\underline{L},\underline{L}',$ the path $\alpha_{\ul L, \ul L'} = \overline{\alpha} \# \alpha'$ prescribes a connected component $\cP(\underline{L},\underline{L}') \subset \cP(L,L')$ in the space of paths from $L$ to $L'$, wherein it gives a base-point. One also obtains a natural section $\lambda(\ul L, \ul L')$ of $\alpha_{\ul L, \ul L'}^* {\mathcal{L}ag}(M,\om) \to [0,1].$

Now assume that $\ul L, \ul L'$ are compatible. This means $\kappa = \kappa_L = \kappa_{L'},$ $N = N_L = N_{L'},$ and moreover, the $\bK$-counts $d_{L},d_{L'}$ of Maslov index $2$ $J$-holomorphic disks on $L,$ and $L',$ with respect to a generically chosen $J \in \cJ(M,\om),$ agree: $d_L = d_{L'}.$ Moreover, the Maslov index $\mu_{\ul L, \ul L'}: \pi_1(\cP(\ul L, \ul L')) \to \Z$ takes values in $N \cdot \Z,$ and satisfies $\om_{\ul L, \ul L'} = \kappa \cdot \mu_{\ul L,\ul L'}.$ This will hold in the main situation of interest for us: when $L' = (\phi^1_H)^{-1} L,$ for $H \in \cH.$  

Let $\til{\cP}^{\min}(\ul L, \ul L') = \til{\cP}(\ul L, \ul L') \times_{\mu_{\ul L,\ul L'}} (N \cdot \Z)$ be the cover of $\cP(\ul L, \ul L')$ associated to $\mu_{\ul L,\ul L'}.$ The elements of $\til{\cP}^{\min}(\ul L, \ul L')$ can be considered to be equivalence classes of pairs $(x,\overline{x})$ of $x \in \cP(\ul L, \ul L')$ and its capping $\overline{x}:[0,1] \times [0,1] \to M,$ $\overline{x}({[0,1] \times \{0\}}) \subset L,$  $\overline{x}({[0,1] \times \{1\}}) \subset L',$ $\overline{x}(0,t) = \alpha_{\ul L, \ul L'}(t),$ and
$\overline{x}(1,t) = x(t),$ for $t \in [0,1].$ The symplectic action functional \[\cA_{\ul L, \ul L'}: \til{\cP}^{\min}(\ul L, \ul L') \to \R \] is given by \[\cA_{\ul L, \ul L'}(x,\overline{x}) =  - \int_{\overline{x}} \om,\] that is well-defined by monotonicity: $\om_{\ul L,\ul L'} = \kappa \cdot \mu_{\ul L,\ul L'}.$ 

Assuming that $(L,L')$ is non-degenerate, that is $L$ and $L'$ intersect transversely, the generators over $\bK = \bF_2$ of the Floer complex $CF(\ul L, \ul L';J)$ are the lifts $ \til{L\cap L'}$ of $L\cap L'$ to $\til{\cP}^{\min}(\ul L, \ul L'),$ which are the critical points $\cA_{\ul L, \ul L'}.$ We denote by $\Spec(\ul L,\ul L') = \cA_{\ul L, \ul L'}(\til{L\cap L'})$ the set of its critical values. In this case, choosing a generic $\{J_t \in \cJ(M,\om)\}_{t \in [0,1]},$ and writing the asymptotic boundary value problem on maps $u:(\R \times [0,1],\R \times \{0\},\R \times \{1\}) \to (M,L,L')$ defined by the negative formal gradient on $\til{\cP}^{\min}(\ul L, \ul L')$ of $\cA_{\ul L, \ul L'},$ the count of isolated solutions, modulo $\R$-translation, gives a differential $d_{\ul L , \ul L';J}$ on the complex $CF(\ul L , \ul L';J),$ $d^2_{\ul L , \ul L';J} = 0.$ This complex is graded, via the section $\lambda_{\ul L , \ul L'},$ by the Conley-Zehnder (or Robbin-Salamon) index $CZ(x,\bar{x})$, with the property that the action of the generator $A = N$ of $N\cdot \Z$ has the effect $CZ(x,\bar{x} \# A) = CZ(x,\bar{x}) - N.$ The homology $HF_*(\ul L, \ul L')$ of this complex does not depend on the generic choice of $J.$ While $CF(\ul L , \ul L';J)$ is finite-dimensional in each degree, it is worthwhile to consider its completion in the direction of decreasing action. In this case it becomes a free graded module of finite rank over the Novikov field $\Lambda_{L,L',\tmin} = \bK[p^{-1},p]]$ with $p$ being a variable of degree $(-N).$ 

Moreover, for $a \in \R$ the subspace $CF(\ul L, \ul L';J)^a$ spanned by all generators $(x,\bar{x})$ with $\cA_{\ul L, \ul L'}(x,\bar{x}) < a$ forms a subcomplex with respect to $\cA_{\ul L, \ul L'},$ and its homology $HF(\ul L, \ul L')^a$ does not depend on $J.$ 

We mention that it is sometimes beneficial to consider the slightly larger cover $\til{\cP}^{\mrm{mon}}(\ul L, \ul L') = \til{\cP}(\ul L, \ul L') \times_{\mu_{\ul L, \ul L'}}  \Z,$ defined via $N \cdot \Z \subset \Z.$ This corresponds to extending coefficients to $\Lambda_{\tmon} = \bK[t^{-1},t]],$ with $\deg(t) = -1.$ 

In case when the intersection $L \cap L'$ is not transverse, following \cite{SeidelBook}, we consider a perturbation datum $\cD = (K^{\ul L,\ul L'},J^{\ul L, \ul L'}),$ with $K^{\ul L,\ul L'} \in \cH,$ such that $(\ul L, (K^{\ul L,\ul L'})^*\ul L')$ is non-degenerate (see Secion \ref{subsec:relations} for the pull-back notation), and $J^{\ul L, \ul L'}$ is generic with respect to $(\ul L, (K^{\ul L,\ul L'})^*\ul L'),$  and define the complex $CF(\ul L,\ul L';\cD) = CF(\ul L,\ul L';K^{\ul L,\ul L'},J^{\ul L, \ul L'})$ generated by the lifts $\til{\cO}(\ul L, \ul L';\cD) = \til{\cO}(\ul L, \ul L'; K^{\ul L,\ul L'})$ to $\til{\cP}^{\min}(\ul L, \ul L')$ of Hamiltonian chords $\cO(\ul L, \ul L'; K^{\ul L,\ul L'})$ of $K^{\ul L,\ul L'}$ from $L$ to $L'$ in $\cP(\ul L, \ul L')$    and filtered by the action functional $\cA_{\ul L,\ul L';\cD} = \cA_{L,L',K^{\ul L,\ul L'}},$ given by \begin{equation}\label{eq:anchor-Hamiltonian}\cA_{L,L',K^{\ul L,\ul L'}}(x,\overline{x})= \int_{0}^{1} K^{\ul L,\ul L'}(t,x(t)) \,dt - \int_{\overline{x}} \om .\end{equation}

Finally we remark (see \cite{OhBook,SeidelBook,BiranCorneaS-Fukaya}) that for each triple $\ul L, \ul L', \ul L''$, and $\epsilon>0,$ there exits perturbation data $\cD$ and a product map $CF^{a}(\ul L, \ul L';\cD) \otimes CF^{a'}(\ul L', \ul L'';\cD) \to CF^{a+a'+\epsilon}(\ul L, \ul L'';\cD),$ defined by counting disks with $3$ boundary punctures satisfying a suitable non-linear Cauchy Riemann equation, with boundary conditions on $L,L',L''$ and asymptotic to generators of the three complexes at the punctures. In particular this yields a product map \[HF(\ul L, \ul L') \otimes HF(\ul L', \ul L'') \to HF(\ul L, \ul L'').\]


\subsubsection{Non-Archimedean filtrations and extension of coefficients.}\label{subsec:non-Arch}

Let $\Lambda$ be a field. A non-Archimedean valuation on $\Lambda$ is a function $\nu:\Lambda \to \R \cup \{+\infty\},$ such that  \begin{enumerate}
	\item $\nu(x) = +\infty$ if and only if $x = 0,$
	\item $l(xy) = \nu(x) + \nu(y)$ for all $x,y \in \Lambda,$
	\item $l(x+y) \geq \min\{\nu(x),\nu(y)\},$ for all $x,y \in \Lambda.$
\end{enumerate}
We set $\Lambda_0 = \nu^{-1}([0,+\infty)) \subset \Lambda$ to be the subring of elements of non-negative valuation.  

It will sometimes be convenient to work with a larger coefficient ring in the Floer complexes. The universal Novikov field is defined as \[\Lambda_{\tuniv} = \{\sum_j a_j T^{\lambda_j}\,|\, a_j \in \bK, \lambda_j \to +\infty \}. \]This field possesses a non-Archimedean valuation $\nu: \Lambda_{\tuniv} \to \R \cup \{+\infty\}$ given by $\nu(0) = +\infty,$ and \[\nu(\sum a_j T^{\lambda_j}) = \min\{\lambda_j\,|\,a_j \neq 0 \}.\] 
The fields $\Lambda_{M,\tmin} \subset \Lambda_{M,\tmon}$ embed into $\Lambda_{\tuniv}$ via $s \mapsto T^{2\kappa_M},$ and the fields $\Lambda_{L,\tmin} \subset \Lambda_{L,\tmon}$ embed into $\Lambda_{\tuniv}$ via $t \mapsto T^{\kappa_L}.$ This lets us pull back the valuation on $\Lambda_{\tuniv}$ to a valuation on each one of $\Lambda_{M,\tmin}, \Lambda_{M,\tmon},\Lambda_{L,\tmin}, \Lambda_{L,\tmon}.$ 


Now let $\Lambda$ be a field with non-Archimedean valuation $\nu.$ Following \cite{UsherZhang}, given a finite dimensional $\Lambda$-module $C,$ we call a function $l:C \to \R \cup \{-\infty\}$ a non-Archimedean filtration (function), if it satisfies the following properties: \begin{enumerate}
	\item $l(x) = -\infty$ if and only if $x = 0,$
	\item $l(\lambda x) = l(x) - \nu(\lambda)$ for all $\lambda \in \Lambda, x \in C,$
	\item \label{prop:maximu} $l(x+y) \leq \max\{l(x),l(y)\},$ for all $x,y \in C.$
\end{enumerate}

It is easy to see \cite[Proposition 2.1]{EntovPolterovichCalabiQM}, \cite[Proposition 2.3]{UsherZhang} that the maximum property \eqref{prop:maximu} implies that whenever $l(x) \neq l(y),$ one has in fact \begin{equation}\label{eq:max property filtration} l(x+y) = \max\{l(x),l(y)\}.\end{equation} A $\Lambda$-basis $(x_1,\ldots,x_N)$ of $(C,l)$ is called {\em orthogonal} if \[l(\sum \lambda_j x_j) = \max \{l(x_j) -\nu{\lambda_j} \} \] for all $\lambda_j \in \Lambda.$ It is called {\em orthonormal} if in addition $l(x_j) = 0$ for all $j.$ At this point, we note that a linear transformation $T:C \to C$ with matrix $P \in GL(N,\Lambda_0)$ in an orthonormal basis satisfies $T^*l = l.$ In particular it sends each orthogonal, respectively orthonormal, basis to an orthogonal, respectively orthonormal basis. 

Consider each Floer complex from Sections \ref{subsec:abs-Ham}, \ref{subsec:rel-Ham}, \ref{subsec:anchors} as a finite-dimensional $\Lambda$-module $C,$ for suitable Novikov field $\Lambda.$ The function $\cA:C \to \R \cup \{-\infty\}$ given by $\cA(x) = \inf\{a\,|\, x \in C^a\}$ is a non-Archimedean filtration. It can be computed as follows. Consider a standard basis $x_1,\ldots,x_N$ of $C$ over $\Lambda,$ consisting of arbitrarily chosen lifts of the finite set of periodic orbits, Hamiltonian chords, or Lagrangian intersections involved. Then we have \begin{equation}\cA(\sum \lambda_j x_j) = \max\{\cA(x_j) - \nu(\lambda_j)\}\end{equation} for all $\lambda_j \in \Lambda.$ In other words $x_1,\ldots,x_N$ is an orthogonal basis for $(C,\cA).$ Finally, we note that $d^*\cA \leq \cA,$ and in fact for each $x \in C \setminus \{0\}$ the strict inequality $\cA(d(x)) < \cA(x)$ holds.

To extend coefficients in $C,$ we take \[\overline{C} = C \otimes_{\Lambda} \Lambda_{\tuniv}\] and define a non-Archimedean filtration function $\cA:\overline{C} \to \R\cup \{-\infty\}$ on $\overline{C}$ by declaring that $x_1 \otimes 1,\ldots,x_N \otimes 1$ is an orthogonal basis for $(\overline{C},\cA).$ Finally, we note that the basis $(\overline{x}_1,\ldots,\overline{x}_N) = (T^{\cA(x_1)} x_1, \ldots, T^{\cA(x_N)} x_N)$ is an orthonormal basis of $(\overline{C},\cA)$ that is canonical, in the sense that it does not depend on the ambuguity in the choice of $x_1,\ldots,x_N.$


\subsubsection{Relations between the Floer theories}\label{subsec:relations}
It is first useful to discuss the dependence of $HF(\ul L, \ul L')$ and $HF(\ul L, \ul L')^a$ from Section \ref{subsec:anchors} on Hamiltonian deformations of $L$ and $L'.$ Following \eqref{eq:anchor-Hamiltonian}, we first define for anchored Lagrangians $\ul L, \ul L'$ and $H \in \cH$ the complex $CF(\ul L, \ul L',H;\cD),$ where the Hamiltonian term $K^{\ul L, \ul L',H}$ in $\cD$ can be taken to be identically zero, if $\phi^1(L)$ and $L'$ (alternatively $L$ and $(\phi^1_H)^{-1} L'$) intersect transversely. This complex is filtered by $\cA_{\ul L, \ul L',H; \cD}$ defined as in \eqref{eq:anchor-Hamiltonian}, with $H \# K^{\ul L, \ul L',H}$ replacing the term $K^{\ul L, \ul L'}.$ Now, for $H = 0$ we obtain, after an evident identification, the filtered Lagrangian Floer complex $CF(\ul L, \ul L';\cD)$ as in Section \ref{subsec:anchors}, and for $\ul L = \ul L',$ we obtain the filtered Hamiltonian Floer complex relative to $L,$ as in Section \ref{subsec:rel-Ham}. 
For $H \in \cH$ and anchored brane $\ul L = (L,\alpha,\lambda),$ we define the induced brane to be \[ H_* \ul L = \big(\phi^1_H(L), \alpha \# \left\{\phi^t_H \alpha(1)\right\}, \lambda \# \left\{D_{\alpha(1)}\phi^t_H(\lambda(1))\right\}\big).\] By \cite[Chapter 14]{OhBook} we now have the isomorphisms of filtered graded complexes: \begin{align} \label{eq:naturality right}&CF(\ul L,\ul L',H;\cD) \cong CF(\ul L, \overline{H}_*\ul L';\cD),\\ \label{eq:naturality left}&CF(\ul L,\ul L',H;\cD) \cong CF(H_* \ul L, \ul L';\cD),\end{align} for suitably chosed perturbation data $\cD$ in each case, whose Hamiltonian terms can all be chosen to be as $C^1$-small as necessary. These isomorphisms come essentially from the naturality transformations: \begin{align*}& u(s,t) \mapsto v_2(s,t) = \phi^t_{\overline{H}} u(s,t),\\ & u(s,t) \mapsto v_1(s,t) = \phi^{1-t}_{\overline{H}} u(s,t).\end{align*}

It shall be convenient to denote, for each anchored Lagrangian brane $\ul L,$ \[H^* \ul L := \overline{H}_*\ul L.\]

In particular we obtain the isomorphisms of filtered graded complexes \begin{align} \label{eq:iso right}&CF(\LH;\cD) \cong CF(\ul L, H^*\ul L;\cD),\\ \label{eq:iso left}&CF(\LH;\cD) \cong CF(H_* \ul L, \ul L;\cD),\end{align} for each anchored brane $\ul L$ enhancing the monotone Lagrangian $L.$

In a different direction, one can express the absolute Hamiltonian Floer complex as a special case of the relative Hamiltonian Floer complex for a well-chosen Lagrangian submanifold. More precisely, for a closed weakly monotone symplectic manifold $(M,\om),$ let $M \times M^-$ denote the symplectic manifold $(M\times M, \om \oplus -\om).$ It admits a natural diagonal Lagrangian submanifold $L = \Delta_M \subset M \times M^{-}.$ We note that by smooth reparametrization in the $t$-coordinate on $[0,1] \times M,$ $\lambda: [0,1] \to [0,1],$ $\lambda' \geq 0, \lambda(0) = 0, \lambda(1) = 1$ with $\lambda' \equiv 0$ near $\{0\} \cup \{1\},$ we may assume that our time-dependent Hamiltonians, as well as our time-dependent almost complex structures are in fact periodic, extending smoothly to $\R/\Z \times M,$ and are moreover constant in the $t$ variable in a neighborhood of $\pi([1/2,1]) \times M \subset \R/\Z \times M,$ for $\pi:[0,1] \to \R/\Z$ the natural quotient projection. For such a pair $(H,J)$ there is a canonical isomorphism \cite{LeclercqZapolsky} of filtered graded complexes \begin{equation}\label{eq:Floer abs as rel} CF(H;J) \cong CF(\Delta_M, \hat{H}; \hat{J}) \end{equation} 
where $\hat{H} \in \cH_{M \times M^{-}}$ is defined by \[\hat{H}(t,x,y) = \frac{1}{2}H(t/2, x),\] and $\hat{J}_t \in \cJ(M\times M^{-}, \om \oplus -\om)$ is given by \[\hat{J}_t(x,y) = J_t(x) \oplus -J_0(y).\]

\subsection{Quantum homology, PSS isomorphism, and module structures}\label{subsec:QH}


In this section we describe quantum homology and its relative version. It may be helpful to think of them as absolute and relative Hamiltonian Floer homology, when the Hamiltonian is in fact given by a $C^2$-small, time-independent Morse function. Alternatively, one can consider them as the cascade approach \cite{Frauenfelder} to Morse homology for the unperturbed symplectic area functional on the spaces $\til{\cL}^{\min}_{pt}$ and $\til{\cP}^{\min}_{pt} L.$ While we describe only the algebraic structures pertinent to our arguments, there are other algebraic structures on these Floer complexes, such as pair-of-pants products, and structures of Fukaya categories. For further information on these subjects we refer for example to \cite{SeidelBook,OhBook,LeclercqZapolsky}.

\subsubsection{Quantum homology: absolute case}

Set $QH(M) = H_*(M;\Lambda_{M,\tmin}),$ as a $\Lambda_{M,\tmin}$-module. This module has the structure of a graded-commutative unital algebra over $\Lambda_{M,\tmin}$ whose product, deforming the classical intersection product on homology, is defined in terms of $3$-point genus $0$ Gromov-Witten invariants \cite{McDuffSalamonBIG,Liu-assoc,RuanTian-qh1,RuanTian-qh2,Witten-2d}. The unit for this {\em quantum product} is the fundamental class $[M]$ of $M,$ as in the case of the classical homology algebra. The non-Archimedean filtration $\cA:QH(M) \to \R \cup \{-\infty\}$ is given by declaring $E \otimes 1_{\Lambda},$ for a basis $E$ of $H_*(M,\bK)$ to be an orthonormal basis for $(QH(M),\cA).$


\subsubsection{Quantum homology: relative case}

For a generic triple $(f,\rho,J),$ consisting of  a Morse function $f:L \to \R,$  Riemannian metric $\rho$ on $L,$ and $J \in \cJ_M,$ one defines a deformation $d_{f,\rho,J}$ of the Morse differential $d_{f,\rho} \otimes \id$ on $C(f,\rho; \Lambda_{L,\tmon}) = C(f,\rho; \bK) \otimes_{\bK} \Lambda_{L,\tmon}$ by counting isolated, up to the action of suitable reparametrization groups, {\em pearly trajectories} consisting of configurations of negative gradient trajectories of $f,$ connected by $J$-holomorphic disks with boundary on $L:$ the first trajectory is asymptotic at $s \to -\infty$ to a point $x \in \mrm{Crit}(f),$ and the last trajectory asymptotic at $s \to +\infty$ to a point $y \in \mrm{Crit}(f).$ We remark that the energy of a pearly trajectory is defined to be the sum of $\om$-areas of all the $J$-holomorphic disks that appear in it. The homology of the quantum differential $d_{f,\rho,J}$ is called the Lagrangian quantum homology $QH(L)$ of $L$ \cite{BiranCorneaRigidityUniruling,Bi-Co:qrel-long,BiranCorneaLagrangianQuantumHomology}. The $\Lambda_{L,\tmin}$-module $QH(L)$ has the structure of a unital $\Lambda_{L,\tmin}$-algebra, which is, however, not in general graded-commutative. It inherits a natural non-Archimedean filtration from $\Lambda_{L,\tmin}.$


\subsubsection{Floer homology as a module over quantum homology}

In the absolute case, as discussed in detail in \cite{PolSheSto}, an element $\alpha_M \in QH_m(M) \setminus \{0\}$ gives, for $H \in \cH,$ and $r \in \Z, a \in \R$ a map \[(\alpha_M\ast):HF_{r}(H)^a \to HF_{r+m-2n}(H)^{a+\cA(\alpha_M)}.\] It is in fact a morphism \[(\alpha_M\ast):V_{r}(H) \to V_{r+m-2n}(H)[\cA(\alpha_M)]\] of persistence modules, as defined in Section \ref{subsec: Floer persistence}. This morphism is constructed, in a manner very similar to the quantum cap product (see \cite[Example A.4]{PSS} or \cite{SeidelMCG,Schwarz:action-spectrum,Floer3}) by counting negative $\rho$-gradient trajectories $\gamma:(-\infty,0] \to M$ of a Morse function $f$ on $M,$ for a generic pair $(f,\rho),$ asymptotic at $s \to -\infty$ to critical points of $f,$ and having $\gamma(0)$ incident to Floer cylinders $u:\R \times S^1 \to M$ at $u(0,0).$


In the relative case, as discussed in the filtration-free setting, albeit on the chain level in \cite[Section 5.6.2]{Bi-Co:qrel-long} and \cite[Section 3.5]{Char}, an element $\alpha_L \in QH_m(L) \setminus \{0\}$ gives, for $H \in \cH,$ and $r \in \Z,$ a morphism \[(\alpha_L\ast):V_{r}(\LH) \to V_{r+m-2n}(\LH)[\cA(\alpha_L)]\] of persistence modules. This morphism is constructed, in a manner similar to the absolute case, by counting $(f,\rho,J)$-pearly trajectories, starting from critical points of $f$ and ending at a point of incidence to Floer cylinders $u:\R \times [0,1] \to M$ at $u(0,0).$

A similar definition applies in the case of the Lagrangian Floer homology of the pair $(\ul L, \ul L')$ from Section \ref{subsec:anchors}.

\subsubsection{Relations between Floer homologies, and quantum homology}

It shall be important to point out that in the relative case, the module action of $QH(L)$ on the Floer persistence module $V_*(\LH)$ commutes with the isomorphism \[V_*(\LH) \cong V_*(\ul L, {H}^*\ul L)\] from Section \ref{subsec:relations}.

Furthermore, for generic $J \in \cJ(M,\om),$ having set $\hat{J} = J(x) \oplus - J(y) \in \cJ(M \times M^{-}, \om \oplus - \om),$ we obtain an isomorphism \[\Phi: QH(M) \to QH(\Delta_M) \] noting that the coefficient rings $\Lambda_{M,\tmin}$ and $\Lambda_{\Delta_M,\tmin}$ in fact agree, via the map $p \mapsto q,$ both variables having degree $(-2N_M) = (-N_{\Delta_M}).$ Finally, the module actions of $QH(M)$ on $V_*(H)$ and $QH(\Delta_M)$ on $V_*(\Delta_H, \hat{H})$ agree via the isomorphism $\Phi$ and \eqref{eq:Floer abs as rel}.

\subsubsection{Piunikhin-Salamon-Schwarz isomorphisms}

In the absolute case, one obtains a map $PSS: QH(M) \to HF(H)$ by counting (for generic auxiliary data) isolated configurations of negative gradient trajectories $\gamma:(-\infty,0] \to M$ incident at $\gamma(0)$ with with the asymptotic of $\lim_{s \to -\infty} u(s,-),$ as $s \to -\infty$ of a map $u: \R \times S^1 \to M,$ satisfying a Floer equation \[\del_s \,u + J_t(u) \,(\del_t \,u - X^t_{K}(u)) = 0, \] where for $(s,t) \in \R \times S^1,$ $K(s,t) \in \sm{M,\R}$ is a small perturbation of $\beta(s) H_t,$ coinciding with it for $s \ll -1$ and $s \gg +1,$ and $\beta: \R \to [0,1]$ is a smooth function satisfying $\beta(s) \equiv 0$ for $s \ll -1$ and $\beta(s) \equiv 0$ for $s \gg +1.$ This so-called Piunikhin-Salamon-Schwarz map \cite{PSS} is an isomorphism of $\Lambda_{M,\tmin}$-modules, which in fact intertwines the quantum product on $QH(M)$ with the pair of pants product in Hamiltonian Floer homology.

In the relative case, one obtains a map $PSS: QH(L) \to HF(\LH)$ by counting (for generic auxiliary data) isolated configurations of pearly trajectories from critical points of a Morse function $f,$ to a point of incidence with $\lim_{s \to -\infty} u(s,-)$ of a map \[u: (\R \times [0,1],\R \times \{0\} \cup \R \times \{1\}) \to (M,L),\] satisfying a Floer equation $\del_s \,u + J_t(u) \,(\del_t \,u - X^t_{K}(u)) = 0,$ where for $(s,t) \in \R \times [0,1],$ $K(s,t) \in \sm{M,\R}$ is a small perturbation of $\beta(s) H_t$ as above. This so-called Lagrangian Piunikhin-Salamon-Schwarz map (see \cite{Bi-Co:qrel-long,BiranCorneaRigidityUniruling},\cite[Section 3.4]{Char}, \cite{Zap:Orient} and references therein) is an isomorphism of $\Lambda_{L,\tmin}$-modules, and in fact intertwines the quantum product on $QH(L)$ with the Lagrangian pair of pants product in relative Hamiltonian Floer homology.

%

Finally, it is worthwhile to note that the quantum product maps \[QH(M) \otimes QH(M) \to QH(M), \] \[QH(L) \otimes QH(L) \to QH(L), \] are isomorphic to the module action maps \[QH(M) \otimes HF(H) \to HF(H), \] \[QH(L) \otimes HF(\LH) \to HF(\LH),\] via the isomorphisms $\id \otimes PSS$ on the left hand side, and $PSS$ on the right hand side. 


\subsection{Spectral invariants}\label{subsec:spec}



Given a filtered complex $(C,\cA),$ to each homology class $\alpha \in H(C)$, denoting by $H(C)^a = H(C^a),$ $C^a = \cA^{-1} (-\infty,a),$ we define
a spectral invariant by \[c(\alpha, (C, \cA)) = \inf\{a \in \R\,|\, \alpha \in \ima(H(C)^a \to H(C)) \}.\] In the case of $(C,\cA) = (CF(\ul L, \ul L';\cD),\cA_{\ul L, \ul L';\cD})$ we denote $c(\alpha, \ul L, \ul L'; \cD) = c(\alpha,(C,\cA)).$ In the two cases of Hamiltonian Floer homology, one can obtain homology classes by the PSS isomorphism. This lets us define spectral invariants by: 
\[c(\alpha_M, H; \cD) = c(PSS(\alpha_M),(CF(H;\cD),\cA_{H;\cD})),\]
\[c(L;\alpha_L, H; \cD) = c(PSS(\alpha_L),(CF(H;\cD),\cA_{H;\cD})),\]
for $\alpha_M \in QH(M),\,\alpha_L \in QH(L).$ 
From the definition it is clear that the spectral invariants do not depend on the almost complex structure term in $\cD.$ Moreover, if $H,$ $(\LH),$ or $(\ul L, \ul L')$ are non-degenerate, we may choose the Hamiltonian term in $\cD$ to vanish identically, and denote the resulting invariants by:\[c(-,H),\; c(L;-,H),\; c(-,(\ul L, \ul L')).\] Moreover, by \cite[Section 5.4]{BiranCorneaRigidityUniruling} spectral invariants remain the same under extension of coefficients, hence below we do not specify the Novikov field $\Lambda$ that we work over. Spectral invariants enjoy numerous useful properties, the relevant ones of which we summarize here:

\begin{enumerate}
\item {\em spectrality:} for each $\alpha_M \in QH(M) \setminus \{0\},$ $\alpha_L \in QH(L) \setminus \{0\},$ $\alpha_{\ul L, \ul L'} \in HF(\ul L, \ul L')$ and $H \in \cH,$ \[c(\alpha_M, H) \in \Spec(H),\; c(\alpha_L, H) \in \Spec(\LH),\; c(\alpha_{\ul L, \ul L'}) \in \Spec(\ul L, \ul L').\]
\item {\em non-Archimedean property:} $c(-,H;\cD),\; c(L;-,H;\cD),\;c(-,\ul L, \ul L';\cD)$ are non-Archimedean filtration functions on $QH(M),$ $QH(L),$ $HF(\ul L, \ul L'),$ as modules over the Novikov field $\Lambda$ with its natural valuation. 
\item {\em continuity:} for each $\alpha_M \in QH(M) \setminus \{0\},$ $\alpha_L \in QH(L) \setminus \{0\},$ and $F,G \in \cH,$
\[|c(\alpha_M,F) - c(\alpha_M,G)| \leq \cE(F-G),\] \[|c(L;\alpha_L,F) - c(L;\alpha_L,G)| \leq \cE(F-G)\]
\item {\em triangle inquequality:} for each $\alpha_M,\alpha'_M \in QH(M),$ $\alpha_L, \alpha'_L \in QH(L),$ $F,G \in \cH,$ and $\alpha' \in HF(\ul L, \ul L'),$  $\alpha'' \in HF(\ul L', \ul L''),$

\[c(\alpha_M \ast \alpha'_M,F\#G) \leq c(\alpha_M ,F) + c(\alpha'_M ,G),\]
\[c(\alpha_L \ast \alpha'_L,F\#G) \leq c(\alpha_L ,F) + c(\alpha'_L ,G),\]
\[c( \alpha' \ast \alpha'',(\ul L, \ul L'')) \leq c(\alpha' ,(\ul L, \ul L')) + c(\alpha'' ,(\ul L', \ul L'')).\]



\end{enumerate}

In the abstract case of a complex $(C,d)$ over $\Lambda$ filtered by $\cA,$ the non-Archimedean property lets us consider the spectral invariant map, as an {\em induced non-Archimedean filtration function} \[\mrm{H}(\cA) : H(C,d) \to \R \cup \{-\infty\}.\] 

We remark that the key part of the non-Archimedean property, the maximum property, is called the {\em characteristic exponent} property in \cite{EntovPolterovichCalabiQM}. Moreover, we note that by Sections \ref{subsec:relations} and \ref{subsec:QH}, for $H \in \cH$ and $a \in QH(M),$ we have the following identity of spectral invariants: \[c(\Delta_M; \Phi(a), \hat{H}) = c(a,H),\] 
\[c(\Delta_M; \Phi(a), [H] \times \til{\id}) = c(a,[H]).\] 

\subsubsection{Spectral norm.}\label{subsubsec:spec norm}
For $H \in \cH$ we define its spectral pseudo-norm by \[\gamma(H) = c([M],H) + c(M,\overline{H}),\] which depends only on $[H],$ and by a result of \cite{Oh-specnorm} (see also \cite{Usher-sharp,McDuffSalamonBIG}) gives the following non-degenerate spectral norm $\gamma:\Ham(M,\om) \to \R_{\geq 0},$ \[\gamma(\phi) = \inf_{\phi^1_H = \phi} \gamma(H),\] and hence spectral distance $\gamma(\phi,\phi') = \gamma(\phi' \phi^{-1}).$ Similarly for $H \in \cH,$ and monotone $L \subset M$ with $QH(L) \neq 0,$ define \[\gamma(\LH) = c(L;[L],H) + c(L;[L],\overline{H}),\] then by \cite{Viterbo-specGF,KS-bounds} \[\gamma(L',L) = \inf_{\phi^1_H(L) = L'} \gamma(\LH),\] together with invariance under the left action of $\Ham(M,\om),$ defines a non-degenerate distance on the Hamiltonian orbit $\cO_L = \Ham(M,\om) \cdot L$ of $L.$ Finally, for $L' \in \cO_L,$ where $L = 0_{Q} \subset T^* Q$ is the zero-section in the cotangent bundle of a closed manifold $Q,$ the spectral norm $\gamma(L,L')$ can be reformulated as follows. Consider $\ul L$ decorating $L,$ and take $\ul L' = H^* \ul L$ decorating $(\phi^1_H)^{-1} L.$ Then \begin{equation}\label{eq:gamma in cotangent via x,y}\gamma(L,L') = c(x, (\ul L,\ul L')) - c(y, (\ul L, \ul L')),\end{equation} for homogeneous elements $x,y \in HF(\ul L, \ul L') \setminus \{0\}$  such that \begin{equation}\label{eq:action of pt x y}[pt] \ast x = y.\end{equation} 

We note that in view of \cite{Abouzaid-nearbyMaslov,FukayaSeidelSmith-cotangentsc,FukayaSeidelSmith-cotangentcat} the same reformulation allows one to define a distance function $\gamma(L,L')$ for any exact Lagrangian $L' \subset T^*L.$ Indeed, $L'$ is isomorphic in the suitably defined Fukaya category to $L,$ so that this isomorphism commutes with the action of $QH(L) \cong H_*(L).$ This implies in particular that for each brane $\ul L$ decorating $L,$ there exists a brane $\ul L'$ decorating $L',$ with $HF(\ul L, \ul L') \cong H_*(L),$ commuting with the action of $H_*(L).$ In this case $x, y$ are determined by \eqref{eq:action of pt x y} uniquely up to multiplication by $\bK \setminus\{0\},$ and we define $\gamma(L,L')$ by \eqref{eq:gamma in cotangent via x,y}. We note that by Poincar\'{e} duality for spectral invariants (see e.g. \cite{EntovPolterovichCalabiQM,LeclercqZapolsky}), this definition is equivalent to \[\gamma(L,L') = \inf c(x,(\ul L, \ul L')) + c(y, (\ul L',\ul L))\] where the infimum runs over all $\ul L, \ul L'$ decorating $L,L'$ and $x \in HF(\ul L, \ul L'),$ $y \in HF(\ul L', \ul L)$ with $x \ast y = u_{L} \in HF(\ul L, \ul L)$ corresponding to $[L]$ under $HF(\ul L, \ul L) \cong QH(L) \cong H_*(L),$ and $y \ast x = u_{L'} \in HF(\ul L', \ul L')$ corresponding to $[L']$ under $HF(\ul L', \ul L') \cong QH(L') \cong H_*(L').$ For further descriptions of metric structures coming from isomorphisms in Fukaya categories, we refer to \cite{BiranCorneaS-Fukaya}.


\subsection{Floer persistence}\label{subsec: Floer persistence}

\subsubsection{Rudiments of persistence modules}

Let $\mrm{Vect}_{\bK}$ denote the category of finite-dimensional vector spaces over $\bK,$ and $(\R,\leq)$ denote the poset category of $\R.$ A {\em persistence module} over $\bK$ is a functor \[V:(\R,\leq) \to \mrm{Vect}_{\bK}.\] In other words $V$ consists of a collection $\{V^a \in \mrm{Vect}_{\bK}\}_{a \in \R}$ and $\bK$-linear maps $\pi_V^{a,a'}:V^a \to V^{a'}$ for each $a \leq a',$ satisfying $\pi_V^{a,a} = \id_{V^a},$ and $\pi_V^{a',a''} \circ \pi_V^{a,a'} = \pi_V^{a,a''}$ for all $a\leq a'\leq a''.$ These functors with their natural transformations form an abelian category \[Fun((\R,\leq),\mrm{Vect}_{\bK}),\] where $A \in \hom(V,W)$ consists of a collection $\{A^a \in \hom_{\bK}(V^a,W^a)\}_{a \in \R}$ that commutes with the maps $\pi_{V}^{a,a'}, \pi_{V}^{a,a'},$ for each $a \leq a'.$ We require the following further technical assumptions, that hold in all our examples: \begin{enumerate}
	\item {\em support:} $V^a = 0$ for all $a \ll 0.$
	\item {\em finiteness:} there exists a finite subset $S \subset \R,$ such that for all $a,a'$ in each connected component of $\R \setminus S,$ the map $\pi_V^{a,a'}:V^a \to V^{a'}$ is an isomorphism.
	\item {\em continuity:} for each two consecutive elements $s_1< s_2$ of $S,$ and $a \in (s_1,s_2),$ the map $\pi^{a,s_2}: V^a \to V^{s_2}$ is an isomorphism.
\end{enumerate}

Persistence modules with these properties form a full abelian subcategory \[\pemod \subset Fun((\R,\leq),(\mrm{Vect}_{\bK})).\]

The {\em normal form theorem} \cite{CarlZom,CrawBo} for persistence modules states that the isomorphism class of $V \in \pemod$ is classified by a finite multiset $\cB(V) = \{(I_k,m_k)\}_{1 \leq k \leq N'}$ of intervals $I_k \subset \R,$ where $I_k = (a_k,b_k)$ for $k \in (0,K] \cap \Z,$ and $I_k = (a_k,\infty) $ for $k \in (K,N'] \cap \Z$ for some $0 \leq K= K(V) \leq N'.$ We denote $B=B(V) = N'-K \geq 0.$  The intervals are called bars, and a multiset of bars is called a barcode. The bar lengths are defined as $|(a_k,b_k)| = b_k - a_k,$ and $|(a_k,\infty)| = +\infty.$

The {\em isometry theorem} for persistence modules \cite{CdSGO-structure,BauLes,CCSGGO-proximity}, culminating the active development initiated in \cite{CEH-stability},  states the fact that the barcode map \[\cB:(\pemod,d_{\mrm{inter}}) \to (\barc,d_{\mrm{bottle}})\] \[V \mapsto \cB(V)\] is {\em isometric} for the following two distances. 

The {\em interleaving distance} between $V,W \in \pemod$ is given by \begin{align*}d_{\mrm{inter}}(V,W) = \inf \{\delta > 0\;|\; \exists &f \in \hom(V,W[\delta]), g \in \hom(W,V[\delta]), \\ &g[\delta]\circ f = sh_{2\delta,V}, f[\delta]\circ g = sh_{2\delta,W} \},\end{align*} where for $V \in \pemod,$ and $c \in \R,$ $V[c] \in \pemod$ is defined by pre-composition with the functor $T_c: (\R,\leq) \to (\R,\leq), t\to t+c,$ and for $c \geq 0,$ $sh_{c,V} \in \hom (V,V[c])$ is given by the natural transformation $\id_{(\R,\leq)} \to T_c.$ The pair $f,g$ from the definition is called a {\em $\delta$-interleaving}.

The {\em bottleneck distance} between $\cB,\cC \in \barc$ is given by \[d_{\mrm{bottle}}(\cB,\cC) = \inf \bra{\delta > 0\,|\;\exists\; \delta-\text{matching between} \;\cB,\cC },\] where a $\delta$-matching between $\cB,\cC$ is a bijection $\sigma:\cB^{2\delta} \to \cC^{2\delta}$ between two sub-multisets $\cB^{2\delta} \subset \cB,$ $\cC^{2\delta} \subset \cC,$ each containing all the bars of length $> 2\delta$ of $\cB,$ $\cC$ respectively, such that if $\sigma((a,b)) = (a',b')$ then $|a-a'|\leq \delta,$ $|b-b'| \leq \delta.$ 

Finally, we record the the quotient space $(\barc',d'_{\mrm{bottle}})$  of $(\barc,d_{\mrm{bottle}})$ by the isometric $\R$-action by shifts: $c \in \R$ acts by $\cB = \{(I_k,m_k)\} \mapsto \cB[c] = \{(I_k - c, m_k)\},$ and $d'_{\mrm{bottle}}([\cB],[\cC]) = \inf_{c\in \R} d'_{\mrm{bottle}}(\cB,\cC[c])$ for $\cB,\cC \in \barc.$ Note that bar-lengths give a well-defined map from $\barc'$ to multi-subsets of $\R_{>0} \cup \bra{+\infty}.$

\subsubsection{Floer persistence: interleaving, invariance, spectral norms}

As remarked in Section \ref{sec:filtered Floer} for each $r \in \Z,$ the degree $r$ subspace $C_r$ of the Floer complex $C$ considered therein is finite-dimensional over the base field $\bK.$ This implies that the degree $r$ homology $H_r(C)^a = H_r(C^a)$ is in $\mrm{Vect}_{\bK}$ for all $a \in \R.$ Furthermore, inclusions $C^a \to C^{a'}$ of graded complexes for $a \leq a',$ yield maps $\pi^{a,a'}:H_r(C)^a \to H_r(C)^{a'}.$ As it was first observed in \cite{PolShe} (see also \cite{PolSheSto}), the collection $V_r(C)$ of the vector spaces $\{H_r(C)^a\}_{a\in \R},$ and maps $\{\pi^{a,a'}\},$ constitutes an object of the category $\pemod.$ We will denote these persistence modules by $V_r(H;\cD), V_r(\LH;\cD), V_r(\ul L, \ul L';\cD)$ in general, and by $V_r(H), V_r(\LH), V_r(\ul L, \ul L'),$ when $H, (\LH), (\ul L, \ul L')$ are non-degenerate. We use these notations interchangeably, with the understanding that the former is used in the degenerate case, where the Hamiltonian terms in the perturbation data is considered to be as $C^2$-small as necessary, and the latter is used in the non-degenerate case.


Finally, by \cite{PolShe,PolSheSto} Floer continuation maps induce $\cE(H-H')$-interleavings between the pairs $V_r(H),V_r(H')$ and $V_r(\LH),V_r(H',L).$ This implies that \begin{align}\label{eq: barc Hofer Lip} & d_{int}(V_r(H),V_r(H')) \leq \til{d}_{\mrm{Hofer}}([H],[H']) \\ \notag &{d}_{int}(V_r(\LH),V_r(H',L)) \leq \til{d}_{\mrm{Hofer}}([H],[H']),\end{align} where $\til{d}$ is a pseudo-metric on $\til{\Ham}(M,\om)$ defined by \[\til{d}([H],[H']) = \inf \cE(F-G),\] the infimum running over all $F,G \in \cH$ with $[F] = [H],$ $[G] = [H'].$

We recall the ring $\Lambda_{\tmon} = \bK[t^{-1},t]]$ with variable $t$ of degree $(-1).$ For $H \in \cH,$ $L \subset M$ a monotone Lagrangian, and $\ul L, \ul L'$ two anchored Lagrangian branes, consider the associated Floer persistence modules $V_0(H),V_0(\LH),V_0(\ul L, \ul L')$ of degree $0$ with coefficients in $\Lambda_{\tmon}.$ Let $\cB_0(H), \cB(\LH), \cB(\ul L, \ul L')$ be the corresponding barcodes.  By \cite{KS-bounds} (see also \cite[Propositions 5.3, 6.2]{UsherBD2}) the images $\cB'_0(\phi^1_H), \cB'(\phi^1_H(L),L), \cB'(L, L', [\alpha_{\ul L, \ul L'}])$ of these barcodes in $(\barc',d'_{\mrm{bottle}})$ depend only on $\phi^1_H, (\phi^1_H(L),L),$ and $(L,L',[\alpha_{\ul L, \ul L'}])$ respectively, where $[\alpha_{\ul L, \ul L'}] \in \pi_0(\cP(L,L'))$ is the free homotopy class of $\alpha_{\ul L, \ul L'}$ in $\cP(L,L').$ We note that by \eqref{eq:iso right} $\cB'_0(\phi^1_H (L),L) = \cB'_0(L,(\phi^1_H)^{-1} L,[\alpha_{\ul L, H^* \ul L}]) = \cB'_0(\phi^1_H L,L,[\alpha_{H_* \ul L, \ul L}])$  in $\barc',$ for each $\ul L$ decorating $L.$ Finally, by a change of coordinates given by $\psi \in \Symp(M,\om),$ there is an identity of barcodes \begin{align}\label{eq:conj invariance barcodes}\cB'(\phi) = \cB'(\psi \phi \psi^{-1}), \; \cB'(L,L') = \cB'(\psi L, \psi L'),\\ \notag \cB'(L, L', [\alpha_{\ul L, \ul L'}]) = \cB'(\psi L, \psi L', [\psi \alpha_{\ul L, \ul L'}]).\end{align}


We define the bar-length spectrum of $\phi^1_H, (L, \phi^1_H(L)),$ and $(L,L',[\alpha_{\ul L, \ul L'}])$ to coincide with the corresponding sub-multisets of $\R_{>0} \cup \bra{+\infty}$ arranged as increasing sequences, taking into account multiplicities. We define the {\em boundary depth} $\beta(\phi), \beta(L, L')$ and $\beta(L,L',[\alpha_{\ul L, \ul L'}])$ of, respectively, $\phi, (L, L')$ with $L' \in \cO_L,$ and $(L,L',[\alpha_{\ul L, \ul L'}]),$ to be the {\em maximal length of a finite bar} in the associated barcodes. This notion was first introduced by Usher \cite{UsherBD1,UsherBD2} in different terms, and shown to satisfy various properties, including the above invariance statement.

Finally, in view of \eqref{eq: barc Hofer Lip}, we obtain for $\phi, \psi \in \Ham(M,\om),$ and $L',L'' \in \cO_L,$ \begin{align}\label{eq: barc' Hofer Lip} & d'_{\mrm{bottle}}(\cB'(\phi),\cB'(\psi)) \leq {d}_{\mrm{Hofer}}(\phi,\psi) \\ \label{eq: barc' Hofer Lip rel} &{d}'_{\mrm{bottle}}(\cB'(L,L'),\cB'(L,L'')) \leq {d}_{\mrm{Hofer}}(L',L''),\end{align} where \[d_{\mrm{Hofer}}(\phi,\psi) = \inf_{\phi^1_F = \phi, \phi^1_G = \psi} \til{d}([F],[G])\] is the celebrated Hofer metric \cite{HoferMetric,Lalonde-McDuff-Energy} on $\Ham(M,\om),$ and \[d_{\mrm{Hofer}}(L',L'') = \inf_{\phi(L') = L''} d_{\mrm{Hofer}}(\id, \phi) \] is Chekanov's Lagrangian Hofer metric \cite{ChekanovFinsler}. The method of filtered continuation elements introduced in \cite{ShelukhinHZ}, with inspiration from \cite{BiranCorneaS-Fukaya,AK-simplehomotopy}, was used to improve \eqref{eq: barc' Hofer Lip} to \begin{equation}\label{eq: barc' spectral Lip}
d'_{\mrm{bottle}}(\cB'(\phi),\cB'(\psi)) \leq \frac{1}{2}{\gamma}(\phi,\psi). 
\end{equation}
This was extended to the relative setting, reproving \eqref{eq: barc' spectral Lip}, in \cite{KS-bounds}, and showing the following extension of \eqref{eq: barc' Hofer Lip rel}: \begin{equation}\label{eq: barc' spectral Lip rel}{d}'_{\mrm{bottle}}(\cB'(L,L'),\cB'(L,L'')) \leq \frac{1}{2}{\gamma}(L',L''),\end{equation} for $L', L'' \in \cO_L,$ with the assumption that $L$ is {\em wide} \cite{BiranCorneaRigidityUniruling}, that is $QH(L) \cong H_*(L;\Lambda),$ as $\Lambda$-modules.
 
Furthermore, we note that by Section \ref{subsec:relations}, for $H \in \cH_M,$ and all $r\in \Z,$ the persistence modules $V_r(\hat{H},\Delta_M)$ and $V_r(H)$ agree. Hence \[\cB'((\phi \times \id) \Delta_M, \Delta_M) = \cB'(\phi)\] for all $\phi \in \Ham(M,\om).$ In particular, $\beta((\phi \times \id) \Delta_M, \Delta_M) = \beta(\phi).$

 
We conclude this section by observing that \eqref{eq: barc' Hofer Lip} and \eqref{eq: barc' Hofer Lip rel} imply that the boundary depth $\beta(\phi)$ is $1$-Lipschitz in the Hamiltonian spectral norm, while $\beta(L,L')$ is $1$-Lipschitz in the Lagrangian spectral norm in the $L'$ variable. In particular $\beta(\phi),$ similarly to $\gamma(\phi),$ is defined for arbitrary $\phi \in \Ham(M,\om),$ and $\beta(L,L'),$ similarly to $\gamma(L,L'),$ is defined for arbitrary $L' \in \cO_L.$  
 
\subsubsection{Bar-lengths, extended coefficients, and torsion exponents.}


We note the following two alternative descriptions of the bar-length spectrum. Firstly, consider each one of the relevant Floer complexes $(C,d)$ over $\Lambda = \Lambda_{\tmin},$ filtered by $\cA$ as in Section \ref{subsec:non-Arch}. By \cite{UsherZhang}, the complex $(C,d)$ admits an orthogonal basis \[E = (\xi_1,\ldots,\xi_{B},\eta_1,\ldots,\eta_K,\zeta_1,\ldots,\zeta_K)\] such that $d\xi_j = 0$ for all $j \in (0,B] \cap \Z,$ and $d \zeta_j = \eta_j$ for all $j \in (0,K] \cap \Z.$ The finite bar-lengths are then given by $\bra{\beta_j = \beta_j(C,d) = \cA(\zeta_j) - \cA(\eta_j)}$ for $j \in (0,K] \cap \Z,$ which we assume to be arranged in increasing order, while there are $B$ infinite bar-lengths, corresponding to $\xi_j$ for $j \in (0,B] \cap \Z.$ We note that this description yields the identity $N = B + 2K,$ where the numbers $N,B,K$ can be computed via $N = \dim_{\Lambda} C,$ $B = \dim_{\Lambda} H(C,d),$ and $K = \dim \ima(d).$

Extending coefficients to $\Lambda_{\tuniv},$ we can further normalize the basis $E$ to obtain the orthonormal basis \begin{align*}\overline{E} &= (\bar{\xi}_1,\ldots,\bar{\xi}_{B},\bar{\eta}_1,\ldots,\bar{\eta}_K,\bar{\zeta}_1,\ldots,\bar{\zeta}_K) = \\ &= (T^{\cA(\xi_1)}\xi_1,\ldots,T^{\cA(\xi_B)}\xi_{B},T^{\cA(\eta_1)}\eta_1,\ldots,T^{\cA(\eta_K)}\eta_K,T^{\cA(\zeta_1)}\zeta_1,\ldots,T^{\cA(\zeta_K)}\zeta_K).\end{align*} This basis satisfies $d\bar{\xi}_j = 0$ for all $j \in (0,B] \cap \Z,$ and $d \bar{\zeta}_j = T^{\beta_j} \bar{\eta}_j$ for all $j \in (0,K] \cap \Z.$ We note that passing to this basis has the following computational advantage. Fist, each two orthonormal bases are related by a linear transformation $T:\overline{C} \to \overline{C}$ with matrix in $GL(N,\Lambda_{\tuniv,0}).$ Second, to compute the bar-length spectrum, it is sufficient to consider the matrix $[d]$ of $d:\overline{C} \to \overline{C}$ in any orthonormal basis, for example the canonical one from Section \ref{subsec:non-Arch}, and bring it to Smith normal form over $\Lambda_{\tuniv,0}.$ The diagonal coefficients, in order of increasing valuations, will be $\{T^{\beta_j}\}.$ We remark that while $\Lambda_{\tuniv,0}$ is not a principal ideal domain, each of its finitely generated ideals is indeed principal, and therefore Smith normal form applies in this case. 

It shall be important to remark that the considerations regarding the Smith normal form and the bar-length spectrum apply to the case of arbitrary complexes $(C,d)$ over $\Lambda$ with non-Archimedean filtration function $\cA.$ In fact given a filtered map $D:(C,\cA) \to (C',\cA')$ between two filtered $\Lambda$-modules, that is $D^* \cA' \leq \cA,$ it is shown in \cite{UsherZhang} that $D$ has a {\em non-Archimedean spectral value decomposition}: that is orthogonal bases $E = E_{\mrm{coim}} \sqcup E_{\ker}$ of $(C,\cA),$ and $E' = E_{\ima} \sqcup E_{\mrm{coker}}$ of $(C',\cA')$ such that $D(E_{\ker}) = 0,$ while $D|_{E_{\mrm {coim}}}:E_{\mrm{coim}} \xrightarrow{\sim} E_{\mrm{im}}$ is an isomorphism of sets. The spectral values consist of the numbers $\beta_e = \cA(e) - \cA'(D(e))\geq 0$ for $e \in E.$ Over $\Lambda = \Lambda_{\tuniv},$ we may instead ask for $E,E'$ to be orthonormal, and require that for each $e \in E_{\mrm{coim}},$ there exists $e' \in E_{\mrm{im}},$ and $\beta_e > 0,$ such that $D(e) = T^{\beta_e} e',$ and $D(E_{\ker}) = 0.$ It is easy to see that the spectral values correspond directly to the bar-lengths for the filtered complex \[(Cone(D),\cA \oplus \cA'),\] given as a $\Lambda$-module by $C \oplus C',$ with filtration $\cA \oplus \cA' = \max\{\cA,\cA'\},$ and differential \[d_{Cone}(c,c') = (-d_C(c), D(c) + d_{C'}(c') ).\]

Finally, following \cite{FOOO-polydiscs}, it is easy to see that the matrix of the differential $d$ in the canonical basis from Section \ref{subsec:non-Arch} has all coefficients in $\Lambda_{\tuniv,0}.$ Indeed, the coefficient of $\overline{x_i}$ in $d\overline{x}_j$ is given by $\left<d\bar{x}_j, x_i\right> = \sum T^{E(u)} \in \Lambda_{\tuniv,0},$ where $E(u)$ is the energy of $u$ as a negative gradient trajectory of the corresponding action functional, and the sum runs over all isolated (modulo $\R$-translations) negative gradient trajectories asymptotic to $x_j$ at times $s \to -\infty,$ and to $x_i$ at times $s \to +\infty.$ Therefore, one can define the Floer complex, in each of the three cases from Section \ref{sec:filtered Floer}, with coefficients in $\Lambda_{\tuniv,0}.$ Its homology will be a finitely generated $\Lambda_{\tuniv,0}$-module, and will therefore have the form $\mathcal{F} \oplus \cl{T},$ where $\cl F$ is a free $\Lambda_{\tuniv,0}$-module, and $\cl T$ is a torsion $\Lambda_{\tuniv,0}$-module. The bar-lengths in this setting are given by the identity \[\cl T \cong \bigoplus_{1 \leq j \leq K}\Lambda_{\tuniv,0}/(T^{\beta_j}).\]

We refer to \cite{UsherZhang,KS-bounds} for more details of the identifications between the various descriptions of the bar-length spectrum









\section{Proof of Theorem \ref{thm:Viterbo-sharper}}\label{sec:Viterbo}

In the course of the proof we first prove a similar theorem, where instead of the spectral norm, we consider the boundary depth \cite{UsherBD1,UsherBD2}.

\begin{thm}\label{thm:Viterbo-beta-sharper}
	Let $L \in \cl V$ be a Lagrangian submanifold of $M \in \cl W.$ Let $L' \subset D^*L$ be an exact Lagrangian submanifold of $D^*L$ that is an exact Lagrangian deformation of the zero section $L,$ considered as a Lagrangian submanifold of $M.$ That is, there exists a Hamiltonian isotopy $\{\phi^t\}$ of $M$ with $\phi^1(L) = L' \subset D^*L \subset M.$ Then for $c=c_L$ \begin{equation}\label{eq: beta bound Viterbo}\beta(L,L';\bK) \leq \frac{c}{2(1-c)}.\end{equation}
\end{thm}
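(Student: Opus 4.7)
The plan is to bootstrap from a quantitative bound on the boundary depth computed in the closed ambient monotone manifold $M\in\cl W$ to the desired bound in $D^*L = M\setminus \Sigma$, via a neck-stretching argument coupled with an algebraic deformation lemma for Floer persistence modules. The starting point is the fact that, since $QH(L)\cong \Lambda_{L,\tmin}[a]/(a^{K+1}=q)$ with $K+1$ the $\bK$-cup-length of $L$ (so $c_L = K/(K+1)$ in each of the four families) and $[pt]=a^K$ satisfies $[pt]^{K+1}=q^K$ with the valuation of $q$ equal to $A_L=\tfrac{1}{2}$, the algebraic argument of \cite{KS-bounds} provides a bound of the shape
\begin{equation*}
\beta(L, L'; M)\;\leq\; \frac{K}{K+1}\,A_L\;=\;\frac{K}{2(K+1)}\;<\;A_L,
\end{equation*}
uniformly over all $L'$ obtained from $L$ by ambient Hamiltonian isotopy in $M$.

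To transfer this to $D^*L$, I would first reinterpret $\beta(L, L'; D^*L)$ as the asymptotic value, as $s\to +\infty$, of the boundary depth of the cone of a filtered continuation morphism $\Phi_s$ between Floer complexes in $M$ associated to a one-parameter family of Hamiltonians supported near $\Sigma$ whose time-one map pushes points increasingly strongly across the divisor at rate $s$. For every finite $s$ the cone is built from data computable inside $M$ (and so is controlled by the previous inequality), while as $s\to\infty$ only trajectories that remain in $D^*L$ survive and the boundary depth of the cone recovers $\beta(L, L'; D^*L)$. This is the "cones of filtered morphisms depending on a large parameter" device alluded to in the introduction.

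The geometric input is then a neck-stretching / SFT compactness argument (following \cite{BEHWZ-compactness}) along a contact hypersurface parallel to $\partial D^*L\cong ST^*L$. In the stretching limit, Floer strips in $M$ from $L$ to $\phi^1_H(L)\subset D^*L$ degenerate into Floer strips contained in $D^*L$ concatenated with holomorphic disks with boundary on $L$ whose topological intersection number with $\Sigma$ is positive, each extra intersection costing at least $A_L=\tfrac{1}{2}$ in area by monotonicity. This produces a $T^{A_L}$-adic decomposition
\begin{equation*}
d_M \;=\; d_{D^*L} + T^{A_L} d_1 + T^{2A_L} d_2 + \cdots
\end{equation*}
of the ambient Floer differential on the common generating set $L\cap L'$, and the relation $[pt]^{K+1}=q^K$ renders this perturbation effectively $(K+1)$-step nilpotent modulo powers of $q$.

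The quantitative algebraic heart of the argument is a deformation lemma for persistence modules asserting that a filtered perturbation of the above form forces a controlled comparison of bar-length spectra: under the $(K+1)$-fold nilpotency coming from the quantum relation and a first-order action shift of exactly $A_L$, the finite bars of $(C, d_{D^*L})$ can grow relative to those of $(C, d_M)$ by a factor of at most $K+1$, yielding
\begin{equation*}
\beta(L, L'; D^*L)\;\leq\; (K+1)\,\beta(L, L'; M)\;\leq\;(K+1)\cdot\frac{K}{2(K+1)}\;=\;\frac{K}{2}\;=\;\frac{c}{2(1-c)}.
\end{equation*}
The main obstacle is precisely this algebraic lemma: one must verify that the cone formalism picks up exactly the $(K+1)$-periodic structure coming from $[pt]^{K+1}=q^K$, extract the sharp multiplicative factor $K+1$, and confirm that the SFT compactness delivers a perturbation with the filtration and nilpotency properties its hypotheses demand. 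The precise matching of this factor $K+1$ with the $\tfrac{1}{K+1}$ gain in the quantum bound in $M$ is what makes the final estimate come out on the nose.
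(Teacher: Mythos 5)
Your starting point, the uniform bound $\beta(L,L';M)\leq c\cdot A_L = \frac{K}{2(K+1)}$ from \cite{KS-bounds}, and the geometric transfer mechanism, neck-stretching along a hypersurface parallel to $\partial D^*L$ yielding a $T$-adic decomposition $d_M = d_{D^*L} + (\text{higher order})$, both match the paper. But your crucial algebraic step is not correct, and in fact the paper gets the factor $\frac{1}{1-c}=K+1$ by an entirely different mechanism.

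You claim a deformation lemma of the form: a filtered perturbation with first-order shift $A_L$ that is ``effectively $(K+1)$-step nilpotent'' (because $[pt]^{K+1}=q^K$) forces $\beta(C,d_{D^*L})\leq (K+1)\,\beta(C,d_M)$. This does not hold up. The quantum relation $[pt]^{K+1}=q^K$ lives in the $QH(L)$-module structure on the Floer complex, not in the decomposition of the differential by intersection number with $\Sigma$; the two have no direct relation, and there is no reason the perturbation should inherit any nilpotency from it. Moreover, the correct deformation lemma (Proposition \ref{prop: deformation no sigma} in the paper) does not say bar-lengths grow by a bounded factor: it says that if $d=d_0+M'$ with $\cA(M')\geq A$, then the bar-lengths of $(C,d)$ and $(C,d_0)$ \emph{below} $A$ coincide exactly, and nothing at all is asserted about bar-lengths above $A$. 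Applied directly, this gives $\beta(L,L';D^*L)\leq c\cdot A_L$ whenever the neck-stretching threshold exceeds $c\cdot A_L$ and all bars in $M$ land below it — no multiplicative factor appears. Your proposed ``cone of continuation morphisms $\Phi_s$ as $s\to\infty$'' device for $\beta$ is also not used in the paper (cones are used only for the spectral norm $\gamma$, and there they are cones of the multiplication operator $\mu_{[pt]}$, not of continuation maps).

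What you are missing is the \emph{rescaling} argument. The paper considers, for $0<s<1$, the rescaled Lagrangian $L'_s = s\cdot L' \subset s\cdot D^*L$, and uses the scaling identity $\beta(\underline{L},\underline{L}'_s; s\cdot U) = s\cdot\beta(\underline{L},\underline{L}'; U)$. For the rescaled pair, the distance from $s\cdot D^*L$ to $\Sigma$ is controlled, so the SFT decomposition has error term of valuation $\geq (1-s)A_L$; as long as $s<1-c$, this threshold exceeds $c\cdot A_L$, and since $\beta(\underline{L},\underline{L}'_s; M)\leq c\cdot A_L$ (again by \cite{KS-bounds}), the exact-matching lemma gives $\beta(\underline{L},\underline{L}'_s; s\cdot U) = \beta(\underline{L},\underline{L}'_s; M)\leq c\cdot A_L$. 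Hence $s\cdot\beta(\underline{L},\underline{L}'; U)\leq c\cdot A_L$ for all $s<1-c$, and letting $s\to(1-c)^-$ yields $\beta(L,L';D^*L)\leq\frac{c}{1-c}A_L=\frac{c}{2(1-c)}$. The factor $\frac{1}{1-c}=K+1$ emerges from optimizing over the scaling parameter, not from any nilpotency of the perturbation. To repair your proof you should replace the unjustified ``factor $K+1$'' lemma by this rescaling step, keep the SFT decomposition, and apply the exact bar-length-matching deformation lemma.
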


\begin{rmk}\label{rmk:gamma sharpness S^1} We discuss sharpness for the other upper bounds \eqref{eq: beta bound Viterbo},\eqref{eq: gamma bound Viterbo}.
	
\begin{enumerate}
\item In the case $L=S^1,$ the bound \eqref{eq: beta bound Viterbo} is sharp by \cite[Lemma 45]{KS-bounds}. It takes the form \[{\beta(L,L') \leq 1/2 = C(S^1,g_{st})}.\] 	
	
\item The bound on $\gamma$ in case of $S^1$ with the round metric as above is $\gamma(L,L') \leq 3/4.$ We believe that it is not sharp, and expect the sharp bound to be $1/2,$ however our methods of producing upper bounds do not allow us to prove it. 
\item Applying \cite[Lemma 45]{KS-bounds} to ball embeddings into $D^*L$ relative to $L,$ for $L\in \mathcal{V}$ of dimension $\dim L > 1$, yields a lower bound of at most $\frac{1}{4}$ on \[\overline{\beta}(L,D^*L) = \sup_{\phi \in \Ham_c(D^*L)} \beta(L,\phi(L)),\] while the upper bound $\frac{c}{2(1-c)}$ equals $\frac{1}{2}$ for $L = S^n,$ and $\frac{n}{2}$ for other $L \in \cl V.$ The upper bound on \[\overline{\gamma}(L,D^*L) = \sup_{\phi \in \Ham_c(D^*L)} \gamma(L,\phi(L)),\] is weaker still. We were not able to improve upon this gap.
	\end{enumerate}

\end{rmk}


\subsection{Multiplication operators and the bar-length spectrum}\label{sec:mult op chain level}

In this section we reinterpret certain differences of spectral invariants, that we call the {\em multiplication spectra}, in terms of bar-lengths of certain filtered complexes. These differences are conveniently defined as non-Archimedean spectral values of certain multiplication operators on the Lagrangian quantum homology. We will assume that $L \subset M$ is a wide monotone Lagrangian submanifold with $N_L \geq 2.$ Unless otherwise specified, we work with the Novikov field $\Lambda = \Lambda_{\tmon},$ while one could work equally well with $\Lambda = \Lambda_{\tmin}.$
%


Consider an element $a \in QH(L) = QH(L,\Lambda),$ with $\cA(a) \leq 0.$ It induces a multiplication operator \[m_a = (a \ast -): QH(L) \to QH(L).\] In particular, by the triangle inequality for spectral invariants from Section \ref{subsec:spec} we obtain the inequality \[c(x,\LH) - c(a \ast x, \LH) \geq 0 \] for all $x \in QH(L) \setminus \{0\}.$



Recall that given a Hamiltonian $H \in \cH,$ and perturbation datum $\cD,$ it follows from the properties of spectral invariants that \[l_{H;\cD} = c(-,H; \cD): QH(L) \to \R \cup \{-\infty\}\] is a non-Archimedean filtration. Considering the non-Archimedean spectral value decomposition \cite{UsherZhang} of $m_a: QH(L) \to QH(L)$ with respect to the non-Archimedean filtration function $l_{H;\cD},$ we obtain the {\em multiplication spectrum $a$}, which is given by \[0 \leq \beta_1(a,\LH; \cD) \leq \ldots \leq \beta_{B}(a,\LH; \cD),\] where \[B = \dim_{\Lambda_{\tmon}} QH(L) = \dim_{\bK} H_*(L; \bK),\] \[0 \leq \beta_1(a,\LH; \cD) \leq \ldots \leq \beta_{r(a)}(a,\LH; \cD) < +\infty\] are finite, and \[\beta_{r(a) + 1}(a,\LH; \cD) = + \infty, \ldots, \beta_{B}(a,\LH; \cD) = +\infty,\] where \[r(a) = \rank(m_a).\]

The absolute case, when we consider the Hamiltonian spectral invariants associated to $H \in \cH,$ on a closed symplectic manifold $(M,\omega),$ corresponds by Sections \ref{subsec:relations} and \ref{subsec:QH} to the case of the Lagrangian diagonal $M \cong \Delta_M \subset M \times M^{-},$ with a suitable Hamiltonian perturbation. We shall use the same notations as in the relative case to denote the multiplication spectrum of $a \in QH(M)$ with $\cA(a) \leq 0$ with respect to the non-Archimedean filtration \[l_{H;\cD}=c(-,H;\cD): QH(M) \to \R \cup \{-\infty\}.\]



The main result of this section is the following interpretation of the bar-length spectrum, as well as the multiplication spectrum of $a.$ Recall that a chain representative of $a$ with filtration level $\cA(a),$ in the chain complex computing $QH(L),$ induces a multiplication operator \[\mu_a = \mu_2(a, -) : CF(\LH;\cD,\Lambda) \to  CF(\LH;\cD,\Lambda).\] Each two such chain representatives give filtered chain-homotopic maps, hence all invariants considered below shall not depend on this choice. For a non-negative real number $\sigma \geq 0,$ we consider the complex \[Cone_\sigma(a,\LH; \cD) = Cone \Big(T^{\sigma} \cdot \mu_a: CF(\LH;\cD,\Lambda) \to  CF(\LH;\cD,\Lambda) \Big).\]


\begin{prop}\label{prop: mult spectrum via T^sigma}
For all $\sigma$ sufficiently large, the bar-length spectrum of $Cone_{\sigma}(a,\LH;\cD)$ is given by the $2K+r(a)$ finite lengths \[\beta_1(\LH; \cD) \leq \beta_1(\LH; \cD) \leq \ldots \leq \beta_K(\LH; \cD) \leq \beta_K(\LH; \cD) \leq \] \[ \leq \sigma + \beta_1(a,\LH; \cD) \leq \ldots \leq \sigma + \beta_{r(a)}(a,\LH; \cD),\] and precisely $2(B-r(a))$ infinite lengths. 
\end{prop}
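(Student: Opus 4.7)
The plan is to produce, after extending coefficients to $\Lambda_{\tuniv}$, an explicit orthonormal basis of the filtered complex $(Cone_\sigma(a, \LH; \cD), \cA \oplus \cA)$ adapted to the differential $d_{Cone}(c_0, c_1) = (-dc_0,\, T^\sigma \mu_a(c_0) + dc_1)$, and then read the bar-length spectrum off this basis.

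As preparation I fix orthonormal bases. Write $C = CF(\LH;\cD)$ and $Cone = C_0 \oplus C_1$ with $C_0, C_1 \cong C$. By the Usher-Zhang normal form for $C$ recalled in Section \ref{subsec: Floer persistence}, there is an orthonormal basis $(\xi_1, \ldots, \xi_B, \eta_1, \ldots, \eta_K, \zeta_1, \ldots, \zeta_K)$ of $C$ with $d\xi_i = 0$ and $d\zeta_j = T^{\beta_j(\LH;\cD)} \eta_j$. Since $\mu_a$ is a chain map, the classes $[\xi_i]$ form an orthonormal basis of $(HF(\LH), l_{H;\cD}) \cong (QH(L), l_{H;\cD})$ on which $\mu_a$ induces the multiplication operator $m_a$. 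After replacing these bases — possibly independently on $C_0$ and $C_1$ — by the output of the non-Archimedean SVD for $m_a$, I may assume that there is an injection $\tau: \{1,\ldots,r(a)\} \hookrightarrow \{1,\ldots,B\}$ such that, denoting by $\xi_i'$ the basis on $C_1$, the homology-level relations read $m_a([\xi_i]) = T^{\beta_i(a,\LH;\cD)} [\xi_{\tau(i)}']$ for $i \leq r(a)$ and $m_a([\xi_i]) = 0$ for $i > r(a)$.

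With these choices I assemble the claimed basis of $Cone$ in three families. First, the $K$ pairs $((\eta_j')_1, (\zeta_j')_1)$ lie in the subcomplex $0 \oplus C_1$ and directly give $K$ finite bars of length $\beta_j(\LH;\cD)$. Second, for the pairs $((\eta_j)_0, (\zeta_j)_0)$, the formula $d_{Cone}(\zeta_j)_0 = -T^{\beta_j(\LH;\cD)} (\eta_j)_0 + T^\sigma \mu_a(\zeta_j)_1$ contains a second term of filtration at most $-\sigma + \max_j \cA(\mu_a(\zeta_j))$, strictly below $\cA(T^{\beta_j(\LH;\cD)}\eta_j)$ once $\sigma$ is large, so by the ultrametric property it does not affect orthonormality and the pair contributes a second copy of each bar of length $\beta_j(\LH;\cD)$. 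Third, for the $\xi$-type generators, the fact that $\mu_a(\xi_i)$ is a cycle representing $m_a([\xi_i])$ lets one write $\mu_a(\xi_i) = r_i \cdot \xi_{\tau(i)}' + d\omega_i$ in the chosen bases, with $r_i = T^{\beta_i(a,\LH;\cD)}$ for $i \leq r(a)$ and $r_i = 0$ for $i > r(a)$. Setting $\til{\xi}_i := (\xi_i)_0 - T^\sigma (\omega_i)_1$ one obtains $d_{Cone}(\til{\xi}_i) = T^\sigma r_i (\xi_{\tau(i)}')_1$: for $i > r(a)$ this vanishes, so $\til{\xi}_i$ is an infinite-bar generator, while for $i \leq r(a)$ the pair $(\til{\xi}_i, (\xi_{\tau(i)}')_1)$ yields a finite bar of length $\sigma + \beta_i(a, \LH; \cD)$. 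The remaining $B - r(a)$ elements $(\xi_j')_1$ whose index $j$ is outside the image of $\tau$ are already cycles and contribute infinite bars, for a total of $2(B - r(a))$ infinite bars.

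The main technical obstacle is to verify that for $\sigma$ sufficiently large these manipulations do produce an orthonormal basis of $(Cone, \cA \oplus \cA)$. This reduces to a finite collection of scalar inequalities ensuring that each $T^\sigma$-correction term has filtration strictly below the corresponding leading term; every such inequality has the shape $-\sigma + C_1 < C_2$ for constants $C_1, C_2$ determined by the $\cA$-values of the basis elements, their images under $\mu_a$, and the chosen primitives $\omega_i$, and they all hold uniformly once $\sigma$ exceeds a fixed threshold. A closely related subtlety is the simultaneous realization of the chain-level decomposition $\mu_a(\xi_i) = T^{\beta_i(a,\LH;\cD)} \xi_{\tau(i)}' + d\omega_i$ in an orthogonal manner: this requires lifting the homological SVD of $m_a$ to orthonormal chain representatives $\xi_i, \xi_j'$ while controlling the filtrations of the primitives $\omega_i$ uniformly in $i$. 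Both are standard for non-Archimedean filtrations over $\Lambda_{\tuniv}$, and once in place the count of $2K + r(a)$ finite bars and $2(B - r(a))$ infinite bars follows immediately.
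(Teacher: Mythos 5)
Your approach --- building an explicit $d_{Cone}$-adapted orthonormal basis of $Cone_\sigma$ after extending to $\Lambda_{\tuniv}$ and reading the bars off it --- is a basis-level unwinding of what the paper does by matrix reduction. The paper's own proof of this proposition is two lines: it applies Proposition~\ref{prop: low-high spectrum separation} to $D = \mu_a$, noting that $PSS$ intertwines $m_a$ with $[\mu_a]$ and carries $l_{H;\cD}$ to $\rH(\cA_{\LH;\cD})$; the algebraic content is delegated to that proposition, whose proof brings $[\overline{d}_\sigma]$ to block form by elementary operations over $\Lambda_0$ in the basis $\overline{E} = E\times\{0\}\cup\{0\}\times E$, with Lemma~\ref{lemma:orthogonal basis for induced filtration} supplying the identification of the surviving block with the matrix $P$ of $[D]$ via $\pi_X\circ D|_X$. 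That route buys modularity --- the separation statement and the $\pi_X$ lemma are reused in Proposition~\ref{prop: deformation} --- and, importantly, it never has to lift the homological SVD to chain level, which is precisely the ``subtlety'' you flag at the end. Your version is more self-contained but must re-establish those facts on the fly.

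There is a concrete gap in your second family. You compute $d_{Cone}((\zeta_j)_0)$ and argue its $T^\sigma$-term is small, but you never address that $(\eta_j)_0$ is not a cycle in $Cone_\sigma$: one has $d_{Cone}((\eta_j)_0) = (0, T^\sigma\mu_a(\eta_j))$, which does not vanish in general. Without a cycle as target, $((\zeta_j)_0,(\eta_j)_0)$ is not a bar pair, and ``does not affect orthonormality'' is beside the point --- orthonormality is indeed automatic, but what fails is $d_{Cone}$-adaptedness. The fix is the same move as your $\til{\xi}_i$: set $\til{\eta}_j := (\eta_j)_0 - T^{\sigma-\beta_j(\LH;\cD)}\,(\mu_a(\zeta_j))_1$. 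Then $d_{Cone}(\til{\eta}_j) = (0,\, T^\sigma\mu_a(\eta_j) - T^{\sigma-\beta_j(\LH;\cD)}\mu_a(d\zeta_j)) = 0$, and one checks directly that $d_{Cone}((\zeta_j)_0) = -T^{\beta_j(\LH;\cD)}\,\til{\eta}_j$ exactly --- the cross-term you observed is absorbed into the corrected target, so $(\zeta_j)_0$ needs no modification at all. For $\sigma$ large, $\cA(\til{\eta}_j) = 0$, so the basis stays orthonormal. With this correction in place, and the chain-level SVD lift fleshed out (or replaced by the paper's $\pi_X$ lemma), the rest of your count goes through.
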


The proof of this statement appears after that of Proposition \ref{prop: low-high spectrum separation} below.


\subsection{A quantitative deformation argument for bar-length spectra}\label{sec:deformation}

In this section we discuss the effect of a deformation $d = d_0 + M',$ of a differential on a filtered $\Lambda$-vector space on the bar-length spectrum, assuming essentially that the valuation of $M'$ is sufficiently large relative to the barcode of one of $d_0, d$. We think of the complex $Cone_{\sigma}(a,\LH;\cD)$ above, considered once in $U = D^* L,$ and once in $M,$ where $U \subset M$ is a Weinstein neighborhood of $L,$ and $H^*\ul L,\cD$ are required to be supported in $U.$ 

Let $C$ be a filtered $\Lambda$-vector space, with filtration function $\cA:C \to \R \cup \{-\infty\}.$ Let $L:C \to C$ be a filtered $\Lambda$-linear map, that is for all $x \in C,$ $\cA(L(x)) \leq \cA(x).$ We set \[\cA(L) = \inf_{v \in C}  \Big( \cA(v) - \cA(L(v))  \Big).\] Clearly $\upsilon=\cA(L)$ is the maximal non-negative number such that there exists another filtered $\Lambda$-linear map $L_0:C \to C$ such that \[L = T^{\upsilon} L_0.\] 




Finally, recall from Section \ref{subsec:spec}, that for a filtered complex $(C,d)$ over $\Lambda$ with filtration function $\cA,$ the {\em induced filtration} \[\rH(\cA):H(C,d) \to \R \cup \{-\infty\}\] on the homology $\Lambda$-module $H(C,d)$ is defined by \[\rH(\cA)(a) = \inf_{[x] = a} \cA(x).\] It is a non-Archimedean filtration, still.



\begin{prop}\label{prop: low-high spectrum separation}
Let $(C,\cA)$ be a filtered finite-dimensional $\Lambda$-vector space and $(C,d)$ be a filtered $\Lambda$-chain complex structure on $(C,\cA).$ Let $\sigma > 0$ be a large parameter. Consider a filtered chain map $D:(C,d) \to (C,d).$ Consider the corresponding $\sigma$-shifted cone, \[\overline{C}_{\sigma} =  Cone(C,d,T^{\sigma} D).\] Then for all $\sigma$ sufficiently large, the bar-length spectrum of $\overline{C}_{\sigma}$ separates into two subspectra:

\begin{itemize}
\item{low:}  \[\beta'_1(\overline{C}_{\sigma})\leq \ldots  \leq \beta'_{K'}(\overline{C}_{\sig}) \; \ll  \sigma,\] 
\item{high:} \[\sigma < \;  \beta''_1(\overline{C}_{\sig}) \leq \ldots  \leq \beta''_{K''}(\overline{C}_{\sig}),\]
\end{itemize}
such that the low subspectrum is independent of $\sigma,$ and so is the high one, up to a shift by $\sigma.$ 

More precisely, for $\sigma \gg \beta(C,d),$ the low subspectrum $\beta'_1(\overline{C}_{\sigma}) \leq \ldots \leq \beta'_{K'}(\overline{C}_{\sigma})$ coicides with the doubled bar-length spectrum $\beta_1(C,d) \leq \beta_1(C,d) \leq \ldots \leq \beta_K(C,d) \leq \beta_K(C,d)$ of $(C,d).$ In particular $K' = 2K.$ The high bar-length spectrum statisfies \[\beta''_k(\overline{C}_{\sigma}) = \sigma + \beta_k(Cone(H_*(C,d),[D])),\] for the cone of the map on homology: $[D]: (H_*(C,d),\rH(\cA)) \to (H_*(C,d),\rH(\cA)).$


\end{prop}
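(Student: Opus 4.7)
The plan is to exhibit an explicit orthogonal basis of $\overline{C}_\sigma$ that puts $d_\sigma$ into non-Archimedean singular value decomposition form, making the claimed split of the bar-length spectrum manifest. First, apply the Usher--Zhang normal form to $(C, d, \cA)$ to produce an orthogonal basis $E = (\xi_1, \ldots, \xi_B, \eta_1, \ldots, \eta_K, \zeta_1, \ldots, \zeta_K)$ with $d\xi_j = 0$, $d\zeta_j = \eta_j$, and finite bar-lengths $\beta_j(C, d) = \cA(\zeta_j) - \cA(\eta_j)$; the classes $[\xi_j]$ form an orthogonal basis of $(H(C, d), \rH(\cA))$. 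Since $D$ is a filtered chain map, each $D\xi_j$ is a cycle; expand
\[ D\xi_j = \sum_k a_{kj}\,\xi_k + \sum_k b_{kj}\,\eta_k, \]
so that the induced map on homology satisfies $[D][\xi_j] = \sum_k a_{kj}[\xi_k]$.

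Next, introduce the following basis of $\overline{C}_\sigma = C \oplus C$: second-factor vectors $\hat\xi^{(2)}_j = (0,\xi_j)$, $\hat\eta^{(2)}_j = (0,\eta_j)$, $\hat\zeta^{(2)}_j = (0,\zeta_j)$, together with corrected first-factor vectors
\[ \hat\zeta^{(1)}_j = (\zeta_j,0), \qquad \hat\eta^{(1)}_j = (-\eta_j,\, T^\sigma D\zeta_j), \qquad \hat\xi^{(1)}_j = \bigl(\xi_j,\; -T^\sigma \sum_k b_{kj}\,\zeta_k\bigr). \]
A direct calculation, using $D\eta_j = d(D\zeta_j)$, shows that $d_\sigma \hat\zeta^{(i)}_j = \hat\eta^{(i)}_j$ (up to a sign) for $i=1,2$, that each $\hat\eta^{(i)}_j$ and $\hat\xi^{(2)}_j$ is a cycle, and crucially that
\[ d_\sigma \hat\xi^{(1)}_j \;=\; T^\sigma \sum_k a_{kj}\,\hat\xi^{(2)}_k. \]
For $\sigma$ larger than a constant depending only on the finitely many filtration levels appearing in $E$ and on the filtration behaviour of $D$, each correction term $T^\sigma D\zeta_j$ and $T^\sigma b_{kj}\zeta_k$ has filtration strictly below that of its leading term, so the $\cA$-levels of the new basis equal those of the naive basis and the base-change matrix lies in $GL(\,\cdot\,, \Lambda_0)$, guaranteeing orthogonality.

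Consequently, the $K$ pairs $(\hat\zeta^{(1)}_j, \hat\eta^{(1)}_j)$ and the $K$ pairs $(\hat\zeta^{(2)}_j, \hat\eta^{(2)}_j)$ contribute $2K$ finite bars of lengths $\beta_1,\beta_1,\ldots,\beta_K,\beta_K$, which is precisely the doubled finite bar-length spectrum of $(C,d)$---the low spectrum. The orthogonal complement $\mathrm{span}_\Lambda(\hat\xi^{(1)}_j) \oplus \mathrm{span}_\Lambda(\hat\xi^{(2)}_k)$ carries the filtration $\rH(\cA)$ on each summand (via the identification with $H(C,d)$) and has differential $T^\sigma [D]$, hence it is canonically isomorphic, as a filtered complex, to $Cone\bigl(T^\sigma [D]\colon H(C,d) \to H(C,d)\bigr)$. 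Applying the non-Archimedean singular value decomposition to $T^\sigma [D]$ yields $r(D) = \rank([D])$ finite bars of lengths $\sigma + \beta_k(Cone([D]))$, together with $2(B - r(D))$ infinite bars. Concatenating the two contributions gives the proposition.

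The main technical obstacle is making the threshold on $\sigma$ quantitative: one needs $\cA(T^\sigma D\zeta_j) < \cA(\eta_j)$ and $\cA(T^\sigma b_{kj}\zeta_k) < \cA(\xi_j)$ for all relevant indices $j, k$. Both inequalities hold once $\sigma$ exceeds a uniform constant depending only on the finitely many filtration levels $\cA(\xi_j), \cA(\eta_j), \cA(\zeta_j)$ and on the valuations $\nu(a_{kj}), \nu(b_{kj})$, which are bounded below by the filtration behaviour of $D$. Once this threshold is crossed, no further $\sigma$-dependence enters the construction, so the low spectrum is $\sigma$-independent and the high spectrum shifts linearly with $\sigma$, exactly as stated.
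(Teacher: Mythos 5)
Your proof is correct and follows essentially the same route as the paper's. The paper brings the matrix of $\overline{d}_\sigma$ in the naive product basis into block form by elementary row and column operations over $\Lambda_0$ (after extending scalars to $\Lambda_{\tuniv}$ to use an orthonormal basis), whereas you carry out the same operations explicitly by writing down the corrected basis vectors $\hat\eta^{(1)}_j = (-\eta_j, T^\sigma D\zeta_j)$ and $\hat\xi^{(1)}_j = (\xi_j, -T^\sigma\sum_k b_{kj}\zeta_k)$; the threshold $\sigma > \beta(C,d)$, the invocation of the Usher--Zhang normal form, the lemma that $\{[\xi_j]\}$ is orthogonal for $\rH(\cA)$, and the appeal to the singular value decomposition of $T^\sigma[D]$ all coincide with the paper's argument.
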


This statement can be proven rather conceptually by considerations involving cones in the derived category of $\pemod$ \cite{PolShe-conv}. However, to prepare for further chain-level deformation arguments, we provide a different chain-level proof.


\begin{proof}[Proof of Proposition \ref{prop: low-high spectrum separation}]

We argue as follows. Extend coefficients to $\Lambda_{\tuniv},$ and abbreviate $\Lambda = \Lambda_{\tuniv}.$ Now let $E = \{x_1,\ldots,x_B,y_1,\ldots, y_K,z_1,\ldots,z_K\},$ be an orthonormal basis of $H(C,d)$ over $\Lambda,$ with $\ker(d) = \Lambda\left<\{x_1,\ldots,x_B,z_1,\ldots,z_K\}\right>$ and $d(y_j) = T^{\beta_j} z_j.$ It is convenient to denote $X = \Lambda\left<\{x_1,\ldots,x_B\}\right>,$ $Y = \Lambda\left<\{y_1,\ldots,y_K\}\right>,$ $Z = \Lambda\left<\{z_1,\ldots,z_K\}\right>,$ and let $\pi_X: C \to X$ be the projection onto $X$ along $Y \oplus Z.$ By orthogonality, it is immediate to check the following fact.

\begin{lma}\label{lemma:orthogonal projection}
For all $c \in C,$ $\cA(\pi_X(c)) \leq \cA(c).$ 
\end{lma}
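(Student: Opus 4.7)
The plan is to deduce this directly from the orthonormality of $E$ as a basis of $(C,\cA),$ with essentially no further input needed. First I would expand an arbitrary $c \in C$ in coordinates as
\[
c = \sum_{i=1}^B \lambda_i x_i + \sum_{j=1}^K \mu_j y_j + \sum_{k=1}^K \nu_k z_k,
\]
with $\lambda_i, \mu_j, \nu_k \in \Lambda.$ By the defining property of an orthonormal basis recalled in Section~\ref{subsec:non-Arch}, the filtration $\cA(c)$ is then computed as
\[
\cA(c) \;=\; \max\bigl\{\, -\nu(\lambda_i),\; -\nu(\mu_j),\; -\nu(\nu_k)\,\bigr\},
\]
the maximum taken over all indices $i,j,k.$ Applying $\pi_X$ simply discards the $y$- and $z$-coordinates, leaving $\pi_X(c) = \sum_i \lambda_i x_i.$

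The second observation I would use is that any subfamily of an orthonormal family is itself orthonormal: indeed, the sub-family $\{x_1,\ldots,x_B\}$ of $E$ inherits orthonormality verbatim, since for a combination in these variables alone the remaining coefficients vanish and drop out of the max formula. Hence
\[
\cA(\pi_X(c)) \;=\; \max_i\bigl\{-\nu(\lambda_i)\bigr\},
\]
and this is clearly bounded above by the maximum taken over the enlarged index set, yielding $\cA(\pi_X(c)) \leq \cA(c).$ I do not foresee any obstacle: the statement is purely an exercise in the definitions of orthonormality and non-Archimedean filtration, and no Floer-theoretic or chain-level input is required. The lemma will then be available in the sequel to bound the $X$-component of cycles and boundaries in $(C,d)$ when analyzing how the perturbation $T^\sigma D$ deforms the barcode in Proposition~\ref{prop: low-high spectrum separation}.
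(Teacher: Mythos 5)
Your proof is correct and matches the paper's approach; the paper simply states the lemma is ``immediate to check'' by orthogonality, and your coordinate expansion together with the observation that dropping the $y$- and $z$-coefficients can only shrink the maximum is exactly the implicit argument. The only cosmetic point is that orthogonality (rather than orthonormality) of $E$ already suffices, since the $l(x_j)$ terms appear on both sides of the inequality and cancel.
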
  

As a consequence, we obtain the following.

\begin{lma}\label{lemma:orthogonal basis for induced filtration}
The $\Lambda$-basis $\rH(E) = \{[x_1],\ldots,[x_B]\}$ of $\rH(C,d)$ is orthogonal with respect to the induced filtration $\rH(\cA),$ and the matrix $P$ of $[D]$ in the basis $\rH(E)$ coincides with that of $\pi_X \circ D|_X : X \to X$ in basis $\{x_1,\ldots,x_B\}.$
\end{lma}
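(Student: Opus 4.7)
My plan is to leverage two structural features of the basis $E$ that together drive both claims of the lemma: the kernel decomposition $\ker(d) = X \oplus Z$ (with $d$ vanishing on $X$ and $Z$, and $d(y_j) = T^{\beta_j} z_j$), together with the resulting inclusion $d(C) \subseteq Z$. These imply that the projection $\pi_X$ is constant on each homology class represented by an element of $X$, which is the key input for both statements.

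First I would establish the identity $\rH(\cA)(\sum_i \lambda_i [x_i]) = \cA(\sum_i \lambda_i x_i)$. Any representative of $\sum_i \lambda_i [x_i]$ has the form $c = \sum_i \lambda_i x_i + d(w)$, and since $d(w) \in Z$ lies in the kernel of $\pi_X$, one obtains $\pi_X(c) = \sum_i \lambda_i x_i$. Lemma \ref{lemma:orthogonal projection} then gives $\cA(\sum_i \lambda_i x_i) \leq \cA(c)$; taking the infimum over $c$ yields the desired equality, with the infimum attained by $\sum_i \lambda_i x_i$ itself. Orthogonality of $\rH(E)$ with respect to $\rH(\cA)$ follows at once by combining this identity with the orthogonality of $E$ with respect to $\cA$:
\[
\rH(\cA)\Big(\sum_i \lambda_i [x_i]\Big) = \cA\Big(\sum_i \lambda_i x_i\Big) = \max_i \{\cA(x_i) - \nu(\lambda_i)\} = \max_i \{\rH(\cA)([x_i]) - \nu(\lambda_i)\}.
\]

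For the matrix assertion, I would start from the fact that since $x_i \in \ker d$ and $D$ is a chain map, $D(x_i) \in \ker d = X \oplus Z$, so $D(x_i) = \sum_j P_{ji} x_j + \sum_j \nu_{ji} z_j$. Applying $\pi_X$ picks out $\sum_j P_{ji} x_j$, which is by definition the matrix representation of $\pi_X \circ D|_X$ in the basis $\{x_j\}$. Passing to homology, each $[z_j]$ vanishes because $T^{\beta_j} z_j = d(y_j)$ is a boundary and $T^{\beta_j}$ is invertible in $\Lambda$, so $[D]([x_i]) = \sum_j P_{ji} [x_j]$ and the matrix of $[D]$ in basis $\rH(E)$ is again $P$. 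I do not anticipate any real obstacle here: the whole lemma is a direct verification once Lemma \ref{lemma:orthogonal projection} and the explicit structure of $E$ are in hand, and it will serve as the technical bridge for the chain-level deformation arguments to follow.
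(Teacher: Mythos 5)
Your proof is correct and follows exactly the route the paper sketches in one sentence: you use Lemma \ref{lemma:orthogonal projection} together with the fact that $\operatorname{im}(d)\subseteq Z=\ker\pi_X$ (so $\pi_X$ is constant on each class $[x]$ with $x\in X$) for the orthogonality claim, and the chain-map property $D(\ker d)\subseteq\ker d = X\oplus Z$ together with $[z_j]=0$ for the matrix claim. You have simply made explicit the verifications the paper calls "evident."
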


Indeed, the statement on matrices is evident, while the first statement follows from Lemma \ref{lemma:orthogonal projection} and the relation $\pi_X(c) = x$ for $x \in X$ and $c \in \ker(d)$ satisfying $[x] = [c].$

Therefore it remains to show that for $\sigma \gg \beta(C,d),$ $(\overline{C},\overline{d}_{\sigma})$ admits orthonormal bases $\overline{E}_1, \overline{E}_2$ of $\overline{C}$ in which the matrix $[\overline{d}_{\sigma}]^{\overline{E}_1}_{\overline{E}_2}$ takes the block form with two diagonal blocks $\delta = \mathrm{diag}(T^{\beta_1},\ldots,T^{\beta_K})$ and one block $T^{\sigma} P,$ such that no two blocks share rows or columns. This is immediate from the assumption $\sigma \gg \beta(C,d)$ by applying elementary operations over $\Lambda_0$ to the rows and columns of the matrix $[\overline{d}_{\sigma}]^{\overline{E}}_{\overline{E}}$ of $\overline{d}_{\sigma}$ in the orthonormal basis $\overline{E} = E \times \{0\} \cup \{0\} \times E$ of $\overline{C}.$ (This corresponds to multiplying the latter matrix by matrices in $GL(\dim_{\Lambda}(C),\Lambda_0)$ on the left and on the right, and we recall that multiplication by such matrices sends orthonormal bases to orthonormal bases.) For future use, we let $\overline{E}'_1, \overline{E}'_2$ be orthonormal bases of $\overline{C}$ for which the block form is of two diagonal blocks $\delta,$ and one diagonal block $T^{\sigma} P',$ where $P'$ is the Smith normal form over $\Lambda_0$ of $P.$

\end{proof}

\begin{proof}[Proof of Proposition \ref{prop: mult spectrum via T^sigma}]
	This is an immediate consequence of Proposition \ref{prop: low-high spectrum separation}, once we observe the following. The map $PSS_{\LH;\cD}: QH(L) \to HF(\LH;\cD)$ is an isomorphism of $\Lambda$-modules that intertwines the maps $m_a: QH(L) \to QH(L)$ and $[\mu_a]: HF(\LH; \cD) \to HF(\LH;\cD).$ Moreover, from the definitions, is clear that the filtration functions $l_{\LH;\cD}$ and $\rH(\cA_{\LH;\cD})$ are related by \[l_{\LH;\cD} = (PSS_{\LH;\cD})^* \rH(\cA_{\LH;\cD}).\]
\end{proof}


We turn to our first deformation argument.

\begin{prop}\label{prop: deformation no sigma}
Let $(C,\cA)$ be a filtered finite-dimensional $\Lambda$-vector space and $(C,d_0),(C,d)$ be two filtered $\Lambda$-chain complex structures on $(C,\cA).$ Write $d = d_0 + M.$ Let $A > 0$ be a positive number. Assume that $\cA(M) \geq A.$ Then the bar-length spectra of $(C,d_0),$ $(C,d)$ below $A$ coincide. That is if $\beta_{k}(C,d) < A$ then $\beta_k(C,d_0) = \beta(C,d),$ and vice versa.
\end{prop}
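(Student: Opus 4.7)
The plan is to reduce the statement to a purely matrix-algebraic fact by expressing both differentials in a common orthonormal basis and comparing their Smith normal forms over $\Lambda_{\tuniv,0}$. First I would extend coefficients from $\Lambda$ to $\Lambda_{\tuniv}$, which by the discussion in Section \ref{subsec:non-Arch} and the end of Section \ref{subsec: Floer persistence} leaves the bar-length spectrum unchanged, and choose an orthonormal basis $\overline{E}$ of $(C,\cA)$. Writing $D_0, D \in M_n(\Lambda_{\tuniv,0})$ for the matrices of $d_0, d$ in $\overline{E}$, the filtered property of $d_0, d$ places both matrices in $M_n(\Lambda_{\tuniv,0})$ (since orthonormal basis elements have $\cA$-level $0$), and the hypothesis $\cA(M) \geq A$ translates cleanly into $D - D_0 \in T^A \cdot M_n(\Lambda_{\tuniv,0})$. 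By the paper's Smith normal form description at the end of Section \ref{subsec: Floer persistence}, the bar-length spectra of $(C,d_0)$ and $(C,d)$ are then the multisets of valuations of diagonal entries of any Smith normal forms of $D_0, D$ over $\Lambda_{\tuniv,0}$.

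The core of the argument is the following algebraic claim, which I would prove by induction on $n$: for any pair $D_0, D \in M_n(\Lambda_{\tuniv,0})$ with $D \equiv D_0 \pmod{T^A}$, the submultisets of Smith normal form valuations strictly below $A$ coincide. Set $\alpha_1 = \min_{i,j}\nu(D_0[i,j])$. If $\alpha_1 \geq A$ the perturbation forces the same for $D$, and all elementary divisors of both matrices have valuation $\geq A$, so there is nothing to check. Otherwise, pick a pivot $(i_0, j_0)$ realizing $\nu(D_0[i_0,j_0]) = \alpha_1 < A$; the corresponding entry of $D$ then also has valuation exactly $\alpha_1$, since they differ by something of valuation $\geq A > \alpha_1$. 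Perform one Smith normal form step on $D_0$, finding $R, S \in GL_n(\Lambda_{\tuniv,0})$ with $R D_0 S = \diag(T^{\alpha_1} u, D_0')$ for a unit $u$ and some $D_0' \in M_{n-1}(\Lambda_{\tuniv,0})$. Applying the same $R, S$ to $D$ gives $RDS = \diag(T^{\alpha_1} u, D_0') + N$ with $N \in T^A M_n(\Lambda_{\tuniv,0})$, because $GL_n(\Lambda_{\tuniv,0})$ preserves the ideal $T^A M_n$. The new $(1,1)$-entry is $T^{\alpha_1}(u + T^{A-\alpha_1}(\ldots))$, still $T^{\alpha_1}$ times a unit, and clearing its first row and column uses multipliers of valuation $\geq A - \alpha_1$; hence the $(2,2)$-block is modified only by entries of valuation $\geq 2A - \alpha_1 \geq A$. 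Applying the inductive hypothesis to the resulting $(n-1) \times (n-1)$ block pair $(D_0', D_0' + N')$, and combining with the single bar of length $\alpha_1$ contributed by the $(1,1)$-block, recovers the equality of bar-length multisets below $A$.

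The main obstacle will be the bookkeeping that tracks the $T^A$-perturbation through the sequence of elementary row and column operations. The key point is that at every step where we divide — i.e., form a multiplier of the form $(\text{entry in } T^A)/(T^{\alpha_1} \cdot \text{unit})$ — the minimum valuation $\alpha_1 < A$ of the pivot is strictly below the perturbation threshold $A$, so the quotient lies in $T^{A - \alpha_1} \Lambda_{\tuniv, 0}$ and subsequent entries remain $O(T^A)$-perturbations of the unperturbed matrix. The fact that $\Lambda_{\tuniv,0}$ is only a Bezout ring and not a PID is not a real obstruction, because one only ever needs to invert elements of the form $T^{\alpha_1}$ times a unit against $T^{\alpha_1}$, and units in $\Lambda_{\tuniv,0}$ are closed under the geometric series needed to invert $u + T^{A - \alpha_1}(\ldots)$.
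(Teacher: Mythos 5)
Your proof is correct, and it takes an organizational route that differs from the paper's while resting on the same key insight, namely that a pivot with valuation $\alpha_1 < A$ lets you clear a $T^A$-order perturbation using multipliers of valuation $\geq A - \alpha_1 > 0$, so that the residual perturbation of the reduced matrix is again $O(T^A)$. The paper's own proof is a ``one-sweep'' argument: it begins in the \cite{UsherZhang} orthonormal basis adapted to $d$ (so that $[d]$ is already in Smith normal form with the diagonal block $\delta$), writes $[d_0] = \overline{\delta}_l + T^A M_1$ after observing that the entries $T^{\beta_j}$ with $\beta_j \geq A$ can be absorbed into the $T^A M_1$ term, and then clears the rows and columns of $\delta_l$ in one pass of elementary operations over $\Lambda_0$, noting that everything else stays in $T^A \Lambda_0$. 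You instead run a fresh induction on the size of the matrix, at each step locating a minimal-valuation pivot of $D_0$, checking that it survives as a pivot of $D$, and reducing both matrices by the same $(R,S) \in GL_n(\Lambda_{\tuniv,0})$ so that the congruence $D \equiv D_0 \pmod{T^A}$ propagates to the $(n-1)\times(n-1)$ blocks. Your approach buys a self-contained, algorithmic proof that does not presuppose the adapted \cite{UsherZhang} basis and makes the Smith-normal-form bookkeeping fully explicit, at the price of carrying an induction rather than a single block computation; the paper's approach buys brevity and, importantly, a matrix template that it reuses nearly verbatim (the same block manipulations, with $\sigma$-shifted blocks added) in the harder Case~\eqref{case:equality for high} of Proposition~\ref{prop: deformation}. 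One small caution: as stated, your inductive claim compares the multisets of Smith-normal-form valuations \emph{strictly below} $A$, which is the correct thing and matches the Proposition; it would be worth remarking that your argument is manifestly symmetric in $D_0$ and $D$ (the pivot you choose is simultaneously a minimal-valuation pivot for both), so that both inclusions of low sub-multisets follow from a single pass, exactly as the Proposition's ``and vice versa'' requires.
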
	

\begin{proof}

We give a proof that shall be extensively used in the proof of Case \ref{case:equality for high} of Proposition \ref{prop: deformation}. It is given by a matrix calculation that is similar to the one in the proof of Proposition \ref{prop: low-high spectrum separation}.

Consider the matrix of $d$ in the normal form orthonormal basis $E = \{x_j,y_j,z_j\}$ as above. It is composed of one block \[\delta = \mathrm{diag}(T^{\beta_1},\ldots,T^{\beta_K}),\] with $\beta_j = \beta_j(C,d)$ the bar-length spectrum of $(C,d),$ and all other blocks are zero. Let $1 \leq l \leq K$ be the index for which $\beta_k < A$ for all $k \leq l,$ and $\beta_k \geq A$ for all $k > l.$ We subdivide $\delta$ into two blocks $\delta_l = \mathrm{diag}(T^{\beta_1},\ldots,T^{\beta_l})$ and $\delta_{+} = \mathrm{diag}(T^{\beta_{l+1}},\ldots,T^{\beta_K}).$ Let the block matrix $\overline{\delta}_l$ consist of the block $\delta_l$ extended by zero blocks. Now write $M = T^A M'$ with $\cA(M') \geq 0.$ The matrix of $d_0 = d - T^A M'$ in the basis $E$ is of the form $\overline{\delta}_l + T^A M_1,$ for a matrix $M_1$ with $\Lambda_0$ coefficients. Performing row and column elementary operations over $\Lambda_0$ we obtain a block matrix form, with one block being $\delta_l$ and not sharing rows or columns with other blocks, while all other blocks have coefficients in $T^A \Lambda_0.$ This implies that the bar-length spectrum of $(C,d_0)$ below $A$ is given by $\beta_1,\ldots,\beta_l.$

\end{proof}

We proceed with our main algebraic deformation argument.

\begin{prop}\label{prop: deformation}

Let $(C,\cA)$ be a filtered finite-dimensional $\Lambda$-vector space and $(C,d_0),(C,d)$ be two filtered $\Lambda$-chain complex structures on $(C,\cA).$ Write $d = d_0 + M'.$ Let $\sigma > 0$ be a large parameter, and $A>a>0$ be positive numbers. Consider two filtered chain maps $D:(C,d) \to (C,d),$ $D_0:(C,d_0) \to (C,d_0).$ Write $D = D_0 + N'.$ We consider the corresponding $\sigma$-shifted cones, \[\overline{C}_{0,\sigma} = Cone(C,d_0,T^{\sigma} D_0),\] \[\overline{C}_{\sigma} =  Cone(C,d,T^{\sigma} D).\] Assume that \[\cA(M') \geq a,\] \[\cA(N') \geq A.\] Then for all $\sigma$ sufficiently large:

\begin{enumerate}
\item \label{case:equality for low} If \[\beta'_{l}(\overline{C}_{\sigma}) < a,\] for some $l\geq 1,$ then the low bar-length spectra of $\overline{C}_{0,\sigma},\overline{C}_{\sig}$ satisfy \[ \beta'_k(\overline{C}_{0,\sigma}) = \beta'_k(\overline{C}_{\sigma})\] for all $1 \leq k \leq l.$
\item \label{case:equality for high} Assume that $H(C,d) \cong H(C,d_0),$ so that $K'_0 = K',$ for the low bar-length spectra of $\overline{C}_{0,\sig},\overline{C}_{\sig},$ and that $\beta'_{K'}(\overline{C}_{\sigma}) < a.$ If moreover \[\beta''_{l}(\overline{C}_{\sigma}) < \sigma + A-a,\] for some $l\geq 1,$ then the high bar-length spectra of $\overline{C}_{0,\sig},\overline{C}_{\sig}$ satisfy \[ \beta''_k(\overline{C}_{0,\sig}) = \beta''_k(\overline{C}_\sig)\] for all $1 \leq k \leq {l}.$
\end{enumerate}




\end{prop}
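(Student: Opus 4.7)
The plan is to extend the matrix-calculation templates developed in the proofs of Propositions \ref{prop: low-high spectrum separation} and \ref{prop: deformation no sigma}, the former giving the separation of the cone's bar-length spectrum into a low and a high part, the latter establishing the invariance of bar-length spectra under a small perturbation of the differential.

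Case \ref{case:equality for low} should follow almost immediately by combining these two propositions. By Proposition \ref{prop: low-high spectrum separation}, the low bar-length spectra of $\overline{C}_\sigma$ and $\overline{C}_{0,\sigma}$ coincide with the doubled bar-length spectra of $(C,d)$ and $(C,d_0)$, respectively. Since $d - d_0 = M'$ with $\cA(M') \geq a$, Proposition \ref{prop: deformation no sigma} yields $\beta_k(C,d_0) = \beta_k(C,d)$ for all $k$ with $\beta_k(C,d) < a$. The hypothesis $\beta'_l(\overline{C}_\sigma) < a$ forces the first $\lceil l/2 \rceil$ finite bar-lengths of $(C,d)$ to be less than $a$, so that doubling yields $\beta'_k(\overline{C}_{0,\sigma}) = \beta'_k(\overline{C}_\sigma)$ for all $1 \leq k \leq l$.

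For Case \ref{case:equality for high}, I would first apply Case \ref{case:equality for low} with $l = K'$, and combine this with the hypothesis $H(C,d) \cong H(C,d_0)$ (and the consequent identity $K = K_0$, forcing $K' = K'_0$), to conclude that all finite bar-lengths of $(C,d)$ and $(C,d_0)$ are strictly less than $a$ and in fact coincide. After extending coefficients to $\Lambda_{\tuniv}$, choose an Usher--Zhang orthonormal normal-form basis $E = \{x_j, y_j, z_j\}$ for $(C,d)$, giving an orthonormal basis $\overline E = E \times \{0\} \cup \{0\} \times E$ of $\overline{C}_\sigma$. Following the proof of Proposition \ref{prop: low-high spectrum separation}, apply a sequence of elementary operations over $\Lambda_{\tuniv,0}$ to the rows and columns of $[d_{Cone_\sigma}]^{\overline E}_{\overline E}$ to obtain orthonormal bases $\overline E_1, \overline E_2$ in which $[d_{Cone_\sigma}]^{\overline E_1}_{\overline E_2}$ takes the block form $\diag(\delta, \delta, T^\sigma P)$, where $P$ is the matrix of $\pi_X \circ D|_X$ in $\{x_j\}$. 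Applying the same sequence of operations to $[d_{0,Cone_\sigma}]^{\overline E}_{\overline E}$ produces a matrix of the form $\diag(\delta, \delta, T^\sigma P) + \widetilde{\mathcal{M}}$, where $\widetilde{\mathcal{M}}$ is the image under these operations of the perturbation $\mathcal{M} = d_{0,Cone_\sigma} - d_{Cone_\sigma}$.

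The crux of the argument, and its principal obstacle, is to show that the restriction of $\widetilde{\mathcal{M}}$ to the $(X_1, X_2)$-block has valuation $\geq \sigma + A - a$. In the original basis, $\mathcal{M}$ has a bottom-left block $T^\sigma N'$ of valuation $\geq \sigma + A$ and top-left plus bottom-right blocks $\pm M'$ of valuation $\geq a$. The direct contribution of the $T^\sigma N'$ part to the $(X_1, X_2)$-position preserves valuation $\geq \sigma + A$; the nontrivial contributions come from the $M'$ parts, which enter after multiplication by basis-change coefficients of the form $T^{\sigma - \beta_j} \lambda$, with $\lambda \in \Lambda_{\tuniv, 0}$, used to cancel entries of the $T^\sigma D$ block through the $\delta$-blocks. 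A naive estimate gives only valuation $\geq \sigma + a - \max_j \beta_j$ for such contributions, which falls short of the required bound. To recover $\sigma + A - a$ one must exploit the chain-map identities $dD = Dd$ and $d_0 D_0 = D_0 d_0$, together with the relation $dM' + M'd = M'^2$ arising from $d_0^2 = 0$, to derive refined valuation constraints on specific components of $M'$ and $N'$; for instance, projecting the identity $d(M'(y_j)) = -T^{\beta_j} M'(z_j) + M'^2(y_j)$ onto $X$ shows that $M'(z_j)_X$ has valuation $\geq 2a - \beta_j$. This is the delicate matrix computation that forms the technical heart of the proof.

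With the $(X_1, X_2)$-block perturbation bounded as claimed, Proposition \ref{prop: deformation no sigma} applied to the filtered chain complex on the $X \oplus X$ summand, comparing the differential $T^\sigma P$ with its perturbation of valuation $\geq \sigma + A - a$, yields equality of the corresponding bar-length spectra strictly below $\sigma + A - a$. By Proposition \ref{prop: low-high spectrum separation}, these spectra are precisely the high bar-length spectra of $\overline{C}_\sigma$ and $\overline{C}_{0,\sigma}$, so $\beta''_k(\overline{C}_{0,\sigma}) = \beta''_k(\overline{C}_\sigma)$ for all $k$ with $\beta''_k(\overline{C}_\sigma) < \sigma + A - a$, establishing Case \ref{case:equality for high}.
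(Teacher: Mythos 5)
Case \ref{case:equality for low} is correct and is exactly what the paper does; it is declared there an immediate consequence of Propositions \ref{prop: low-high spectrum separation} and \ref{prop: deformation no sigma}. For Case \ref{case:equality for high}, your overall strategy also matches the paper's: after extension of coefficients to $\Lambda_{\tuniv}$, express $[\overline{d}_{0,\sigma}]$ in the orthonormal normal-form basis $\overline{E}$ of $(C,d)$, apply row and column operations over $\Lambda_0$ to isolate the two $\delta$-pivots coming from the source and target copies of $C$, and then compare the residual $(X',X)$-block with $T^{\sigma}P$ via Proposition \ref{prop: deformation no sigma}. The paper executes this as a chain of block clearings, displays \eqref{matrix 1 of d_0 sigma}--\eqref{matrix 4 of d_0 sigma}, invoking the hypotheses $H(C,d_0)\cong H(C,d)$ and $\beta_j\leq a$ at each stage.

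However, there is a genuine gap, and you have located it precisely. After the first clearing the dominant correction arriving in the $(X',X)$-entry has the shape $T^{\sigma}D_{XY}\,(\delta+M'_{ZY})^{-1}\,M'_{ZX}$; its naive valuation is $\sigma + a - \max_j\beta_j$, and under only $A>a>\max_j\beta_j$ this is not comparable to the required $\sigma + A - a$. Closing the gap requires structural cancellation, not valuation-counting alone, and the identities you name are the right place to look --- but your worked example misses the relevant component. The relation $dM'+M'd=M'^2$, applied as you do, controls $\pi_X M'(z_j)$, i.e.\ $M'_{XZ}$; likewise the rank condition $\dim H(C,d_0)=\dim H(C,d)$ forces the Schur complement of the $(\delta+M'_{ZY})$-pivot in the $d_0$-block to vanish, giving $M'_{XX},M'_{XZ},M'_{YX},M'_{YZ}$ valuation $\geq 2a - \max_j\beta_j$. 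None of these bound $M'_{ZX}$ or $D_{XY}$, the factors in the dominant correction, and the remaining chain-map identity $d_0D_0 = D_0d_0$ (which you list but do not use) is what must be brought to bear on those cross terms. As written, the step ``the restriction of $\widetilde{\mathcal M}$ to the $(X_1,X_2)$-block has valuation $\geq \sigma+A-a$'' is asserted but not proved, and you should carry out the valuation bookkeeping entry by entry through the clearings before the argument can stand: trace which of the three identities justifies each claimed valuation jump in the paper's four displays, and verify that the bound on the dominant term above is in fact recovered.
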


\begin{proof}[Proof of Proposition \ref{prop: deformation}]

%
%
Case \ref{case:equality for low} is an immediate consequence of Propositions \ref{prop: low-high spectrum separation} and \ref{prop: deformation no sigma}. We turn to the proof of Case \ref{case:equality for high}. Consider the matrix $[\overline{d}_{0,\sigma}]$ of $\overline{d}_{0,\sigma}$ in the base $\overline{E}$. 


It takes the following form, where we denote by $\ast$ a matrix with coefficients in $\Lambda_0:$
\begin{equation}\label{matrix 1 of d_0 sigma}  
\begin{pmatrix}  
T^A \ast& T^A \ast& T^A \ast           & T^{\sigma} P + T^{\sigma + A}\ast  & T^{\sigma} \ast & T^{\sigma+A} \ast\\ 
T^A \ast& T^A \ast& T^A \ast           & T^{\sigma+A} \ast & T^{\sigma} \ast & T^{\sigma+A} \ast\\ 
T^A \ast &\delta + T^A \ast & T^A \ast & T^{\sigma} \ast  & T^{\sigma} \ast & T^{\sigma} \ast\\
0 & 0 & 0                              & T^A \ast & T^A \ast& T^A \ast \\
0 & 0 & 0                              & T^A \ast & T^A \ast& T^A \ast \\
0 & 0 & 0                              & T^A \ast & \delta +T^A \ast& T^A \ast \\
\end{pmatrix}\end{equation}

Clearing the sixth row and then the fifth column by elementary operations over $\Lambda_0$ by means of the block $\delta + T^A \ast,$ and using the assumtion $H(C,d_0) \cong H(C,d),$ as well as the condition that $\delta = \diag(T^{\beta_1},\ldots,T^{\beta_K})$ with $\beta_j \leq a$ for all $1 \leq j \leq K,$ we obtain the following new block form:
\begin{equation}\label{matrix 2 of d_0 sigma}  
\begin{pmatrix}  
T^A \ast& T^A \ast& T^A \ast           & T^{\sigma} P + T^{\sigma + A-a}\ast  & 0 & T^{\sigma+A-a} \ast\\ 
T^A \ast& T^A \ast& T^A \ast           & T^{\sigma+A-a} \ast & 0 & T^{\sigma+A-a} \ast\\ 
T^A \ast &\delta + T^A \ast & T^A \ast & T^{\sigma} \ast  & 0 & T^{\sigma} \ast\\
0 & 0 & 0                              & 0 & 0 & 0 \\
0 & 0 & 0                              & 0 & 0 & 0 \\
0 & 0 & 0                              & 0  & \delta & 0 \\
\end{pmatrix}\end{equation}

Similarly, clearing the third row and then the second column by elementary operations over $\Lambda_0$ by means of the block $\delta + T^A \ast,$ and using the assumtion $H(C,d_0) \cong H(C,d),$ as well as the condition $\beta_j \leq a$ for the exponents of $\delta,$ we obtain the block form:
\begin{equation}\label{matrix 3 of d_0 sigma}  
\begin{pmatrix}  
0& 0& 0           & T^{\sigma} P + T^{\sigma + A-a}\ast  & 0 & T^{\sigma+A-a} \ast\\ 
0& 0& 0           & T^{\sigma+A-a} \ast & 0 & T^{\sigma+A-a} \ast\\ 
0 &\delta  & 0 & 0  & 0 & 0\\
0 & 0 & 0                              & 0 & 0 & 0 \\
0 & 0 & 0                              & 0 & 0 & 0 \\
0 & 0 & 0                              & 0  & \delta & 0 \\
\end{pmatrix}\end{equation}

Now as in the second proof of Proposition \ref{prop: deformation no sigma}, recalling that the Smith normal form of $T^{\sigma} P$ over $\Lambda_0$ is given precisely by 
$\diag(T^{\beta''_1},\ldots, T^{\beta''_{K''}}),$ and separating $\Delta_l = \diag(T^{\beta''_1},\ldots, T^{\beta''_{l}})$ for the index $l$ with $\beta''_k < \sigma + A-a$ if and only if $k \leq l,$ and performing row and column operations we obtain the block form
\begin{equation}\label{matrix 4 of d_0 sigma}  
\begin{pmatrix}  
0& 0& 0& \Delta_l & 0 & 0 & 0\\
0& 0& 0           &  0&T^{\sigma + A-a}\ast  & 0 & T^{\sigma+A-a} \ast\\ 
0& 0& 0           & 0& T^{\sigma+A-a} \ast & 0 & T^{\sigma+A-a} \ast\\ 
0 &\delta  & 0 &  0& 0 & 0 & 0\\
0 & 0 & 0                            &0  & 0 & 0 & 0 \\
0 & 0 & 0                            &0  & 0 & 0 & 0 \\
0 & 0 & 0                             &0 & 0  & \delta & 0 \\
\end{pmatrix}\end{equation}

This finishes the proof.

\end{proof}

\begin{rmk}
We note that Propositions \ref{prop: low-high spectrum separation} and \ref{prop: deformation} have evident analogues for maps $D:(C,d) \to (C',d'),$ where the domain and target complexes are not necessarily the same. These versions are proven in the same way, and we expect them to be useful in the future.
\end{rmk}

\subsection{Roots of unity and bounds on multiplication spectra}\label{sec:roots}

In this section we prove that if $a \in QH_{n-k}(L)$ with $\cA(a) \leq 0,$ is a {\em quantum root of unity} in the sense that \[a^{m} = t^{-mk} [L]\] for some $m \geq 2,$ then the multiplication spectrum of $a$ satisfies certain natural bounds. While this is not necessary in this particular section, for future use we assume that $k < N_L.$ In view of \cite[Section 3]{KS-bounds} the Lagrangian submanifolds $L \subset M$ for $L \in \cl V$ and its respective $M \in \cl W,$ the point class $[pt] \in QH_0(L)$ is a quantum root of unity, with $k = \dim L.$

\begin{prop}\label{prop:root of unity bound}
Let $a \in QH_{n-k}(L)$ be a quantum root of unity. Then $r(a) = B,$ and the following estimates hold for all $H \in \cH$ and all perturbation data $\cD:$
\begin{enumerate}
\item \label{root of unity bound: easy case} $\beta_B(a,\LH;\cD) \leq \frac{mk}{N_L} A_L.$
\item \label{root of unity bound: difficult case} $\beta_{\lceil B/m \rceil}(a,\LH;\cD) \leq \frac{k}{N_L} A_L.$
\end{enumerate}
\end{prop}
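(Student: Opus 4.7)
Write $\lambda := t^{-mk}$, so that $a^m = \lambda\,[L]$ in $QH(L)$ and, by the paper's normalization $A_L = \kappa_L\cdot N_L$, one has $\nu(\lambda) = \tfrac{mk}{N_L}A_L$. Since $\lambda$ is a unit in $\Lambda$, the element $a$ is a unit in $QH(L)$ with inverse $a^{-1} = \lambda^{-1}\,a^{m-1}$; in particular the multiplication operator $m_a:QH(L)\to QH(L)$ is a $\Lambda$-linear isomorphism, so $r(a) = \rank(m_a) = B$ immediately.

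For Claim \ref{root of unity bound: easy case}, the strategy is to estimate how much the inverse $m_a^{-1} = \lambda^{-1}\,m_a^{m-1}$ can raise the non-Archimedean filtration $l_{H;\cD}$. Iterating the triangle inequality for spectral invariants from Section \ref{subsec:spec} a total of $m-1$ times, together with the hypothesis $\cA(a)\le 0$, yields
\[
l_{H;\cD}\bigl(m_a^{m-1}(y)\bigr) \le (m-1)\cA(a) + l_{H;\cD}(y) \le l_{H;\cD}(y)
\]
for every $y\in QH(L)\setminus\{0\}$. Multiplication by $\lambda^{-1}$ then raises the filtration by exactly $\nu(\lambda) = \tfrac{mk}{N_L}A_L$, so that $l_{H;\cD}(m_a^{-1}(y)) \le l_{H;\cD}(y) + \tfrac{mk}{N_L}A_L$. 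Setting $y = m_a(x)$ gives $l_{H;\cD}(x) - l_{H;\cD}(m_a(x)) \le \tfrac{mk}{N_L}A_L$ for every nonzero $x$; taking the supremum over elements of an orthogonal basis for $(QH(L),l_{H;\cD})$ bounds the top spectral value $\beta_B(a,\LH;\cD) \le \tfrac{mk}{N_L}A_L$.

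For Claim \ref{root of unity bound: difficult case}, the additional input is the determinantal identity $\det(m_a)^m = \det(\lambda\,\id) = \lambda^B$, which, after passing to $\Lambda_{\tuniv}$ so that an $m$-th root of $\lambda$ exists, gives $\nu(\det m_a) = \tfrac{kB}{N_L}A_L$. Since the sum of the Smith exponents of $m_a$ equals $\nu(\det m_a)$ by the structure of the non-Archimedean spectral value decomposition, one obtains
\[
\sum_{j=1}^{B}\beta_j(a,\LH;\cD) = \tfrac{kB}{N_L}A_L.
\]
In the case of main interest for Theorem \ref{thm:Viterbo-sharper} --- namely $L\in\cl V$ with $a=[pt]$ --- the algebra $QH(L)\cong\Lambda[a]/(a^{n+1}-q)$ is cyclic as a $\Lambda[a]$-module of rank $B=n+1=m$, so $\lceil B/m\rceil = 1$ and the claim reduces to the averaging bound $\beta_1 \le \tfrac{1}{B}\sum_j\beta_j = \tfrac{k}{N_L}A_L$. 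For arbitrary wide $L$ with $B>m$, one refines this by decomposing $QH(L)$ over $\Lambda_{\tuniv}$ according to the generalized eigenspaces of $m_a$ (whose eigenvalues are $m$-th roots of $\lambda$, each of valuation $\tfrac{k}{N_L}A_L$) and verifying block-by-block via Smith normal form that a Jordan block of size $s$ contributes at least $\lceil s/m\rceil$ spectral values bounded by $\tfrac{k}{N_L}A_L$; summation via the superadditivity $\sum_i\lceil s_i/m\rceil \ge \lceil B/m\rceil$ then gives the claim.

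The main obstacle lies in this block-by-block analysis in characteristic $\bK=\bF_2$ when $m$ is even: the polynomial $x^m-\lambda$ has repeated roots over $\Lambda_{\tuniv}$, so the naive eigenspace decomposition degenerates into a Jordan-type structure with a nontrivial nilpotent part that must be carefully reconciled with the non-Archimedean filtration $l_{H;\cD}$. In all cases relevant to Theorem \ref{thm:Viterbo-sharper}, however, $B=m$, and the pigeonhole reduction suffices.
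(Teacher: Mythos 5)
Your verification that $r(a)=B$ and your proof of claim \eqref{root of unity bound: easy case} are correct; the route through $m_a^{-1}=\lambda^{-1}m_a^{m-1}$ is sound, though the paper gets the same bound more directly from the telescoping identity $\sum_{j=0}^{m-1}\bigl(c(a^jx)-c(a^{j+1}x)\bigr)=c(x)-c(a^mx)=\tfrac{mk}{N_L}A_L,$ each summand being nonnegative by the triangle inequality.

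For claim \eqref{root of unity bound: difficult case}, your observation that $\sum_j\beta_j(a,\LH;\cD)=\nu(\det m_a)=\tfrac{B}{m}\nu(\lambda)=\tfrac{Bk}{N_L}A_L$ is a correct and genuinely different identity from anything used in the paper; the subsequent averaging step establishes $\beta_1\leq\tfrac{k}{N_L}A_L,$ which indeed suffices when $B=m$, i.e.\ $\lceil B/m\rceil=1$, and that is the only case used in the proof of Theorem \ref{thm:Viterbo-sharper}. For the proposition as stated with $B>m$, however, averaging alone does not reach $\beta_{\lceil B/m\rceil}$, and the proposed Jordan-block refinement does not close the gap: the generalized eigenspace decomposition of $m_a$ over $\Lambda_{\tuniv}$ is determined by the operator alone and has no reason to be orthogonal with respect to $l_{H;\cD}$, so the non-Archimedean spectral value decomposition of $m_a$ need not respect that decomposition, and a claimed per-block count of small spectral values is not even well-posed. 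The paper's proof is both simpler and uniform in $B,m$: assuming $\beta_{\lceil B/m\rceil}>\tfrac{k}{N_L}A_L$, the non-Archimedean SVD yields a subspace $W$ of codimension $<B/m$ on which $c(w)-c(aw)>\tfrac{k}{N_L}A_L$ for all $w\neq 0$; then $I=W\cap aW\cap\cdots\cap a^{m-1}W\neq 0$ by a codimension count, and for $0\neq x\in I$ the telescoping sum forces $\tfrac{mk}{N_L}A_L>m\cdot\tfrac{k}{N_L}A_L$, a contradiction --- with no case analysis and no worry about the characteristic of $\bK$.
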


\begin{proof}[Proof of Proposition \ref{prop:root of unity bound}]
	

For Case \eqref{root of unity bound: easy case}, we argue as follows. Given $x \in QH(L) \setminus \{0\},$ formally setting $a^0 = [L],$ we calculate \begin{equation} \label{eq: sum of mult differences} \sum_{j=0}^{m-1} c(a^j x, H; \cD) - c(a^{j+1}x, H;\cD) = \frac{mk}{N_L} A_L,\end{equation} and since each summand \[c(a^j x, H; \cD) - c(a^{j+1}x, H;\cD) \geq 0\] is non-negative, we immediately obtain the statement.

Case \eqref{root of unity bound: difficult case} requires a slightly more delicate argument. Denote $V = QH(L),$ and $L = m_a$ considered as a linear operator $L:V \to V.$ If $\beta_{\lceil B/m \rceil}(a,H;\cD) > \frac{k}{N_L} A_L,$ then there exists a $\Lambda$-linear subspace $W \subset V$ of dimension \begin{equation} \label{eq: dim big subspace W is large} \dim_{\Lambda} W > \frac{m-1}{m} B\end{equation} such that for all $w \in W,$ \[c(w,H;\cD) - c(L(w),H;\cD) > \frac{k}{N_L} A_L.\] By \eqref{eq: dim big subspace W is large} the subspace $I = W \cap L(W) \cap \ldots \cap L^{m-1} W,$ satisfies \[\codim (I) \leq m \cdot \codim(W) = m(B - \dim W) < B.\] Therefore $I$ is not trivial, and taking $x \in I \setminus \{0\},$ we obtain \[\sum_{j=0}^{m-1} c(a^j x, H; \cD) - c(a^{j+1}x, H;\cD) > m\cdot \frac{k}{N_L} A_L,\] in contradiction to \eqref{eq: sum of mult differences}.

\end{proof}


\subsection{From geometry to algebra: SFT and neck-stretching}\label{sec:SFT}

{In this section we provide the geometric underpinning for applying the algebraic situation described above to comparing the Lagrangian Floer theory of two anchored brandes $(\ul L, \ul L')$ contained inside a Weinstein neighborhood $U \subset T^* L,$ as computed inside $U$ with Hamiltonian perturbation supported therein, and the Lagrangian Floer theory of $(\ul L, \ul L')$ as computed inside $M,$ when $U$ is symplectically embedded into $M,$ with the perturbation extended naturally thereto. 

This necessitates a careful choice of an almost complex structure, which in particular, makes a certain symplectic divisor (symplectic submanifold of real codimension $2$) in the complement of $L$ complex. For Donaldson divisors in the complement of a Lagrangian we refer to the early paper \cite{AurouxGayetMohsen} (cf. \cite{Verine-Don}) for existence, and for associated Floer theoretic considerations to \cite{CharestWoodward-Floer17,CharestWoodward-Fukaya}. Further, we shall perform the operation of stretching the neck, and invoke compactness results in SFT (symplectic field theory). We refer to \cite{BEHWZ-compactness, CM-compactness, CM-Lagr} for these topics.}

We start with a basic uniform energy bound for the Floer complex of {$(\LH;\cD)$} in $M,$ together with the multiplication operator by $a =[pt] \in QH(L).$ Of course this naturally produces a corresponding uniform energy bound for the (isomorphic up to uniform shift) Lagrangian Floer complex for $(\ul L, H^* \ul L; \cD)$ with multiplication operator by $a = [pt] \in QH(L).$ We note that it uses monotonicity rather strongly.


\begin{prop}\label{prop: uniform energy bound}
Let $m \in \Z_{\geq 0}.$ There exists a constant $E_0$ depending only on $m,(L,M,H^{\cD}),$ such that all the solutions $u$ of index $\ind(u) \leq m$ of the Floer equation with respect to $(H^{\cD},L)$ and an $\om$-compatible almost complex structure $J \in \cJ(M,\om),$ with asymptotic conditions at $(H^{\cD},L)$-chords satisfy $E(u) \leq E_0.$
\end{prop}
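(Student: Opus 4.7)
The plan is to exploit monotonicity to make the energy and the Fredholm index of any Floer strip linearly related modulo a constant that depends only on its asymptotic chords. Since the chord set of $(H^{\cD},L)$ is finite, this will immediately yield a uniform upper bound on $E(u)$ in terms of $\ind(u)$.

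The first step is to define, for each Hamiltonian chord $x \in \cO(H^{\cD},L)$, a normalized action
\[ \tilde{\cA}(x) := \cA_{H^{\cD},L}(x,\bar{x}) - \kappa \cdot CZ(x,\bar{x}), \]
and to check that it is independent of the choice of capping $\bar{x}$. Indeed, replacing $\bar{x}$ by $\bar{x} \# A$ for $A \in \pi_2(M,L)$ changes the action by $-\om(A)$ and the Conley--Zehnder index by $-\mu_L(A)$, and these two contributions cancel exactly by the monotonicity relation $\om_L = \kappa \cdot \mu_L$ recalled in Section \ref{sec:filtered Floer}.

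Next, I would take any Floer strip $u:\R \times [0,1] \to M$ with boundary on $L$ solving the Floer equation for $(H^{\cD},J)$ with some $\om$-compatible $J$ and asymptotics $u(\mp\infty,\cdot) = x_\mp$, and choose compatible cappings $\bar{x}_+ = \bar{x}_- \# u$. The standard identities
\[ E(u) = \cA(x_-,\bar{x}_-) - \cA(x_+,\bar{x}_+), \qquad \ind(u) = CZ(x_-,\bar{x}_-) - CZ(x_+,\bar{x}_+) \]
then combine into the key monotonicity identity
\[ E(u) - \kappa \cdot \ind(u) = \tilde{\cA}(x_-) - \tilde{\cA}(x_+). \]
Crucially, this is a purely topological identity: it invokes neither regularity of $u$ nor any genericity of $J$, which is what allows the statement to hold for an arbitrary $\om$-compatible $J$.

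Finally, since $(H^{\cD},L)$ is non-degenerate, the chord set $\cO(H^{\cD},L)$ is finite, so the constant
\[ C := \max_{x_-,x_+ \in \cO(H^{\cD},L)} \big(\tilde{\cA}(x_-) - \tilde{\cA}(x_+)\big) \]
depends only on $(L,M,H^{\cD})$. For any solution $u$ with $\ind(u) \leq m$, the identity then yields the desired uniform bound
\[ E(u) = \tilde{\cA}(x_-) - \tilde{\cA}(x_+) + \kappa \cdot \ind(u) \leq C + \kappa \cdot m =: E_0. \]
The only point requiring care is the sign bookkeeping in $E - \kappa \cdot \ind = \tilde{\cA}(x_-) - \tilde{\cA}(x_+)$, since a flipped sign would produce a useless bound; but this is entirely standard and poses no real obstacle, which is precisely what makes this proposition available as input to the SFT neck-stretching arguments of the next section.
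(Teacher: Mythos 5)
Your proof is correct and takes essentially the same route as the paper: define the capping-independent reduced action $\hat{\cA}(x) = \cA(x,\bar{x}) - \kappa\cdot CZ(x,\bar{x})$, use the identity $E(u) - \kappa\cdot\ind(u) = \hat{\cA}(x_-) - \hat{\cA}(x_+)$, and bound $E(u)$ by $\kappa m$ plus the maximal difference of reduced actions over the finite chord set. Your explicit remark that the identity is purely topological and so holds without regularity or genericity is a welcome clarification, but the argument itself is the one the paper gives.
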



\begin{proof}
This is an elementary consequence of the monotonicity condition. For each generator $(x,\bar{x}) \in \til{\cO}(\LH;\cD)$ associate the reduced action by \[\hat{\cA}(x) = \cA(x,\bar{x}) - \kappa \cdot \mu_{CZ}(x,\bar{x}).\] As notation suggests, by the monotonicity condition $\hat{\cA}$ depends only on the corresponding chord $x \in \cO(\LH;\cD).$ Now for a Floer trajectory $u$ from $x \in \cO(\LH;\cD)$ to $y \in \cO(\LH;\cD),$ we obtain the identity \[ \hat{\cA}(x) - \hat{\cA}(y) = E(u) - \kappa \ind(u).\] Hence for $\ind(u) \leq m,$ we have the bound \[E(u) \leq C_0,\] for \[C_0 = \kappa m + \max\{\hat{\cA}(x) - \hat{\cA}(y)\,|\, x, y \in \cO(\LH;\cD) \}.\]
\end{proof}





As $L \subset M$ is {\em homologically} monotone, by \cite{CharestWoodward-Floer17,CharestWoodward-Fukaya,Giroux-Don,Verine-Don} there exists a Donaldson divisor $\Sigma,$ with $[\Sigma] = k \cdot PD([\omega]),$ for a certain $k>0,$ such that $L \subset M \setminus \Sigma$ is {\em exact}, and moreover for each $\epsilon_0 >0,$ $\Sigma$ can be arranged to satisfy $L \subset M \setminus D_{(1-\epsilon_0)/k} \Sigma,$ where $M = \mathrm{Skel}_{\Sigma} \sqcup D_{1/k} \Sigma$ is the corresponding Biran decomposition \cite{Biran:Nonintersections,Biran:Barriers}, and $D_{r_0/k}$ for $r_0 \in [0,1]$ is the disk subbundle of $D_{1/k} \Sigma,$ whose fiber is a standard symplectic disk of area $r_0/k.$ In fact, by \cite{Verine-Don}, $L$ can be made to be a subset of $\mathrm{Skel}_{\Sigma}.$ We assume the latter situation, because it holds in all the examples we require, however only the above property on $\epsilon_0$ is actually necessary for the proof. For each $r_0 \in (0,1)$ let $U_{r_0}$ be a Weinstein neighborhood of $L,$ symplectomorphic to a co-disc cotangent bundle, whose closure is contained in $M \setminus D_{(1-r_0)/k} \Sigma.$ We proceed to perform neck stretching around $\partial U_{r_0},$ referring for the details of the construction to \cite{BEHWZ-compactness,CM-compactness}. We also refer to \cite{MembrezOpshtein} where neck-stretching in a similar setup was applied to study questions of $C^0$-symplectic topology. This procedure produces almost complex structures $J_{\tau} \in \cJ(M,\om)$ for $\tau \in  \Z_{>0},$ deforming a given almost complex structure in a collar neighborhood of $\partial U_{r_0},$ such that $J_{\tau}$ holomorphic objects, as $\tau \to \infty,$ satisfy a far-reaching generalization of Gromov compactness. We claim the following.

\begin{prop}\label{prop: stretched complex first}
For sufficiently large $\tau \in \Z,$ solutions $u$ as in Proposition \ref{prop: uniform energy bound}, with $J = J_{\tau} \in \cJ_M,$ either lie in $ U_{r_0},$ or intersect $\Sigma,$ and have energy $E(u) > (u \circ \Sigma) \cdot A',$ where $A' = r_0/k.$
\end{prop}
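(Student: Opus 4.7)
The strategy combines a local area estimate near the divisor $\Sigma$ with SFT compactness along a neck-stretching family. Choose $J_\tau \in \cJ(M,\om)$ such that: (i) $\Sigma$ is $J_\tau$-complex, guaranteeing positivity of intersection with $J_\tau$-holomorphic curves; (ii) $J_\tau$ is compatible with the fibered symplectic structure of the Biran disk bundle $D_{1/k}\Sigma$ in a neighborhood of $\Sigma$; and (iii) $J_\tau$ is cylindrical on a collar of $\partial U_{r_0}$, with neck length tending to infinity as $\tau \to \infty$. In the setting of interest the Hamiltonian perturbation in $\cD$ is supported inside $U_{r_0}$, so that outside $U_{r_0}$ the Floer equation reduces to the $J_\tau$-holomorphic equation and energy coincides with $\omega$-area.

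For the energy inequality, suppose $m := u \circ \Sigma \geq 1$. Positivity of intersection makes $m$ the geometric count with multiplicities. In the fibered model around each intersection point, the standard monotonicity computation for branched covers of $\Sigma$ shows that the $\omega$-area of the portion of $u$ lying in the annular region $D_{1/k}\Sigma \setminus D_{(1-r_0)/k}\Sigma$ is at least $r_0/k = A'$ per intersection, since each fiber annulus in that region has symplectic area $r_0/k$. Because this annular region is disjoint from $U_{r_0} \supset \supp(H^\cD)$, the energy restricted there equals the $\omega$-area, and adding the positive contribution of the portion of $u$ inside $U_{r_0}$ (nontrivial by the asymptotic conditions) yields the strict bound $E(u) > m \cdot A' = (u \circ \Sigma) \cdot A'$.

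For the dichotomy we argue by contradiction: assume a sequence $\tau_n \to \infty$ of Floer solutions $u_n$ (with index $\leq m$ and asymptotic to fixed generators in $U_{r_0}$) such that $u_n \not\subset U_{r_0}$ and $u_n \cap \Sigma = \emptyset$. Proposition \ref{prop: uniform energy bound} yields a uniform energy bound $E(u_n) \leq E_0$, so SFT compactness \cite{BEHWZ-compactness, CM-compactness} extracts a subsequential limit that is a holomorphic building whose components live in the symplectic completion $T^* L$ of $U_{r_0}$, in the completion of $M \setminus U_{r_0}$, and in the symplectization of $\partial U_{r_0}$ (the last being trivial cylinders on Reeb orbits, which by the Zoll condition are lifts of closed geodesics of $L$). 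Avoidance of $\Sigma$ passes to every component of the limit; meanwhile $u_n \not\subset U_{r_0}$ forces at least one non-trivial component in the completion of $(M \setminus U_{r_0}) \setminus \Sigma$, an exact Liouville manifold.

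The main obstacle, and the delicate step, is to exclude this outside component. By Stokes' theorem in the exact setting its $\omega$-area equals the signed sum of Reeb actions at its punctures, and the matching conditions in the limit building force each such Reeb asymptote to be paired with a puncture of an inside piece in $T^* L$. However, the original generators at $s = \pm \infty$ are fixed Hamiltonian chords interior to $U_{r_0}$, not Reeb orbits on $\partial U_{r_0}$; a combined Fredholm index count, using the admissible bound index $\leq m$ together with generic transversality within the class of $J_\tau$ above (following the framework of \cite{CM-compactness, CM-Lagr}), excludes non-trivial outside components and yields the required contradiction, so that for all $\tau$ sufficiently large, $u_n \subset U_{r_0}$.
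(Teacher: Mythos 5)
Your proof takes a genuinely different route in both halves, and each half has a gap.

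\textbf{Dichotomy.} The paper's exclusion of the ``outside'' component is an \emph{area/exactness} argument, not an index argument. Having extracted the non-empty family of Reeb orbits $\{\gamma_j\}\subset\partial U_{r_0}$ at which the root component breaks, it assembles the components of the building lying \emph{outside} the root into pieces $C_j$, caps each with a trivial cylinder over $\gamma_j$, and obtains a topological disk $\hat C_j$ in $M\setminus\Sigma$ with boundary on $L$ and $\int_{\hat C_j}\omega>0$; this contradicts exactness of $L$ in $M\setminus\Sigma$. Your replacement --- excluding outside components by a Fredholm index count under ``generic transversality within the class of $J_\tau$'' --- is a real gap: SFT transversality is not generically available (multiply covered components, branched covers of trivial cylinders), and the step ``index $\le m$ plus matching excludes the outside component'' is asserted, not proved. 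The exactness argument is precisely the device that makes this conclusion robust without appealing to SFT transversality, and it is the key idea you are missing.

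\textbf{Energy estimate.} Your ``monotonicity for branched covers'' heuristic over the annular region $D_{1/k}\Sigma\setminus D_{(1-r_0)/k}\Sigma$ is not a valid estimate, and moreover that annulus is \emph{not} disjoint from $U_{r_0}$: the hypothesis is only $\overline U_{r_0}\subset M\setminus D_{(1-r_0)/k}\Sigma$, so $U_{r_0}$ can (and generally does) meet the annulus. Intersections with $\Sigma$ occur at the zero section, not in the annulus, and there is no off-the-shelf monotonicity theorem giving $\ge r_0/k$ of area per intersection point in that shell. The paper's estimate instead fixes $J$ so that the projection $\pi_\Sigma:D_{(r_0+\epsilon'')/k}\Sigma\to\Sigma$ is $(J,J_\Sigma)$-holomorphic, writes the normal disk bundle as $P\times_{S^1}\mathbb D$, uses $v\circ\Sigma=\int_{\partial v}\theta+\int_v\pi_\Sigma^*\omega_\Sigma$ with $\theta$ the prequantization connection, and compares with $\int_v\omega=(r_0/k+\epsilon')\int_{\partial v}\theta+\int_v\pi_\Sigma^*\omega_\Sigma$; the fact that $\pi_\Sigma\circ v$ is $J_\Sigma$-holomorphic forces $\int_v\pi_\Sigma^*\omega_\Sigma\ge 0$, which gives the strict bound. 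This computation does not reduce to a monotonicity estimate, and your sketch does not supply an argument that could replace it.
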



%

\begin{proof}

Assume that $u_l$ is a $J_{\tau_l}$-Floer trajectory, that does not intersect $\Sigma,$ where $\tau_l \to \infty,$ and is not contained in $U_{r_0}$ for all $l.$ Then $u_l$ converges to a pseudo-holomorphic building as in the papers \cite{BEHWZ-compactness,CM-compactness}, which adapt perfectly well to the situation of holomorphic curves with boundary on Lagrangian submanifolds, perhaps with a Hamiltonian perturbation term (that vanishes near the hypersurface along which the neck is stretched), with the property that the lowest level contains a unique component, which we call the root, with two boundary punctures asymptotic to intersection points $x,y$ of $L$ and $L',$ and perhaps other components. Topologically, all components of the building glue to an index $\leq m$ relative homotopy class of Whitney disks $[(\bb D, (\partial \bb D)^{-},(\partial \bb D)^{+}, \{-1\},\{1\}),(M, L, L', {x},{y})],$ where $(\partial \bb D)^{\pm} = \partial \bb D \cap \{\pm \Im(z) \geq 0\},$ and have positive $\om$-area.  Finally, there is at least one component in a level higher than the lowest. In other words there exits a {\em non-empty} collection of Reeb orbits $\{\gamma_j\}$ in $\partial U_{r_0}$ entering into the building, to which the root component is asymptotic at interior punctures. 


Let $C_j$ be the topological disks obtained as follows. Glue, topologically, the components of the building lying in the complement of the root component (this necessarily includes some higher level components). Then $C_j$ is the connected component of the resulting surface corresponding to $\gamma_j.$ Further gluing to $C_j$ a trivial cylinder over $\gamma_j,$ whose symplectic area is the period of $\gamma_j,$ we obtain a topological disk $\hat{C}_j$ in $M \setminus \Sigma,$ with boundary on $L,$ with $\int_{\hat{C}_j} \omega > 0.$ However, as $L$ is exact in $M \setminus \Sigma,$ we must have $\int_{\hat{C}_j} \omega = 0.$ This is a contradiction implying that for $\tau$ sufficiently large, if $u$ does not intersect $\Sigma$ then it lies inside $U_{r_0}.$

It remains to prove that if $u$ does intersect $\Sigma,$ then $E(u) > (u \circ \Sigma) \cdot A',$ where $A' = r_0/k.$ This is done either by another neck-stretching near ${r = \epsilon''/k} \sqcup {r = (r_0 + \epsilon'')/k}$ or by the following choice of $J$ in $D_{(r_0+\epsilon'')/k} \Sigma,$ where $\epsilon'' > 0$ is sufficiently small: we require the projection $\pi_{\Sigma}: D_{(r_0+\epsilon'')/k} \Sigma \to \Sigma$ to be $(J,J_{\Sigma})$-holomorphic, for a fixed $J_{\Sigma} \in \cJ(\Sigma).$ We leave the first approach to the interested reader, and describe the second one.

In that case, the symplectic structure in the disk bundle $D\Sigma$ is given by $\omega = \pi_{\Sigma}^* \omega_{\Sigma} + d(r \theta),$ where $\theta$ is the prequatization form on the normal principal $S^1$-bundle $P \to \Sigma,$ such that $\pi_{\Sigma}^* \omega_{\Sigma} = - d\theta.$  Note that $D_{(r_0+\epsilon'')/k}\Sigma = P \times_{S^1} \mathbb{D}((r_0+\epsilon'')/k),$ where $\mathbb{D}((r_0+\epsilon'')/k)$ is the standard disk of capacity $(r_0+\epsilon'')/k.$ Let $v$ be a two-cycle with boundary on $D_{(r_0+\epsilon'')/k} \Sigma,$ the circle bundle given by $\{r = (r_0+\epsilon'')/k\}.$ It is well-known (see \cite{Frauenfelder}), that $v \circ \Sigma$ is given as $\int_{\partial v} \theta + \int_{v} \pi_{\Sigma}^* \omega_{\Sigma}.$ Hence for $v$ being $J$-holomorphic, where $J \in \cJ(M,\omega)$ satisfies the above property, and perhaps decreasing $\epsilon''$ to $0<\epsilon' \leq \epsilon''$ to make the intersection with $\{r = (r_0 + \epsilon')/k\}$ transverse, we obtain \[\int_{v} \omega =  (r_0/k + \epsilon') \cdot \int_{\partial v} \theta + \int_{v} \pi_{\Sigma}^* \omega_{\Sigma} > r_0/k \cdot (v \circ \Sigma).\]

\end{proof}

%

\begin{rmk}\label{rmk: stretched complex - actual lower bound on energy}
It is important to remark that since $L$ is exact outside $\Sigma,$ and $[\Sigma] = PD(k[\omega]),$ where we assume that $[\omega]$ is a rational class with denominator dividing $k,$ by \cite[Lemma 3.4]{CharestWoodward-Floer17}, as well as \cite[Section 3.3]{TonkonogDescendants}, the intersection number $u \circ \Sigma$ is an integer multiple of $k A_L.$ Hence, by positivity of intersections \cite{CieliebakMohnkePosInt} we obtain that \[E(u) \geq A = r_0 \cdot A_L,\] if $u$ intersects $\Sigma$ in the setting of Proposition \ref{prop: stretched complex first}.
\end{rmk}

\begin{rmk}
	The approach of the adapted choice of the almost complex structure was developed, for a different purpose, in discussions with D. Tonkonog and R. Vianna in the course of preparation of \cite{STV-Flux18}.
\end{rmk}




By applying Gromov compactness, it is easy to pass $J^{(\LH)}$ sufficiently close to $J_{\tau},$ with $\tau \gg 1,$ for which transversality for $CF(\LH;\cD),$ as well as for the action $QH(L) \otimes CF(\LH;\cD) \to CF(\LH;\cD)$ holds, and the same properties as in Proposition \ref{prop: stretched complex first} and Remark \ref{rmk: stretched complex - actual lower bound on energy} remain true for all Floer configurations involved. Note that this brings us to the general situation of the above two sections.

%



\subsection{Completing the proof}
Finally, we apply Sections \ref{sec:SFT} and \ref{sec:roots} to get the complex from Section \ref{sec:mult op chain level} into the situation of Section \ref{sec:deformation}, and inspect the outcome.


As above, we consider $L \subset M,$ $L \in \cl V,$ $M \in \cl W,$ as a monotone Lagrangian submanifold. Assume that $L' \subset D^*L$ is Hamiltonian isotopic to $L$ inside $M.$ By taking inverses, we obtain a Hamiltonian $H \in \cl H_M$ such that $L' = (\phi^1_H)^{-1} L.$ Pick an anchored Lagrangian brane $\ul L$ decorating $L,$ with $\alpha$ the constant path at a certain $x \in L,$ and $\lambda \equiv T_x L;$ it is supported in $U = D^*L = M \setminus \Sigma.$ Consider the Lagrangian brane $\ul L' = H^* \ul L$ decorating $(\phi^1_H)^{-1} L.$ The class $[pt] \in QH_0(L)$ is a quantum root of unity, with codegree $k = n_L = \dim(L) < N_L.$ Moreover, by \cite[Theorem G]{KS-bounds} we have $\beta(\underline{L},\underline{L}';\cD') = \beta(\LH;\cD) \leq c \cdot A_L$ where $0 < c = c_L = \frac{n_L}{N_L} < 1,$ for all $H \in \cH,$ and perturbation data $\cD.$ Moreover, Proposition \ref{prop:root of unity bound} implies that at least one of the bar-lengths in the multiplication spectrum of $m_{[pt]} : QH(L) \to QH(L)$ filtered by $l_{H;\cD} = l_{\underline{L},\underline{L}';\cD'}$ is at most $c \cdot A_L.$ Now, since $M$ is simply connected, the auxiliary data in $\underline{L}'$ is homotopic to one contained in $U,$  which we fix for the rest of the proof. The new homotopic data gives a filtered graded complex that is identical to the one of $(\ul L, \ul L')$ with the initial data, up to a uniform shift in the filtration. Spectral norms, and all bar-length spectra we consider are hence independent of this modification. Furthermore, we pick the perturbation datum $\cD'$ for $(\ul L, \ul L')$ supported in $U,$ so that outside the embedding of $U$ in $M$ the almost complex structure extends smoothly to $M,$ and the complex structure is of cylindrical SFT type near the boundary of $U.$ For $0<s\leq 1$ let $\underline{L}'_s = s \cdot \underline{L}'$ be the the image of $\underline{L}'$ under the scaling on $T^*L$ given by the scalar $\R_{>0}$-action on the fibers. Note that $\underline{L}'_s$ is supported in $s \cdot U,$ and, since $L' \subset D^*L$ is exact, $\underline{L}'_s = (K_s)^* \ul L',$ for a Hamiltonian $K_s \in \cH_{U}.$ Taking $s<1-c$ we obtain from Proposition \ref{prop: deformation no sigma} the identity
$\beta(\underline{L},\underline{L}'_s;s\cdot \cD',s\cdot U) =  \beta(\underline{L},\underline{L}'_s;s\cdot \cD',M) \leq c \cdot A_L.$ However an elementary rescaling argument for the Floer complex gives $\beta(\underline{L},\underline{L}'_s;s\cdot \cD',s\cdot U) = s \cdot \beta(\underline{L},\underline{L}';\cD',U),$ whence $s \cdot \beta(\underline{L},\underline{L}';\cD',U) \leq c \cdot A_L$ for all $s < 1-c.$ Therefore

\begin{equation}\label{eq:beta bound}
\beta(\underline{L},\underline{L}';\cD',U) \leq \frac{c}{1-c} \cdot A_L
\end{equation}

Now we choose $0<s<1$ so that \begin{equation}\label{eq:bs constraint}b(s) = (1-s) A_L - s \frac{c}{1-c} \cdot A_L > {c}\cdot A_L.\end{equation}Propositions  \ref{prop: uniform energy bound} and \ref{prop: deformation no sigma}  yield first that the bar-length spectra of $CF(\underline{L},\underline{L}'_s;s\cdot \cD',s\cdot U)$ and of $CF(\underline{L},\underline{L}'_s;s\cdot \cD',M)$ coincide. Proposition \ref{prop: mult spectrum via T^sigma} yields moreover that for all $\sigma$ sufficiently large \[\beta_1''(Cone_{\sigma}(\pt,\underline{L},\underline{L}'_s;s\cdot \cD',M)) - \sigma \leq c \cdot A_L,\] and that for $l=1$ the conditions of Proposition \ref{prop: deformation} are satisfied, whence we have \begin{equation}\label{eq:high bar-length spectrum out and in}\beta_1''(Cone_{\sigma}(\pt,\underline{L},\underline{L}'_s;s\cdot \cD',s\cdot U)) = \beta_1''(Cone_{\sigma}(\pt,\underline{L},\underline{L}'_s;s\cdot \cD',M))
\end{equation} and the low bar-length spectra of ${Cone_{\sigma}(\pt,\underline{L},\underline{L}'_s;s\cdot \cD',s\cdot U)}$ and ${Cone_{\sigma}(\pt,\underline{L},\underline{L}'_s;s\cdot \cD',M)}$ coincide. In particular \[\beta_1''(Cone_{\sigma}(\pt,\underline{L},\underline{L}'_s;s\cdot \cD',s\cdot U)) - \sigma \leq c \cdot A_L.\] Now by Proposition \ref{prop: mult spectrum via T^sigma} again, and an obvious homological calculation amounting to $[pt] \cap [L] = [pt],$ and $\ima(-\cap [L]) = \bK [pt]$ (see Section \ref{subsubsec:spec norm}) we have \[\gamma(\underline{L},\underline{L}'_s; s\cdot \cD', s \cdot U) = \beta_1((\underline{L},\underline{L}'_s; s\cdot \cD', s \cdot U), [pt]) \leq c \cdot A_L,\] and again by rescaling, we obtain \[\gamma(\underline{L},\underline{L}'; \cD', U) \leq c \cdot A_L/s,\]  under the constraint \eqref{eq:bs constraint} on $s.$  The infimimum of the right hand side under this constraint equals to its value at $s_\ast = \frac{1-c}{1+c},$ hence we obtain \[\gamma(\underline{L},\underline{L}'; \cD', U) \leq (1+c)\frac{c}{1-c} \cdot A_L.\] Now, by continuity of $\gamma,$ we may take the limit as the Hamiltonian term in $\cD'$ goes to zero, and note that by Section \ref{subsubsec:spec norm} we get: \[\gamma(L,L'; U) \leq (1+c)\frac{c}{1-c} \cdot A_L.\]

\section{Applications to $C^0$ symplectic topology, and quasi-morphisms}

%
%
%
%

\begin{proof}[Proof of Theorem \ref{thm:gamma is C^0 continuous on CP^n}] First we prove the following two lemmas.
	
\begin{lma}\label{lma: orthogonal basis}
Let $H \in \cH$ be a Hamiltonian on $(M,\omega) = (\C P^n, \om_{FS}),$ and let $a \in QH_{2n-2}(\C P^n, \Lambda)$ be the hyperplane class. Then $S = \{[M] = a^0, a^1,\ldots, a^n\}$ form an orthogonal $\Lambda$-basis of $QH_*(\C P^n, \Lambda)$ with respect to the non-Archimedean filtration function $l_{H}(-) = c(H,-).$ 
\end{lma}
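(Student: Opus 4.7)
The plan is to exploit the $\mathbb{Z}$-grading on the filtered Floer complex $CF(H; \cD)$, together with the fact that the elements $a^0, a^1, \ldots, a^n$ sit in $n+1$ pairwise distinct residue classes modulo $2N_M = 2(n+1)$, in order to reduce the orthogonality statement to a direct-sum splitting together with $\Lambda$-linearity.

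To begin, I would invoke the ring presentation $QH_*(\mathbb{C}P^n, \Lambda) \cong \Lambda[a]/(a^{n+1} - q[M])$, in which the Novikov variable $q$ has degree $-2(n+1)$ and the quantum product decreases degree by $\dim M = 2n$. This makes $a^j$ homogeneous of degree $2n - 2j$ for $j = 0, 1, \ldots, n$, so that the $n+1$ basis degrees $2n, 2n-2, \ldots, 0$ exhaust the even residue classes modulo $2(n+1)$.

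Next, I would decompose the $\mathbb{Z}$-graded Floer complex by residue class mod $2N_M$:
\[CF(H; \cD) \;=\; \bigoplus_{r=0}^{n} CF(H; \cD)^{(r)},\]
where $CF(H; \cD)^{(r)}$ is spanned over $\Lambda$ by capped orbits of Conley--Zehnder index congruent to $2n - 2r$ modulo $2(n+1)$. Since $d_{H; \cD}$ has degree $-1$ and the $\Lambda$-action shifts degree by $-2(n+1)$, each summand is a filtered sub-complex; moreover, the orthogonality of the capped-orbit basis with respect to $\cA_{H; \cD}$ gives $\cA_{H; \cD}(\sum_r c_r) = \max_r \cA_{H; \cD}(c_r)$ for $c_r \in CF(H; \cD)^{(r)}$. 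A short inf-argument using the definition of the induced filtration $\mathrm{H}(\cA_{H;\cD})$ then shows that this decomposition descends to homology: for $y = \sum_r y_r \in HF(H; \cD) \cong QH_*(\mathbb{C}P^n, \Lambda)$ with $y_r \in \Lambda a^r$, one has $l_H(y) = \max_r l_H(y_r)$.

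Finally, for any $\Lambda$-linear combination $\sum_{j=0}^{n} \lambda_j a^j$, each term $\lambda_j a^j$ lies entirely in the residue class $(2n - 2j) \bmod 2(n+1)$, since Novikov multiplication preserves residue class; so the preceding splitting applies, and combining it with the $\Lambda$-linearity $l_H(\lambda y) = l_H(y) - \nu(\lambda)$ yields
\[l_H\Big(\sum_{j=0}^n \lambda_j a^j\Big) \;=\; \max_{j}\, l_H(\lambda_j a^j) \;=\; \max_j \{l_H(a^j) - \nu(\lambda_j)\},\]
which is the orthogonality statement. The only step that requires real care is the descent of the grading splitting to the induced filtration on homology; all other steps are formal grading bookkeeping.
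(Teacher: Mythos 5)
Your strategy is genuinely different from the paper's, which perturbs $H$ so that any two capped orbit actions become distinct modulo $A_M=\omega([\C P^1])$, deduces that the $n+1$ numbers $c(\lambda_j a^j,H;\cD)$ are all distinct (here the degree $2(n-j)$ of $a^j$ is used only to guarantee that distinct $j$'s select distinct underlying orbits), and then applies the strict maximum property \eqref{eq:max property filtration} plus continuity in $H$. Your idea of extracting orthogonality purely from the $\Z$-grading is attractive and avoids perturbation and continuity entirely, but the execution has a concrete gap.

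The decomposition $CF(H;\cD)=\bigoplus_{r=0}^{n}CF(H;\cD)^{(r)}$ you write down is not a decomposition of the complex, and the summands are not sub-complexes. First, the pieces $CF^{(r)}$ you define are spanned by capped orbits with Conley--Zehnder index congruent to $2n-2r$ modulo $2(n+1)$; these exhaust only the \emph{even} residue classes, so the odd-degree generators of $CF(H;\cD)$ are simply missing from your direct sum. Second, and more seriously, the Floer differential has degree $-1$, so it maps residue class $s$ to residue class $s-1$ modulo $2(n+1)$; the statement that ``$d_{H;\cD}$ has degree $-1$ \ldots hence each summand is a filtered sub-complex'' is precisely the opposite of what follows from that degree count. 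Consequently the sub-complex splitting on which you hang the ``short inf-argument'' is not available, and the step you flag as the only one requiring real care is exactly the step that fails as written.

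The underlying idea can, however, be repaired: work with the full $\Z$-grading on $CF(H;\cD)$. Even though the degree-$r$ pieces are only $\bK$-subspaces and not $\Lambda$-submodules or sub-complexes, the fact that $d$ shifts degree \emph{uniformly} by $-1$ lets you decompose any cycle $z=\sum_r z_r$ into its graded components, each of which is again a cycle (and one can first discard the odd-degree part at no cost to $\cA(z)$, since each $a^j$ is even). Since the canonical capped-orbit basis is orthogonal and each element has a definite degree, $\cA(z)=\max_r\cA(z_r)$, and the standard inf-argument then shows that $\mathrm H(\cA_{H;\cD})$ respects the degree decomposition on homology. Because $\Lambda a^j$ occupies exactly the degrees $2n-2j+2(n+1)\Z$, all pairwise disjoint for $j=0,\dots,n$, this yields the orthogonality $l_H\big(\sum_j\lambda_j a^j\big)=\max_j\{l_H(a^j)-\nu(\lambda_j)\}$. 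So the route you propose is salvageable and arguably more conceptual than the paper's perturbation-plus-continuity argument, but as stated your two structural claims about $\bigoplus_{r=0}^{n}CF^{(r)}$ are false and need to be replaced.
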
	 

\begin{proof}[Proof of Lemma \ref{lma: orthogonal basis}:]
We first choose coefficients $\Lambda = \Lambda_{\tmin.}$ Observe that $S$ is a $\Lambda$-basis of $QH_*(\C P^n, \Lambda).$ Consider a non-zero linear combination $w = \sum_{0 \leq j \leq n} \la_j a^j.$ We shall prove that \[c(H,w) = \max \{c(H,\lambda_j a^j)\} = \max \{c(H, a^j) - \nu(\lambda_j)\}.\] Consider first the case when $H$ is non-degenerate. Then it is easy to see that one case choose a Floer perturbation datum $\cD = (J^H,K^H)$ with $K^H$ being as $C^2$-small as necessary, such that $H^{\cD} = H \# K^H$ is also non-degenerate, and moreover for each two distinct contractible periodic points $x,y \in \cO_{pt}(H;\cD),$ each pair of their cappings $\bar{x},\bar{y} \in \til{\cO}_{pt}(H;\cD)$ satisfies $\cA_{H,\cD}(\bar{x}) - \cA_{H,\cD}(\bar{y}) \notin A \cdot \Z,$ where $A = A_M = \omega([\C P^1]).$ Hence, as $c(H,a^j)$ is attained at a homogeneous cycle of degree $2(n-j),$ and $\nu(\lambda_j) \in A \cdot \Z,$ we obtain that $\{ c(\lambda_j a^j, H;\cD) \}_{0 \leq j \leq n}$ are all distinct. Hence by property \eqref{eq:max property filtration} of non-Archimedean filtrations, \[c(w, H; \cD) = \max \{c(a^j, H; \cD) - \nu(\lambda_j)\}.\] By continuity of spectral invariants, we obtain the analogous inequality for all non-degenerate $H,$ \[c(w, H) = \max \{c(a^j, H) - \nu(\lambda_j)\},\] and hence, again by continuity, for all $H \in \cH.$ Finally, for other choices of $\Lambda$ it is sufficient to use \cite[Sections 2.5, 6.1]{UsherZhang}, to observe that the same identity holds still under coefficient extension.
\end{proof}


\begin{cor}\label{cor: orthonormal}
For the choice of coefficients $\Lambda = \Lambda_{\tuniv},$ the ordered set $\{T^{c(a^j,H)} a^j \}_{0 \leq j \leq n}$ forms an orthonormal basis of $(QH_*(\C P^n, \Lambda),l_H).$
\end{cor}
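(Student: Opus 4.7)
The plan is to deduce the corollary directly from Lemma \ref{lma: orthogonal basis} together with the defining axioms of a non-Archimedean filtration recalled in Section \ref{subsec:non-Arch}. There is essentially nothing to do beyond bookkeeping: orthogonality of $\{a^j\}$ over $\Lambda_{\tuniv}$ follows from the lemma (the final sentence of its proof explicitly addresses extension of coefficients via \cite{UsherZhang}), and the scaling by $T^{c(a^j,H)}$ is precisely designed to normalize filtration values to $0$.

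First I would record the filtration levels. By axiom (2) of a non-Archimedean filtration,
\[
l_H\bigl(T^{c(a^j,H)}\,a^j\bigr) \;=\; l_H(a^j) - \nu\bigl(T^{c(a^j,H)}\bigr) \;=\; c(a^j,H) - c(a^j,H) \;=\; 0,
\]
for each $0\leq j \leq n$, using that $l_H(a^j)=c(a^j,H)$ and that under the embedding $\Lambda_{\tmin}\hookrightarrow \Lambda_{\tuniv}$ of Section \ref{subsec:non-Arch} the valuation $\nu$ restricts to the action valuation on coefficients, so $\nu(T^\lambda)=\lambda$.

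Next I would verify orthogonality of the rescaled tuple. For any collection $\{\lambda_j\}\subset \Lambda_{\tuniv}$, set $\mu_j=\lambda_j T^{c(a^j,H)}$, so that $\sum_j \lambda_j\bigl(T^{c(a^j,H)}a^j\bigr)=\sum_j \mu_j a^j$. Lemma \ref{lma: orthogonal basis}, applied over $\Lambda=\Lambda_{\tuniv}$, gives
\[
l_H\!\left(\sum_j \mu_j a^j\right) \;=\; \max_j\bigl\{c(a^j,H)-\nu(\mu_j)\bigr\}
\;=\; \max_j\bigl\{c(a^j,H)-\nu(\lambda_j)-c(a^j,H)\bigr\}
\;=\; \max_j\{-\nu(\lambda_j)\},
\]
which, combined with the filtration-level computation above, is exactly the orthogonality identity
\[
l_H\!\left(\sum_j \lambda_j\bigl(T^{c(a^j,H)}a^j\bigr)\right) \;=\; \max_j\bigl\{\,l_H\bigl(T^{c(a^j,H)}a^j\bigr)-\nu(\lambda_j)\bigr\}.
\]
Since each element has filtration $0$, the basis is orthonormal by definition, which completes the argument.

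There is no serious obstacle here: the only subtlety is making sure that Lemma \ref{lma: orthogonal basis} is genuinely available over $\Lambda_{\tuniv}$, which is already handled in its proof by invoking \cite[Sections 2.5, 6.1]{UsherZhang}. Everything else is a one-line manipulation of the valuation axioms.
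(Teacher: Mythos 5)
Your proof is correct and matches the paper's (implicit) approach: the corollary is treated as an immediate consequence of Lemma \ref{lma: orthogonal basis} together with the general normalization observation made at the end of Section \ref{subsec:non-Arch} (that rescaling an orthogonal basis by $T^{\cA(x_i)}$ produces an orthonormal one over $\Lambda_{\tuniv}$). Your bookkeeping with the valuation axioms is exactly the required computation, and you correctly note that the extension of coefficients to $\Lambda_{\tuniv}$ is already justified inside the lemma's proof.
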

	
\begin{lma}\label{lma: gamma vs mult beta_1 on CP^n}
Let $H \in \cH$ be a Hamiltonian with $\phi^1_H = \phi \in \Ham(\C P^n, \om_{FS}),$ then \[\beta_1([pt],H) = \gamma(\phi).\] 
\end{lma}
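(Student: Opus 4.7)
The plan is to compute $\beta_1([pt], H)$ explicitly using the orthonormal basis from Corollary \ref{cor: orthonormal}, and then to identify the resulting quantity with $\gamma(\phi)$ via the Seidel representation for $\C P^n$. Working over $\Lambda_{\tuniv}$, the set $\{\bar{x}_j = T^{c(a^j, H)} a^j\}_{0 \leq j \leq n}$ is orthonormal for the filtration $l_H$. Using the quantum relation $a^{n+1} = q[M]$, multiplication by $[pt] = a^n$ acts on this basis as a monomial map: $m_{[pt]}(\bar{x}_0) = T^{e_0} \bar{x}_n$ with $e_0 = c([M], H) - c([pt], H) = \gamma(H)$, while $m_{[pt]}(\bar{x}_j) = T^{e_j} \bar{x}_{j-1}$ for $j \geq 1$ with $e_j = c(a^j, H) - c(a^{j-1}, H) + \nu(q)$. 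The Smith normal form over $\Lambda_{\tuniv, 0}$ of such a monomial matrix is diagonal with entries $T^{e_j}$ in sorted order, so Proposition \ref{prop: mult spectrum via T^sigma} yields $\beta_1([pt], H) = \min_{0 \leq j \leq n} e_j$.

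The second step will identify each $e_k$ with the spectral norm of a specific lift of $\phi$ to $\tHam(\C P^n, \om_{FS})$. Letting $F \in \cH$ generate the Seidel loop of $\C P^n$ (whose Seidel element in $QH^*$ is the hyperplane class $a$), and letting $F_k$ denote its $k$-fold iterate, with Seidel element $a^k$, the concatenation $H \# F_k$ is another lift of the same $\phi$. Combining the Seidel shift $c(\alpha, H \# F_k) = c(a^k \cdot \alpha, H)$ with the Poincar\'{e} duality $c([M], \overline{A}) = -c([pt], A)$ and the quantum identity $a^{n+k} = q \cdot a^{k-1}$ for $k \geq 1$, a direct computation gives $\gamma(H \# F_k) = c(a^k, H) - c(a^{k-1}, H) + \nu(q) = e_k$; the case $k = 0$ is immediate.

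Combining these ingredients will then yield both inequalities. For the upper bound, each $H \# F_k$ lifts $\phi$, so $\gamma(\phi) \leq \gamma(H \# F_k) = e_k$ for every $k$, giving $\gamma(\phi) \leq \min_k e_k = \beta_1([pt], H)$. For the lower bound, any Hamiltonian $H'$ lifting $\phi$ differs from $H$ by a Hamiltonian loop whose Seidel element has the form $q^j a^k$ (the image of the Seidel representation for $\C P^n$ being cyclic generated by $a$); the same computation shows the $q^j$-factor cancels, so $\gamma(H') = e_k \geq \min_k e_k$, and taking the infimum over lifts yields $\gamma(\phi) \geq \beta_1([pt], H)$. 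The main obstacle is the careful bookkeeping of the Seidel shift and its interaction with the point-class Poincar\'{e} duality; once this is in place, the rest reduces to the matrix computation enabled by the orthonormality from Corollary \ref{cor: orthonormal}.
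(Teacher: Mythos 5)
Your proposal is essentially correct and follows the same route as the paper's own proof. You compute the matrix of $m_{[pt]}$ in the orthonormal basis $\{T^{c(a^j,H)}a^j\}$ supplied by Corollary~\ref{cor: orthonormal}, observe that it is a monomial (diagonal after reordering) matrix over $\Lambda_{\tuniv,0}$, and read off the multiplication spectrum as its exponents $e_j$; you then identify each $e_j$ with $\gamma([F_j])$ for a lift $F_j$ of $\phi$, and conversely show via the Seidel element description of \cite[Proposition 4.2]{EntovPolterovichCalabiQM} that every lift yields one of the $e_j$ as its spectral pseudonorm, so that $\gamma(\phi)$ and $\beta_1([pt],H)$ are minima of the same finite set. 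This is precisely the paper's argument, merely written with the intermediate quantities $e_j = c(a^j,H) - c(a^{j-1},H) + \nu(q)$ and the Seidel-loop iteration spelled out. The one point worth being slightly more careful about, in your version and in the paper's, is the direction of the Seidel shift identity $c(\alpha, H\#F) = c(S_F\cdot\alpha,H)$ (versus $S_F^{-1}$); with the paper's normalizations this is a convention choice that does not affect which set the exponents form, and your derivation of $\gamma(H\#F_k)=e_k$ via Poincar\'e duality $c([M],\overline{H}')=-c([pt],H')$ is consistent with it.
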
	


\begin{proof}[Proof of Lemma \ref{lma: gamma vs mult beta_1 on CP^n}:] By Corollary \ref{cor: orthonormal}, it is enough to consider the matrix of $m_{[pt]}: QH_*(\C P^n, \Lambda) \to QH_*(\C P^n, \Lambda)$ in the orthonormal basis $\{T^{c(a^j,H)} a^j \}.$ Recalling that $[pt] = a^n,$ and $a^{n+1} = T^{A} a^0,$ for $A = A_M = \omega([\C P^1]),$ after changing the order of elements in the basis of the target, this matrix is diagonal of the form \[\Delta = \diag(T^{c(a^0,H) - c(a^{n}, H)}, \ldots, T^{c(a^n,H) - c(a^{n+n}, H)}).\] It is now sufficient, by \cite[Equation 26]{EntovPolterovichCalabiQM}, to note that the Seidel elements $S_{\eta,\sigma}$  for $\eta \in \pi_1 (\Ham(\C P^n))$ and $\sigma$ a section class, are all of the form $a^j T^{C_{\eta,\sigma}},$ for a certain exponent $C_{\eta,\sigma} \in \R$ (see \cite[Proposition 4.2]{EntovPolterovichCalabiQM}) thefore for each Hamiltonian $F \in \cH$ with $\phi^1_F = \phi,$ the spectral pseudonorm $\gamma([F]) = c([M],[F]) + c([M], [F]^{-1}) = c([M],[F]) - c([pt], [F])$ belongs to the set of exponents of the entries of $\Delta,$ and each such exponent can be written this way. Hence $\gamma(\phi) = \beta_1([pt],H),$ as minima of the same finite set.  
\end{proof}



Moving on to the proof itself, set $M = (\CP).$ By Sections \ref{subsec:relations}, \ref{subsec:QH}, \ref{subsec:spec} we now rewrite $\beta_1([pt],H)$ in the Lagrangian fashion $\beta_1([pt],\hat{H},L),$  where $L = \Delta_M \subset M \times M^{-},$ and $\hat{H} \in \cH_{M \times M^{-}}.$ Now for the Zoll metric on $M,$ and its induced metric on $M \times M^{-}$ and $L,$ $d_{C^0}(\id,\phi) < s_* = \frac{1-c}{1+c}$ implies $s = d_{C^0}(L, L') < s_*,$ where $L' = (\phi \times \id) L,$ whence $L' \subset s \cdot U$ for the Weinstein neighborhood $U = M\setminus \Sigma \cong D^* L.$ Here it is convenient to take $M = (\CP) \times (\C P^n,{-}\om_{FS}),$ while the normalization coming from the Zoll construction would give the twice smaller symplectic form $(\C P^n,\frac{1}{2} \om_{FS}) \times (\C P^n,-\frac{1}{2} \om_{FS}).$ Theorem \ref{thm:Viterbo-sharper}, \eqref{eq:high bar-length spectrum out and in}, and Lemma \ref{lma: gamma vs mult beta_1 on CP^n} imply \[\gamma(\phi) = \beta_1([pt],\hat{H},L) = \beta_1([pt],\ul L, \hat{H}^* \ul L; M) = \beta_1([pt],\ul L, \hat{H}^* \ul L; U) = \gamma(L, \phi_{\hat{H}}^{-1}L; U) \leq s \cdot \frac{c(1+c)}{1-c}.\]
%
This finishes the proof, since if $d_{C^0}(\id,\phi) \geq s_*,$ we obtain $\frac{c(1+c)}{1-c} \cdot d_{C^0}(\id,\phi) \geq c,$ however by \cite[Theorem G]{KS-bounds} we have $\gamma(\phi) \leq c,$ for all $\phi \in \Ham(\CP).$ 

\end{proof}


We note that a very similar argument allows to prove the following statement.


\begin{thm}\label{thm:C^0 rel}
Let $L \in \cl V$ be embedded in a symplectic manifold $M$ as a Lagrangian submanifold that is either weakly exact or wide monotone with $N_L > \dim L$ and $A_L = 1/2.$ Let $r \cdot D^*L$ for $r< s_* = \frac{1-c_L}{1+c_L}$ be embedded symplectically into $M$ as a Weinstein neighborhood of $L.$
Let $L'$ be Hamiltonian isotopic to $L$ in $M.$ If $L' \subset r \cdot D^*L,$  \[\gamma(L',L) \leq r\cdot \frac{c_L(1+c_L)}{2(1-c_L)},\] where $\gamma(L',L)$ is computed inside $M.$
\end{thm}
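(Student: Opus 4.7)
The plan is to mirror the argument in the proof of Theorem \ref{thm:gamma is C^0 continuous on CP^n} in the relative (Lagrangian) setting, with the hypothesis $L' \subset r \cdot D^* L$ replacing the $C^0$-smallness assumption $d_{C^0}(\id, \phi) < s_*$. Write $L' = (\phi^1_H)^{-1} L$ for some $H \in \cH_M$ and fix an anchored brane $\ul L$ decorating $L,$ with all data supported inside the Weinstein neighborhood $U = D^*L \hookrightarrow M,$ setting $\ul L' = H^* \ul L$.

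The first step identifies the spectral norm with the smallest bar-length of the multiplication-by-$[pt]$ spectrum. Since $L \in \cl V,$ we have $QH(L) = \Lambda[a]/(a^{n_L+1}=q),$ and $[pt] = a^{n_L}$ is a quantum root of unity with codegree $n_L < N_L.$ The analogs of Lemma \ref{lma: orthogonal basis} and Corollary \ref{cor: orthonormal} produce a $\Lambda_{\tuniv}$-orthonormal basis of $QH(L)$ in which the matrix of $m_{[pt]}$ takes a cyclic shift form; a Seidel-element shift argument for Lagrangian spectral invariants, along the lines of Lemma \ref{lma: gamma vs mult beta_1 on CP^n}, then yields
\[\gamma(L, L'; M) = \beta_1([pt], \ul L, \ul L'; \cD; M).\]

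The second step is excision from $M$ to $r \cdot U$. Arrange the brane and perturbation data to be supported in $r \cdot U,$ and perform neck-stretching near $\partial U_{r'}$ for some $r' \in (r,1)$ close to $1$. Propositions \ref{prop: uniform energy bound} and \ref{prop: stretched complex first} together with Remark \ref{rmk: stretched complex - actual lower bound on energy} guarantee that all relevant Floer and pearly trajectories of bounded index either stay inside $U_{r'}$ or acquire energy at least $r' \cdot A_L.$ The boundary depth of $(\ul L, \ul L'; \cD)$ computed inside $r \cdot U$ is bounded by $r \cdot c_L \cdot A_L$ via \cite[Theorem G]{KS-bounds} and rescaling; the hypothesis $r < s_* = \frac{1-c_L}{1+c_L}$ ensures that the quantitative gap conditions of Proposition \ref{prop: deformation}, case \ref{case:equality for high}, are satisfied, giving
\[\beta_1([pt], \ul L, \ul L'; \cD; M) = \beta_1([pt], \ul L, \ul L'; \cD; r \cdot U).\]

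Third, the $(1/r)$-scalar action on the fibers of $T^*L$ gives $\beta_1(\cdots; r \cdot U) = r \cdot \beta_1(\cdots; U)$ for the rescaled Lagrangian $\tilde L' = (1/r) L' \subset U.$ Inside $U,$ $L$ is exact, $HF(\ul L, \tilde{\ul L'}; U) \cong H_*(L;\bK),$ multiplication by $[pt]$ has rank one, and $\beta_1$ equals $\gamma(L, \tilde L'; U).$ This last quantity is controlled by Theorem \ref{thm:Viterbo-sharper} applied inside $U$ via the Zoll-cut embedding $U \subset M' \in \cl W$ (extending a Hamiltonian isotopy from $L$ to $\tilde L'$ inside $U$ trivially to $M'$), yielding $\gamma(L, \tilde L'; U) \leq \frac{c_L(1+c_L)}{2(1-c_L)},$ and concatenating the three steps produces the desired bound. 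The main obstacle is the first-step equality, which requires the Seidel-element-style shift argument for Lagrangian spectral invariants in the wide monotone case; in the weakly exact case, multiplication by $[pt]$ is of rank one a priori, and the equality is essentially tautological.
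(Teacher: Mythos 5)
Your overall architecture follows the paper's sketch: the analogues of Lemma~\ref{lma: orthogonal basis} and Lemma~\ref{lma: gamma vs mult beta_1 on CP^n} (which the paper records as Lemma~\ref{lma:rel orthogonal basis} and Lemma~\ref{lma:rel beta_1 and gamma}, with relative Seidel invariants replacing the absolute ones), followed by the excision from $M$ to $r\cdot U$ and the $\R_{>0}$-rescaling on the fibers of $T^*L$. Steps one through three are in line with what the paper does.

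Step four, however, contains a genuine gap. You write that $\gamma(L,\tilde L';U)$ is controlled by Theorem~\ref{thm:Viterbo-sharper} applied inside $U$ via the Zoll-cut embedding $U\subset M'\in\cl W$, ``extending a Hamiltonian isotopy from $L$ to $\tilde L'$ inside $U$ trivially to $M'$.'' No such isotopy is available. The hypothesis of Theorem~\ref{thm:C^0 rel} only gives a Hamiltonian isotopy from $L$ to $L'$ inside the \emph{arbitrary} ambient manifold $M$; after the compactly supported rescaling, $\tilde L'$ is Hamiltonian isotopic to $L$ inside $M$, not inside $U$ and not inside the Zoll-cut manifold $M'$. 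Asserting the latter is exactly the nearby Lagrangian conjecture issue flagged in the remark following Conjecture~\ref{conj:Viterbo-generalized}, and the theorem is stated precisely so as not to assume it. So Theorem~\ref{thm:Viterbo-sharper} cannot be invoked as stated; its hypotheses are not met.

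The way the paper's argument actually closes the loop is by running the analogue of the chain in the proof of Theorem~\ref{thm:gamma is C^0 continuous on CP^n} entirely with respect to the given ambient $M$: Lemma~\ref{lma:rel orthogonal basis} guarantees the orthogonal basis $\{a^0,\dots,a^n\}$ of $(QH_*(L),l_H)$ \emph{with Floer data in $M$}, Lemma~\ref{lma:rel beta_1 and gamma} gives $\gamma(L',L;M)=\beta_1([pt],\LH;M)$, and then the excision and rescaling transfer this to $U$ and back, with the multiplication-spectrum and boundary-depth control coming from the algebraic structure of $QH(L;M)$ (the quantum root of unity / rank-one dichotomy) rather than from an appeal to Theorem~\ref{thm:Viterbo-sharper} in a manifold where the required isotopy is missing. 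You should replace your step four with this intrinsic argument; as written, the invocation of Theorem~\ref{thm:Viterbo-sharper} presupposes the conclusion of a conjecture that the theorem is designed to avoid.
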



\begin{rmk}
An example of such an embedding is given by the corresponding $M \in \cl W.$ Theorem \ref{thm:C^0 rel} is a $C^0$-continuity statement for the Lagrangian spectral norm in a closed symplectic manifold. Statements of this kind were hitherto unknown. 
\end{rmk}

The proof, as the one above, rests on the following two lemmas.

\begin{lma}\label{lma:rel orthogonal basis}
		Let $L \in \cl V,$ $H \in \cH_M.$ There exists $a \in \displaystyle QH_{{n_L} - k}(L)$ where $n_L = \dim L,$ and $k|n_L$ such that $\{[L] = a^0,\ldots,a^n\}$ for $n = \frac{n_L}{k}$ is an orthogonal $\Lambda$-basis of $(QH_*(L),l_H).$
	\end{lma}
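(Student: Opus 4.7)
The plan is to follow the template of the proof of Lemma \ref{lma: orthogonal basis} verbatim, upgrading the hyperplane class on $\C P^n$ to a quantum generator for $QH(L)$. First, I would recall from the introduction (cf.\ \cite{BiranCorneaRigidityUniruling,SmithPencils,KS-bounds}) that for every $L\in\cl V$ the Lagrangian quantum homology is
\[
 QH(L;\Lambda_{L,\tmin}) \;\cong\; \Lambda_{L,\tmin}[a]/(a^{n+1}-q),
\]
where $a\in QH_{n_L-k}(L)$ with $k:=n_L/n\in\{1,2,4,n_L\}$, and equals $[\R P^{n-1}]$, $[\C P^{n-1}]$, $[\HH P^{n-1}]$, $[pt]$ for $L=\R P^n,\C P^n,\HH P^n,S^n$ respectively. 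In particular $S = \{[L]=a^0, a^1, \dots, a^n\}$ is a $\Lambda_{L,\tmin}$-basis of $QH(L)$, and its elements have pairwise distinct degrees modulo $N_L$: indeed $\deg(a^j)=n_L-jk$ for $0\le j\le n$, while $N_L=k(n+1)$, which is a direct check in each of the four families.

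The core step is a genericity statement: for every non-degenerate $(L,H)$ one can choose a perturbation datum $\cD=(K^H,J^H)$ with $K^H$ as $C^2$-small as necessary, so that $(H^{\cD},L)$ is non-degenerate and, for every pair of distinct chords $x,y\in \cO(\LH;\cD)$ and every choice of cappings $\bar x,\bar y$,
\[
 \cA_{\LH;\cD}(\bar x)-\cA_{\LH;\cD}(\bar y)\;\notin\; A_L\cdot\Z.
\]
This is precisely analogous to the genericity claim used in the proof of Lemma \ref{lma: orthogonal basis}: the reduced action $\hat\cA(x)=\cA(\bar x)-\kappa_L\mu_{CZ}(\bar x)$ is well-defined modulo $A_L\cdot\Z$ as a function of the chord $x$ alone (by monotonicity $\omega_L=\kappa_L\mu_L$), and the subset of admissible $K^H$ on which two reduced actions coincide modulo $A_L\cdot\Z$ is a countable union of codimension-one sets; a standard Baire argument then produces the required $K^H$.

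With the claim in hand, by spectrality the invariant $c(L;a^j,H;\cD)$ is realized on a capped chord whose Conley--Zehnder index class modulo $N_L$ equals $\deg(a^j)\equiv n_L-jk \pmod{N_L}$. Since these classes are pairwise distinct, and cappings of a fixed chord differ by multiples of $N_L$ in CZ index, distinct $j$'s realize this infimum on distinct chords; the genericity statement then forces
\[
 c(L;a^j,H;\cD)-c(L;a^{j'},H;\cD)\;\notin\; A_L\cdot\Z \qquad (j\neq j').
\]
For any nonzero $\Lambda$-linear combination $w=\sum_j\lambda_j a^j$, the valuations $\nu(\lambda_j)\in A_L\cdot\Z$, so the numbers $c(L;a^j,H;\cD)-\nu(\lambda_j)$ are mutually distinct, and property \eqref{eq:max property filtration} of the non-Archimedean filtration $l_{\LH;\cD}$ yields
\[
 l_{\LH;\cD}(w) \;=\; \max_j\bigl\{\,l_{\LH;\cD}(a^j)-\nu(\lambda_j)\,\bigr\},
\]
i.e.\ the orthogonality of $S$.

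Finally, I would pass from non-degenerate $(H,\cD)$ with the genericity property to arbitrary $H\in\cH_M$ by the $C^0$-continuity of Lagrangian spectral invariants from Section \ref{subsec:spec}, and extend coefficients from $\Lambda_{L,\tmin}$ to any larger non-Archimedean field by \cite[Sections 2.5, 6.1]{UsherZhang}, exactly as in the proof of Lemma \ref{lma: orthogonal basis}. The only technical point I expect to require care is the genericity claim, which in the Lagrangian setting cannot be arranged merely by adding a constant to $H$ and genuinely demands a transversality argument in the space of admissible Floer perturbation data; this will be the main obstacle.
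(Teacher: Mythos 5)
Your proposal is correct and follows essentially the same route as the paper, which only states that the proof of Lemma~\ref{lma:rel orthogonal basis} is similar to that of Lemma~\ref{lma: orthogonal basis}; you carry out that adaptation faithfully, and the degree computation $\deg(a^j)=n_L-jk$ with $N_L=k(n+1)$ correctly shows the $a^j$ lie in pairwise distinct degrees modulo $N_L$, so spectral invariants are realized on distinct chords and the genericity of actions modulo $A_L\cdot\Z$ forces orthogonality via the maximum property. Your closing concern that the genericity step is ``the main obstacle'' is somewhat overstated: the same brief perturbation-of-actions argument is invoked in the absolute case of Lemma~\ref{lma: orthogonal basis} (and can be arranged, e.g., by taking $K^H$ locally constant with generic constants near each chord), so the Lagrangian case is no harder in this respect.
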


	We remark that the class $a$ is given by $a = [\R P^{n-1}], [\C P^{n-1}],[\bH P^{n-1}],[pt]$ for $L = \R P^n, \C P^n, \bH P^n, S^n$ respectively.
	
	\begin{lma}\label{lma:rel beta_1 and gamma}
		Let $L \in \cl V,$ and $L' = \phi^1_H L$ for $H \in \cH_{M}.$ Then \[\beta_1([pt],\LH) = \beta_1([pt],\ul L,{H^* \ul L})= \gamma(L',L)\] for each $\ul L$ decorating $L.$
	\end{lma}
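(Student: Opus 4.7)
The first equality $\beta_1([pt],\LH) = \beta_1([pt],\ul L, H^*\ul L)$ is structural: it follows from the isomorphism of filtered Floer persistence modules $V_r(\LH;\cD) \cong V_r(\ul L, H^* \ul L;\cD)$ recalled in equation \eqref{eq:iso right} of Section \ref{subsec:relations}, together with the fact, noted in the opening paragraph of ``Relations between Floer homologies, and quantum homology'' in Section \ref{subsec:QH}, that this isomorphism intertwines the module action of $QH(L)$. Consequently the non-Archimedean filtration functions on $QH(L)$ induced by the two spectral invariants agree, so the multiplication spectra of $[pt]$ are identical.

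For the second equality I mirror the proof of Lemma \ref{lma: gamma vs mult beta_1 on CP^n}. By Lemma \ref{lma:rel orthogonal basis}, the set $\{[L]=a^0,a,\ldots,a^n\}$ with $n=n_L/k$ is an orthogonal $\Lambda$-basis of $(QH(L),l_\LH)$, and extending coefficients to $\Lambda_{\tuniv}$ produces the orthonormal basis $\{T^{c(L;a^j,H)}a^j\}$. Using $[pt]=a^n$ and the ring relation $a^{n+1}=q$ in $QH(L)\cong \Lambda[a]/(a^{n+1}=q)$, the matrix of $m_{[pt]}$ in this basis is, after a cyclic relabelling of the target, diagonal with valuation entries
\[
e_j = c(L;a^j,H) - c(L;a^{n+j},H),\qquad j=0,\ldots,n,
\]
where for $j\geq 1$ one uses $a^{n+j}=q\cdot a^{j-1}$. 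The entry $e_0 = c(L;[L],H) - c(L;[pt],H)$, combined with Poincar\'{e} duality for spectral invariants on wide monotone Lagrangians (giving $c(L;[L],\overline{H}) = -c(L;[pt],H)$), equals $\gamma(\LH)$. Since $\beta_1([pt],\LH) = \min_j e_j \leq e_0 = \gamma(\LH)$ for every $H$ with $\phi^1_H L=L'$, we immediately obtain $\beta_1([pt],\LH) \leq \gamma(L',L)$.

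The opposite inequality is the main step, and will follow once we show that every entry $e_j$ with $j\geq 1$ is realized as $\gamma(L,H')$ for some Hamiltonian $H'$ with $\phi^1_{H'}L=L'$. This is the relative analogue of the Seidel-element input of Lemma \ref{lma: gamma vs mult beta_1 on CP^n}: composing $H$ with a loop $\eta\in\pi_1(\Ham(M,\om))$ yields another such $H'$, and the effect on the spectral invariants $c(L;a^i,H)$ is a shift governed by the action of the Seidel element $S_\eta\in QH(M)$ on $QH(L)$ through the $QH(M)$-module structure. For $M\in\cl W$ one expects $S_\eta$ to be of the form $a^{j_\eta} T^{C_\eta}$, so that the resulting $\pi_1(\Ham(M,\om))$-action cyclically permutes the entries $\{e_j\}$. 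Assuming this, the set $\{\gamma(L,H'):\phi^1_{H'}L=L'\}$ coincides with $\{e_j:0\leq j\leq n\}$, and taking minima yields the desired equality. The main technical obstacle is the explicit verification of this cyclic Seidel permutation in the relative setting, by a case-by-case analysis of the $QH(M)$-module action on $QH(L)$ for each $(L,M)\in\cl V\times\cl W$.
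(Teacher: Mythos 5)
Your proposal follows the paper's intended argument. The paper proves this lemma by declaring it ``similar to that of Lemma \ref{lma: gamma vs mult beta_1 on CP^n}, with the only modification being running the argument of \cite[Proposition 4.2]{EntovPolterovichCalabiQM} for relative Seidel invariants \cite{huLalonde,huLalondeLeclercq,hyvrier,KS-bounds}.'' Your first paragraph correctly identifies the structural source of the equality $\beta_1([pt],\LH)=\beta_1([pt],\ul L,H^*\ul L)$, and your diagonal-matrix computation of $m_{[pt]}$ in the orthonormal basis, together with the Poincar\'e duality identity $c(L;[L],\overline H)=-c(L;[pt],H)$, is exactly the mirror of Lemma \ref{lma: gamma vs mult beta_1 on CP^n} that the paper has in mind.

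Two points deserve care. First, to pass from $\beta_1([pt],\LH)\le\gamma(\LH)$ to $\beta_1([pt],\LH)\le\gamma(L',L)=\inf_{H'}\gamma(H',L)$, you implicitly use that the multiplication spectrum $\{e_j\}$ depends on $H$ only through $L'=\phi^1_H(L)$; this is indeed true, since $\{e_j\}$ is a shift-invariant of the barcode of $V_*(\ul L,H^*\ul L)$ together with its $QH(L)$-module action (Section \ref{subsec: Floer persistence}), but it is worth saying. Second, and more substantively, you phrase the transitivity step in terms of absolute Seidel elements $S_\eta\in QH(M)$ for $\eta\in\pi_1(\Ham(M,\omega))$ acting on $QH(L)$ via the module structure. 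The full set of Hamiltonians $H'$ with $\phi^1_{H'}L=L'$ is parametrized by the \emph{relative} fundamental group of $(\Ham(M),\Ham_L(M))$, and the operators that shift $c(L;a^i,-)$ are the \emph{relative} Seidel elements in $QH(L)$ itself; this is what the references \cite{huLalonde,huLalondeLeclercq,hyvrier,KS-bounds} compute for $L\in\cl V,M\in\cl W$, showing they are of the form $a^{j}T^{C}$ and act cyclically on the orthonormal basis. With that correction, the ``main technical obstacle'' you flag is not an open gap but is precisely the content of the cited works combined with \cite[Proposition 4.2]{EntovPolterovichCalabiQM}; otherwise, your argument is the paper's proof.
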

	
 The proof of Lemma \ref{lma:rel orthogonal basis} is similar to that of Lemma \ref{lma: orthogonal basis}, and the proof of Lemma \ref{lma:rel beta_1 and gamma} is similar to that of Lemma \ref{lma: gamma vs mult beta_1 on CP^n}, with the only modification being running the argument of \cite[Proposition 4.2]{EntovPolterovichCalabiQM} for relative Seidel invariants \cite{huLalonde,huLalondeLeclercq,hyvrier,KS-bounds}.

\begin{proof}[Proof of Theorem \ref{thm:LSV}] 
Let $\phi = \phi^1_H.$ By \cite{LSV-conj}, it is enough to prove that the number of endpoints of $\cB_0(H),$ as defined with coefficients in $\Lambda_{\tmon},$ is equal to the number $X$ of endpoints of $\cB_{\Z}(H) = \sqcup_{r \in \Z} \cB_r([H]),$ as defined with coefficients in $\Lambda_{M,\tmin},$ that lie in the interval 
$[0,A_M),$ for $A_M = 2\kappa N_M.$ It is easy to see that 
\[\cB_0(H) = \sqcup_{0 \leq k < 2N_M} \cB_k(H;\Lambda_{M,\tmin})[-k\cdot\kappa].\]

Now, denoting for $\cB \in \barc$ by $X(\cB)$ the number of its endpoints, and $X^{+}(\cB),$ $X^{-}(\cB),$ the number of its upper, respectively, lower endpoints, we have \[X(\cB_0(H)) = \sum_{0 \leq k < 2N_M} X(\cB_k(H)).\] In other words $X(\cB_0(H))$ is the number of endpoints of all bars in $\cB_{\Z}([H])$ whose index is in $[0,2N_M).$ It remains to note that multiplication by the quantum variable $q \in \Lambda_{M,\tmin}$ gives an isomorphism $\cB_{r}(H) \cong \cB_{r-2N_M} (H)[-A_M].$ This means that the group $\Z$ acts on the multi-set of bars in $\cB_{\Z}(H).$ The quotient is easily identified with the multi-set of bars in $\cB_0(H).$ It is also easily identified with the multi-set $\cB_{-}$ of those bars in $\cB_{\Z}(H)$ whose lower end lies inside $[0,A_M).$ In particular, we have $X(\cB_0(H)) = X(\cB_{-}) = X^{+}(\cB_{-}) + X^{-}(\cB_{-}).$ Consider the multi-set $\cB_{+}$ of those bars in $\cB_{\Z}(H)$ whose uppper end lies inside $[0,A_M).$ Note that $\cB_{+}$ consists entirely of finite bars, hence $X^{+}(\cB_{+}) = X^{-}(\cB_{+}).$ By definition $X = X^{+}(\cB_{+}) + X^{-}(\cB_{-}),$ hence $X = X^{-}(\cB_{+}) + X^{-}(\cB_{-}).$ It is therefore enough to show that $X^{-}(\cB_{+}) = X^{+}(\cB_{-}).$ Since each bar is either infinite, in which case it contributes to neither $X^{+}(\cB_{-})$ nor $X^{-}(\cB_{+}),$ or finite, in which case it has both an upper and a lower end, it is enough to show that $X^{+}(\cB_{-} \setminus \cB_{+}) = X^{-}(\cB_{+} \setminus \cB_{-}).$ This identity, however, is evident, as both $\cB_{-} \setminus \cB_{+}$ and $\cB_{+} \setminus \cB_{-}$ are isomorphic to the quotient by the $\Z$-action of the invariant sub-multiset $\cB_{\Z}([H])^{\mrm{fin},\geq A_M} = \sqcup_{r \in \Z} \cB_r([H])^{\mrm{fin},\geq A_M} \subset \cB_{\Z}([H])$ consisting of finite bars whose length is at least $A_M.$

%

\end{proof}


\begin{proof}[Proof of Corollary \ref{cor:qm intr}.]
Theorem \ref{thm:Viterbo} implying that $\sigma = c([L],-)$ is a quasi-morphism follows by the same argument as the one for \cite[Proposition 3.5.3]{PolterovichRosen}, to wit: by the triangle inequality for spectral invariants
 \[c([L],F \#G) \leq c([L],F) + c([L],G),\] 
\[c([L],F) \leq c([L],F\#G) + c([L],\overline{G}),\] 
so that \[c([L],F) + c([L],G) - \gamma(L,[G]) \leq c([L],F \# G),\] hence Theorem \ref{thm:Viterbo} finishes the proof. Inequality \eqref{eq:zeta pb} is proven in the same way as \cite[Theorem 1.4]{EntPolZap-poisson}. The fact that $\mu$ is unbounded and vanishes on diffeomorphisms with displaceable support was proven in \cite[Theorem 1.3]{MonznerVicheryZapolsky}.
\end{proof}

\section{Discussion}
 
We discuss the case of $L$ given by a product of Lagrangians $L_i \in \cl V,$ with $N_{L_i} = N$ for all $1 \leq i \leq m,$ so that $L \subset M,$ is monotone, where $M$ is given by the product of the respective $\cl M_i \in \cl W.$ The space $L = T^n$ falls into this class of examples. The methods described in this paper yield the following upper bound on the spectral norm. This is the best result we were able to obtain, while staying in the setting of monotone Lagrangian submanifolds. Investigation of the non-monotone case shall be carried out elsewhere. 

\begin{prop}
Let $L' \subset U = D^*L_1 \times \ldots D^* L_m$ be Hamiltonian isotopic to $L = L_1 \times \ldots \times L_m,$   inside $M = M_1 \times \ldots \times M_m.$ Then for $c_j = c_{L_j},$ \[\gamma(L,L';\bK, U) \leq \frac{\sum_{j=1}^{m} c_j}{1-\max_j c_j} \cdot (A_L + \beta(L,L';\bK,U)).\] 
\end{prop}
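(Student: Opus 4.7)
The plan is to adapt the proof of Theorem \ref{thm:Viterbo-sharper} from Section \ref{sec:Viterbo}, exploiting the fact that $[pt] = [pt_1] \cdots [pt_m]$ in $QH(L) = \bigotimes_j QH(L_j)$ factors as a product of quantum roots of unity, each $[pt_j]$ satisfying $[pt_j]^{n_{L_j}+1} = q^{n_{L_j}}$ via the common minimal Maslov number $N$. First I would extend Lemma \ref{lma:rel beta_1 and gamma} to this product setting, which goes through since a product of wide monotone Lagrangians is wide: this yields $\gamma(L,L';U) = \beta_1([pt], \ul L, \ul L'; \cD, U)$ for $\ul L' = H^* \ul L$. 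Rescaling $L' \to L'_s = s \cdot L' \subset s \cdot U$ and applying the SFT neck-stretching of Section \ref{sec:SFT} along $\partial(sU)$, the complement of $sU$ decomposes as $\Sigma = \bigcup_j \Sigma^{(j)}$ with $\Sigma^{(j)} = M_1 \times \cdots \times \Sigma_j \times \cdots \times M_m$, and, using exactness of $L$ in each $M_j \setminus \Sigma_j$, every Floer curve contributing to the relevant complexes either lies in $sU$ or hits some $\Sigma^{(j)}$ with energy at least $(1-s) A_L$.

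Two families of bounds then feed into the deformation argument. First, the telescoping identity
\[
c(v) - c([pt] \cdot v) = \sum_{j=1}^m \big[c(v_{j-1}) - c([pt_j] \cdot v_{j-1})\big], \qquad v_0 = v, \; v_j = [pt_j] \cdot v_{j-1},
\]
combined with the averaging from the proof of Proposition \ref{prop:root of unity bound}(1) applied factor-by-factor across a full orbit of $v$ under the commuting cyclic actions $\langle [pt_j] \rangle$, gives the aggregate bound $\beta_1([pt], \ul L, \ul L'; \cD, M) \leq \sum_j c_j A_L$. Second, Proposition \ref{prop:root of unity bound}(2) applied to each $[pt_j]$ individually in $QH(L)$ gives the per-factor bound $\beta_1([pt_j], \ul L, \ul L'; \cD, M) \leq c_j A_L$. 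I would then apply Proposition \ref{prop: deformation}(2) iteratively along the composition $m_{[pt]} = m_{[pt_m]} \circ \cdots \circ m_{[pt_1]}$, transferring the per-factor bounds from $M$ to $sU$: at the $j$-th step the deformation of $m_{[pt_j]}$ has valuation $\geq (1-s) A_L$ and the differential deformation has valuation $\geq s \cdot \beta(L,L';U)$, so the hypothesis of Proposition \ref{prop: deformation}(2) reads $c_j A_L < (1-s) A_L - s\beta(L,L';U)$, giving the joint constraint
\[
s \cdot (A_L + \beta(L,L';U)) < (1 - \max_j c_j) A_L.
\]
Under this constraint, the per-factor transfer and the telescope combine to yield $\beta_1([pt]; sU) \leq \sum_j c_j A_L$; by the scaling property $\beta_1([pt]; sU) = s \cdot \gamma(L,L';U)$, optimizing $s \to (1 - \max_j c_j) A_L / (A_L + \beta)$ gives $\gamma(L,L';U) \leq \frac{\sum_j c_j}{1-\max_j c_j}(A_L + \beta(L,L';U))$.

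The principal obstacle is precisely this iterative application of Proposition \ref{prop: deformation}(2): applying it directly to the single cone $Cone_\sigma([pt])$ would require the stronger hypothesis $\sum_j c_j A_L < (1-s) A_L - s\beta$, yielding a denominator of $1 - \sum_j c_j$ that is vacuous once $\sum_j c_j \geq 1$. The improvement to $1 - \max_j c_j$ requires iterating the deformation proposition along the composition $m_{[pt]} = m_{[pt_m]} \circ \cdots \circ m_{[pt_1]}$ rather than applying it to the composition directly, and in particular tracking that the intermediate vectors $v_{j-1}$ appearing in the telescope remain compatible with the filtered structure at each intermediate step, together with verifying the homology isomorphism hypothesis of Proposition \ref{prop: deformation}(2) at each stage, which follows from the Floer invariance of $(\ul L, \ul L'_s)$ between $sU$ and $M$ once the relevant component of $\Sigma$ has been excised.
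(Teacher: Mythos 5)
Your overall strategy matches the paper's own hint exactly: decompose $[pt]\in QH(L)$ into the per-factor quantum roots of unity $a^{(j)} = [L_1]\otimes\cdots\otimes[pt_j]\otimes\cdots\otimes[L_m]$, use one cone $Cone_\sigma(a^{(j)},\dots)$ per factor so that each application of Proposition~\ref{prop: deformation}(2) is only constrained by $c_j A_L < (1-s)A_L - s\beta(L,L';U)$, and extract the constraint $s(A_L+\beta)<(1-\max_j c_j)A_L$ from the largest $c_j$. The final arithmetic is also correct.

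There is, however, a genuine gap in the step where you pass from the per-factor bounds \emph{in $sU$} to the conclusion $\gamma(L,L'_s;sU)\leq\sum_j c_j A_L$. Your telescope writes $c([L];sU)-c([pt];sU)=\sum_j\bigl[c(v_{j-1};sU)-c(a^{(j)}v_{j-1};sU)\bigr]$ with $v_{j-1}=a^{(1)}\cdots a^{(j-1)}[L]$, but the bound $\beta_1(a^{(j)};sU)\leq c_j A_L$ only controls the \emph{minimal} finite spectral value of $m_{a^{(j)}}$ in $sU$; it says nothing about the specific term $c(v_{j-1})-c(a^{(j)}v_{j-1})$. Likewise, the ``averaging from Proposition~\ref{prop:root of unity bound}(1) across an orbit'' does not deliver a bound on a prescribed telescope term, and the aggregate bound $\beta_1([pt];M)\leq(\sum_j c_j)A_L$ (which in any case follows immediately from applying Case~(2) of Proposition~\ref{prop:root of unity bound} to $[pt]$ itself, since $[pt]$ has codegree $\sum_j n_{L_j}$) is never the object that gets transferred. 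The missing observation that closes the gap is a counting coincidence: Case~(2) applied to $a^{(j)}$ in $M$ yields $\lceil B/(n_j+1)\rceil = \prod_{i\neq j}(n_i+1)$ spectral values bounded by $c_j A_L$, and this is \emph{exactly} the rank of the classical cap action of $a^{(j)}$ on $H_*(L)$, which is what $m_{a^{(j)}}$ becomes in $sU$. Running Proposition~\ref{prop: deformation}(2) with $l=\lceil B/(n_j+1)\rceil$ (not $l=1$) then transfers \emph{all} finite high bars, so that in $sU$ every finite spectral value of $m_{a^{(j)}}$ is $\leq c_j A_L$; in particular each telescope term, being one of these, is $\leq c_j A_L$, and the sum closes. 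Relatedly, the appeal to an extension of Lemma~\ref{lma:rel beta_1 and gamma} to the product is both unnecessary and delicate (the Seidel-element argument used there relies on $QH$ being cyclic over $\Lambda$, which fails for a product): in $sU$ the operator $m_{[pt]}$ has rank one, so $\gamma(L,L'_s;sU)=c([L];sU)-c([pt];sU)$ is identified with $\beta_1([pt];sU)$ directly from the graded orthogonal-basis structure, no Seidel input required.
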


Since providing a uniform bound on $\beta(L,L';\bK,U)$ seems to be essentially as difficult as doing so for $\gamma(L,L';\bK,U),$ we shall not present the proof of this statement in detail, merely remarking that one should use the cones $Cone_{\sigma}(a,H,L;\cD)$ for quantum roots of unity $a$ given by $a = a_1 \otimes \ldots \otimes a_m,$ with $a_i = [L]$ for all $i \neq j,$ and $a_j = [pt].$

\bibliography{bibliographyVZ}

\begin{thebibliography}{100}

\bibitem{Abouzaid-nearbyMaslov}
M.~Abouzaid.
\newblock Nearby {L}agrangians with vanishing {M}aslov class are homotopy
  equivalent.
\newblock {\em Invent. Math.}, 189(2):251--313, 2012.

\bibitem{AK-simplehomotopy}
M.~Abouzaid and T.~Kragh.
\newblock Simple homotopy equivalence of nearby {L}agrangians.
\newblock {\em Acta Math.}, 220(2):207--237, 2018.

\bibitem{Team}
D.~{Alvarez-Gavela}, V.~{Kaminker}, A.~{Kislev}, K.~{Kliakhandler},
  A.~{Pavlichenko}, L.~{Rigolli}, D.~{Rosen}, O.~{Shabtai}, B.~{Stevenson}, and
  J.~{Zhang}.
\newblock Embeddings of free groups into asymptotic cones of hamiltonian
  diffeomorphisms.
\newblock {\em J. Topol. Anal.}, Online Ready, 2018.
\newblock Available at https://doi.org/10.1142/S1793525319500213.

\bibitem{Audin-Zoll}
M.~Audin.
\newblock Lagrangian skeletons, periodic geodesic flows and symplectic
  cuttings.
\newblock {\em Manuscripta Math.}, 124(4):533--550, 2007.

\bibitem{AurouxGayetMohsen}
D.~Auroux, D.~Gayet, and J.-P. Mohsen.
\newblock Symplectic hypersurfaces in the complement of an isotropic
  submanifold.
\newblock {\em Math. Ann.}, 321(4):739--754, 2001.

\bibitem{BauLes}
U.~Bauer and M.~Lesnick.
\newblock Induced matchings and the algebraic stability of persistence
  barcodes.
\newblock {\em J. Comput. Geom.}, 6(2):162--191, 2015.

\bibitem{BesseClosed}
A.~L. Besse.
\newblock {\em Manifolds all of whose geodesics are closed}, volume~93 of {\em
  Ergebnisse der Mathematik und ihrer Grenzgebiete [Results in Mathematics and
  Related Areas]}.
\newblock Springer-Verlag, Berlin-New York, 1978.
\newblock With appendices by D. B. A. Epstein, J.-P. Bourguignon, L.
  B\'erard-Bergery, M. Berger and J. L. Kazdan.

\bibitem{Biran:Barriers}
P.~Biran.
\newblock Lagrangian barriers and symplectic embeddings.
\newblock {\em Geom. Funct. Anal.}, 11(3):407--464, 2001.

\bibitem{Biran:Nonintersections}
P.~Biran.
\newblock Lagrangian non-intersections.
\newblock {\em Geom. Funct. Anal.}, 16(2):279--326, 2006.

\bibitem{Bi-Co:qrel-long}
P.~Biran and O.~Cornea.
\newblock Quantum structures for {L}agrangian submanifolds.
\newblock {\em Preprint}, 2007.
\newblock Available at arXiv:0708.4221 [math.SG].

\bibitem{BiranCorneaLagrangianQuantumHomology}
P.~Biran and O.~Cornea.
\newblock A {L}agrangian quantum homology.
\newblock In {\em New perspectives and challenges in symplectic field theory},
  volume~49 of {\em CRM Proc. Lecture Notes}, pages 1--44. Amer. Math. Soc.,
  Providence, RI, 2009.

\bibitem{BiranCorneaRigidityUniruling}
P.~Biran and O.~Cornea.
\newblock Rigidity and uniruling for {L}agrangian submanifolds.
\newblock {\em Geom. Topol.}, 13(5):2881--2989, 2009.

\bibitem{BC-private}
P.~Biran and O.~Cornea.
\newblock Private communication, 2018.

\bibitem{BiranCorneaS-Fukaya}
P.~Biran, O.~Cornea, and E.~Shelukhin.
\newblock Lagrangian shadows and triangulated categories.
\newblock Preprint, arXiv:1806.06630 [math.SG], 2018.

\bibitem{BEP-ball}
P.~Biran, M.~Entov, and L.~Polterovich.
\newblock Calabi quasimorphisms for the symplectic ball.
\newblock {\em Commun. Contemp. Math.}, 6(5):793--802, 2004.

\bibitem{BEHWZ-compactness}
F.~Bourgeois, Y.~Eliashberg, H.~Hofer, K.~Wysocki, and E.~Zehnder.
\newblock Compactness results in symplectic field theory.
\newblock {\em Geom. Topol.}, 7:799--888, 2003.

\bibitem{BKS-torus}
M.~Brandenbursky, J.~K\c{e}dra, and E.~Shelukhin.
\newblock On the autonomous norm on the group of {H}amiltonian diffeomorphisms
  of the torus.
\newblock {\em Commun. Contemp. Math.}, 20(2):1750042, 27, 2018.

\bibitem{BHS-spectrum}
L.~Buhovsky, V.~Huili\'{e}re, and S.~Seyfaddini.
\newblock The action spectrum and {$C^0$} symplectic topology.
\newblock Preprint arXiv:1808.09790, 2018.

\bibitem{scl}
D.~{Calegari}.
\newblock {\em {scl.}}
\newblock Tokyo: Mathematical Society of Japan, 2009.

\bibitem{CharestWoodward-Fukaya}
F.~Charest and C.~Woodward.
\newblock {Fukaya algebras via stabilizing divisors}.
\newblock {\em Preprint, arXiv:1505.08146 [math.SG]}.

\bibitem{CharestWoodward-Floer17}
F.~Charest and C.~Woodward.
\newblock Floer trajectories and stabilizing divisors.
\newblock {\em J. Fixed Point Theory Appl.}, 19(2):1165--1236, 2017.

\bibitem{Char}
F.~Charette.
\newblock A geometric refinement of a theorem of {C}hekanov.
\newblock {\em J. Symplectic Geom.}, 10(3):475--491, 2012.

\bibitem{CdSGO-structure}
F.~Chazal, V.~de~Silva, M.~Glisse, and S.~Oudot.
\newblock {\em The structure and stability of persistence modules}.
\newblock SpringerBriefs in Mathematics. Springer, [Cham], 2016.

\bibitem{CCSGGO-proximity}
F.~Chazal, D.~C. Steiner, M.~Glisse, L.~J. Guibas, and S.~Y. Oudot.
\newblock Proximity of persistence modules and their diagrams.
\newblock In {\em Proceedings of the 25th Annual Symposium on Computational
  Geometry, SCG ’09,}, pages 237--246. ACM, 2009.

\bibitem{ChekanovFinsler}
Y.~V. Chekanov.
\newblock Invariant {F}insler metrics on the space of {L}agrangian embeddings.
\newblock {\em Math. Z.}, 234(3):605--619, 2000.

\bibitem{CM-compactness}
K.~Cieliebak and K.~Mohnke.
\newblock Compactness for punctured holomorphic curves.
\newblock {\em J. Symplectic Geom.}, 3(4):589--654, 2005.
\newblock Conference on Symplectic Topology.

\bibitem{CieliebakMohnkePosInt}
K.~Cieliebak and K.~Mohnke.
\newblock Symplectic hypersurfaces and transversality in {G}romov-{W}itten
  theory.
\newblock {\em J. Symplectic Geom.}, 5(3):281--356, 2007.

\bibitem{CM-Lagr}
K.~Cieliebak and K.~Mohnke.
\newblock Punctured holomorphic curves and {L}agrangian embeddings.
\newblock {\em Invent. Math.}, 212(1):213--295, 2018.

\bibitem{CEH-stability}
D.~Cohen-Steiner, H.~Edelsbrunner, and J.~Harer.
\newblock Stability of persistence diagrams.
\newblock {\em Discrete Comput. Geom.}, 37(1):103--120, 2007.

\bibitem{CrawBo}
W.~Crawley-Boevey.
\newblock Decomposition of pointwise finite-dimensional persistence modules.
\newblock {\em J. Algebra Appl.}, 14(5):1550066, 8, 2015.

\bibitem{RizzSull-pers}
G.~Dimitroglou~Rizell and M.~Sullivan.
\newblock The persistence of the {C}hekanov-{E}liashberg algebra.
\newblock Preprint arXiv:1810.10473 [math.SG], 2018.

\bibitem{Entov-ICM}
M.~Entov.
\newblock Quasi-morphisms and quasi-states in symplectic topology.
\newblock In {\em Proceedings of the {I}nternational {C}ongress of
  {M}athematicians---{S}eoul 2014. {V}ol. {II}}, pages 1147--1171. Kyung Moon
  Sa, Seoul, 2014.

\bibitem{EntovPolterovichCalabiQM}
M.~Entov and L.~Polterovich.
\newblock Calabi quasimorphism and quantum homology.
\newblock {\em Int. Math. Res. Not.}, (30):1635--1676, 2003.

\bibitem{EPP-C0}
M.~Entov, L.~Polterovich, and P.~Py.
\newblock On continuity of quasimorphisms for symplectic maps.
\newblock In {\em Perspectives in analysis, geometry, and topology}, volume 296
  of {\em Progr. Math.}, pages 169--197. Birkh\"{a}user/Springer, New York,
  2012.
\newblock With an appendix by Michael Khanevsky.

\bibitem{EntPolPy}
M.~Entov, L.~Polterovich, and P.~Py.
\newblock On continuity of quasimorphisms for symplectic maps.
\newblock In {\em Perspectives in analysis, geometry, and topology}, volume 296
  of {\em Progr. Math.}, pages 169--197. Birkh\"{a}user/Springer, New York,
  2012.
\newblock With an appendix by Michael Khanevsky.

\bibitem{EntPolZap-poisson}
M.~Entov, L.~Polterovich, and F.~Zapolsky.
\newblock Quasi-morphisms and the {P}oisson bracket.
\newblock {\em Pure Appl. Math. Q.}, 3(4, Special Issue: In honor of Grigory
  Margulis. Part 1):1037--1055, 2007.

\bibitem{Floer1}
A.~Floer.
\newblock Proof of the {A}rnol'd conjecture for surfaces and generalizations to
  certain {K}\"ahler manifolds.
\newblock {\em Duke Math. J.}, 53(1):1--32, 1986.

\bibitem{Floer2}
A.~Floer.
\newblock Morse theory for fixed points of symplectic diffeomorphisms.
\newblock {\em Bull. Amer. Math. Soc. (N.S.)}, 16(2):279--281, 1987.

\bibitem{Floer3}
A.~Floer.
\newblock Symplectic fixed points and holomorphic spheres.
\newblock {\em Comm. Math. Phys.}, 120(4):575--611, 1989.

\bibitem{Fraser}
M.~{Fraser}.
\newblock {Contact spectral invariants and persistence}.
\newblock Preprint arXiv:1502.05979, 2015.

\bibitem{Frauenfelder}
U.~Frauenfelder.
\newblock The {A}rnold-{G}ivental conjecture and moment {F}loer homology.
\newblock {\em Int. Math. Res. Not.}, (42):2179--2269, 2004.

\bibitem{FOOO-polydiscs}
K.~Fukaya, Y.-G. Oh, H.~Ohta, and K.~Ono.
\newblock Displacement of polydisks and {L}agrangian {F}loer theory.
\newblock {\em J. Symplectic Geom.}, 11(2):231--268, 2013.

\bibitem{FukayaSeidelSmith-cotangentsc}
K.~Fukaya, P.~Seidel, and I.~Smith.
\newblock Exact {L}agrangian submanifolds in simply-connected cotangent
  bundles.
\newblock {\em Invent. Math.}, 172(1):1--27, 2008.

\bibitem{FukayaSeidelSmith-cotangentcat}
K.~Fukaya, P.~Seidel, and I.~Smith.
\newblock The symplectic geometry of cotangent bundles from a categorical
  viewpoint.
\newblock In {\em Homological mirror symmetry}, volume 757 of {\em Lecture
  Notes in Phys.}, pages 1--26. Springer, Berlin, 2009.

\bibitem{GambGhys-comm}
J.-M. Gambaudo and E.~Ghys.
\newblock Commutators and diffeomorphisms of surfaces.
\newblock {\em Ergodic Theory Dynam. Systems}, 24(5):1591--1617, 2004.

\bibitem{Ghys-ICM}
E.~Ghys.
\newblock Knots and dynamics.
\newblock In {\em International {C}ongress of {M}athematicians. {V}ol. {I}},
  pages 247--277. Eur. Math. Soc., Z\"{u}rich, 2007.

\bibitem{Ginzburg-CC}
V.~Ginzburg.
\newblock The {C}onley conjecture.
\newblock {\em Ann. of Math.}, 172:1127--1180, 2010.

\bibitem{GG-pseudorotations}
V.~Ginzburg and B.~G\"{u}rel.
\newblock Hamiltonian pseudo-rotations of projective spaces.
\newblock {\em Invent. Math.}, Online First:1--50, 2018.

\bibitem{Giroux-Don}
E.~{Giroux}.
\newblock {Remarks on Donaldson's symplectic submanifolds}.
\newblock Preprint arXiv:1803.05929 [math.SG], 2018.

\bibitem{GlasnerWeiss1}
E.~Glasner and B.~Weiss.
\newblock The topological {R}ohlin property and topological entropy.
\newblock {\em Amer. J. Math.}, 123(6):1055--1070, 2001.

\bibitem{GlasnerWeiss2}
E.~Glasner and B.~Weiss.
\newblock Topological groups with {R}okhlin properties.
\newblock {\em Colloq. Math.}, 110(1):51--80, 2008.

\bibitem{GromovPseudohol}
M.~Gromov.
\newblock {Pseudo holomorphic curves in symplectic manifolds.}
\newblock {\em Invent. Math.}, 82:307--347, 1985.

\bibitem{Hind-nearby}
R.~Hind.
\newblock Lagrangian spheres in {$S^2\times S^2$}.
\newblock {\em Geom. Funct. Anal.}, 14(2):303--318, 2004.

\bibitem{Hind-Stein}
R.~Hind.
\newblock Lagrangian unknottedness in {S}tein surfaces.
\newblock {\em Asian J. Math.}, 16(1):1--36, 2012.

\bibitem{HoferMetric}
H.~Hofer.
\newblock On the topological properties of symplectic maps.
\newblock {\em Proc. Roy. Soc. Edinburgh Sect. A}, 115(1-2):25--38, 1990.

\bibitem{huLalonde}
S.~Hu and F.~Lalonde.
\newblock A relative {S}eidel morphism and the {A}lbers map.
\newblock {\em Trans. Amer. Math. Soc.}, 362(3):1135--1168, 2010.

\bibitem{huLalondeLeclercq}
S.~Hu, F.~Lalonde, and R.~Leclercq.
\newblock Homological {L}agrangian monodromy.
\newblock {\em Geom. Topol.}, 15(3):1617--1650, 2011.

\bibitem{hyvrier}
C.~Hyvrier.
\newblock Lagrangian circle actions.
\newblock {\em Algebr. Geom. Topol.}, 16(3):1309--1342, 2016.

\bibitem{Kha-diam}
M.~Khanevsky.
\newblock Hofer's metric on the space of diameters.
\newblock {\em J. Topol. Anal.}, 1(4):407--416, 2009.

\bibitem{KS-bounds}
A.~Kislev and E.~Shelukhin.
\newblock Bounds on spectral norms and applications.
\newblock Preprint arXiv:1810.09865, 2018.

\bibitem{Lalonde-McDuff-Energy}
F.~Lalonde and D.~McDuff.
\newblock The geometry of symplectic energy.
\newblock {\em Ann. of Math. (2)}, 141(2):349--371, 1995.

\bibitem{Lanzat-qm}
S.~Lanzat.
\newblock Quasi-morphisms and symplectic quasi-states for convex symplectic
  manifolds.
\newblock {\em Int. Math. Res. Not.}, (23):5321--5365, 2013.

\bibitem{Leroux-six}
F.~Le~Roux.
\newblock Six questions, a proposition and two pictures on {H}ofer distance for
  {H}amiltonian diffeomorphisms on surfaces.
\newblock In {\em Symplectic topology and measure preserving dynamical
  systems}, volume 512 of {\em Contemp. Math.}, pages 33--40. Amer. Math. Soc.,
  Providence, RI, 2010.

\bibitem{LeclercqZapolsky}
R.~{Leclercq} and F.~{Zapolsky}.
\newblock Spectral invariants for monotone lagrangians.
\newblock {\em J. Topol. Anal.}
\newblock Available at https://doi.org/10.1142/S1793525318500267.

\bibitem{Lerman-cuts}
E.~Lerman.
\newblock Symplectic cuts.
\newblock {\em Math. Res. Lett.}, 2(3):247--258, 1995.

\bibitem{LSV-conj}
F.~Leroux, S.~Seyfaddini, and C.~Viterbo.
\newblock Barcodes and area-preserving homeomorphisms.
\newblock Preprint arXiv:1810.03139 [math.SG], 2018.

\bibitem{Liu-assoc}
G.~Liu.
\newblock Associativity of quantum multiplication.
\newblock {\em Comm. Math. Phys.}, 191(2):265--282, 1998.

\bibitem{MannWolff}
K.~Mann and M.~Wolff.
\newblock Rigidity of mapping class group actions on ${S}^1$.
\newblock Preprint arXiv:1808.02979 [math.GT], 2018.

\bibitem{McDuffSalamonBIG}
D.~{McDuff} and D.~{Salamon}.
\newblock {\em {$J$-holomorphic curves and symplectic topology. 2nd ed.}},
  volume~52.
\newblock Providence, RI: American Mathematical Society (AMS), 2nd ed. edition,
  2012.

\bibitem{MembrezOpshtein}
C.~Membrez and E.~Opshtein.
\newblock ${C}^0$-rigidity of {L}agrangian submanifolds and punctured
  holomorphic discs in the cotangent bundle.
\newblock Preprint arXiv:1712.06404 [math.SG], 2017.

\bibitem{MonznerVicheryZapolsky}
A.~Monzner, N.~Vichery, and F.~Zapolsky.
\newblock Partial quasimorphisms and quasistates on cotangent bundles, and
  symplectic homogenization.
\newblock {\em J. Mod. Dyn.}, 6(2):205--249, 2012.

\bibitem{Oh-specnorm}
Y.-G. {Oh}.
\newblock {Spectral invariants, analysis of the Floer moduli space, and
  geometry of the Hamiltonian diffeomorphism group.}
\newblock {\em {Duke Math. J.}}, 130(2):199--295, 2005.

\bibitem{OhBook}
Y.-G. Oh.
\newblock {\em Symplectic topology and {F}loer homology. {V}ols. 1 and 2},
  volume 27 and 28 of {\em New Mathematical Monographs}.
\newblock Cambridge University Press, Cambridge, 2015.
\newblock Symplectic geometry and pseudoholomorphic curves.

\bibitem{PSS}
S.~Piunikhin, D.~Salamon, and M.~Schwarz.
\newblock Symplectic {F}loer-{D}onaldson theory and quantum cohomology.
\newblock In {\em Contact and symplectic geometry ({C}ambridge, 1994)},
  volume~8 of {\em Publ. Newton Inst.}, pages 171--200. Cambridge Univ. Press,
  Cambridge, 1996.

\bibitem{PolterovichRosen}
L.~Polterovich and D.~Rosen.
\newblock {\em Function theory on symplectic manifolds}, volume~34 of {\em CRM
  Monograph Series}.
\newblock American Mathematical Society, Providence, RI, 2014.

\bibitem{PolShe}
L.~Polterovich and E.~Shelukhin.
\newblock Autonomous {H}amiltonian flows, {H}ofer's geometry and persistence
  modules.
\newblock {\em Selecta Math. (N.S.)}, 22(1):227--296, 2016.

\bibitem{PolShe-conv}
L.~Polterovich and E.~Shelukhin.
\newblock Private discussion.
\newblock 2018.

\bibitem{PolSheSto}
L.~Polterovich, E.~Shelukhin, and V.~Stojisavljevi\'c.
\newblock Persistence modules with operators in {M}orse and {F}loer theory.
\newblock {\em Mosc. Math. J.}, 17(4):757--786, 2017.

\bibitem{Py-torus}
P.~Py.
\newblock Quasi-morphismes de {C}alabi et graphe de {R}eeb sur le tore.
\newblock {\em C. R. Math. Acad. Sci. Paris}, 343(5):323--328, 2006.

\bibitem{Ritter-thesis}
A.~F. Ritter.
\newblock {\em The {N}ovikov theory for symplectic cohomology and exact
  {L}agrangian embeddings}.
\newblock ProQuest LLC, Ann Arbor, MI, 2009.
\newblock Thesis (Ph.D.)--Massachusetts Institute of Technology.

\bibitem{RuanTian-qh1}
Y.~Ruan and G.~Tian.
\newblock A mathematical theory of quantum cohomology.
\newblock {\em Math. Res. Lett.}, 1(2):269--278, 1994.

\bibitem{RuanTian-qh2}
Y.~Ruan and G.~Tian.
\newblock A mathematical theory of quantum cohomology.
\newblock {\em J. Differential Geom.}, 42(2):259--367, 1995.

\bibitem{Schwarz:action-spectrum}
M.~Schwarz.
\newblock On the action spectrum for closed symplectically aspherical
  manifolds.
\newblock {\em Pacific J. Math.}, 193(2):419--461, 2000.

\bibitem{SeidelMCG}
P.~Seidel.
\newblock Symplectic {F}loer homology and the mapping class group.
\newblock {\em Pacific J. Math.}, 206(1):219--229, 2002.

\bibitem{SeidelBook}
P.~Seidel.
\newblock {\em Fukaya categories and {P}icard-{L}efschetz theory}.
\newblock Zurich Lectures in Advanced Mathematics. European Mathematical
  Society (EMS), Z\"urich, 2008.

\bibitem{Seyfaddini-descent}
S.~Seyfaddini.
\newblock Descent and {$C^0$}-rigidity of spectral invariants on monotone
  symplectic manifolds.
\newblock {\em J. Topol. Anal.}, 4(4):481--498, 2012.

\bibitem{SeyfaddiniC0Limits}
S.~Seyfaddini.
\newblock {$C^0$}-limits of {H}amiltonian paths and the {O}h-{S}chwarz spectral
  invariants.
\newblock {\em Int. Math. Res. Not. IMRN}, (21):4920--4960, 2013.

\bibitem{Seyfaddini-disk}
S.~Seyfaddini.
\newblock The displaced disks problem via symplectic topology.
\newblock {\em C. R. Math. Acad. Sci. Paris}, 351(21-22):841--843, 2013.

\bibitem{ShelukhinHZ}
E.~Shelukhin.
\newblock In preparation.

\bibitem{STV-Flux18}
E.~Shelukhin, D.~Tonkonog, and R.~Vianna.
\newblock {Geometry of symplectic flux and Lagrangian torus fibrations}.
\newblock {\em Preprint, arXiv:1804.02044 [math.SG]}.

\bibitem{SmithPencils}
I.~Smith.
\newblock Floer cohomology and pencils of quadrics.
\newblock {\em Invent. Math.}, 189(1):149--250, 2012.

\bibitem{stevenson}
B.~Stevenson.
\newblock A quasi-isometric embedding into the group of {H}amiltonian
  diffeomorphisms with {H}ofer's metric.
\newblock {\em Israel J. Math.}, 223(1):141--195, 2018.

\bibitem{StoZhang}
V.~Stojisavljevi\'c and J.~Zhang.
\newblock Persistence modules, symplectic {B}anach-{M}azur distance and
  {R}iemannian metrics.
\newblock Preprint arXiv:1810.11151 [math.SG], 2018.

\bibitem{TonkonogDescendants}
D.~{Tonkonog}.
\newblock String topology with gravitational descendants, and periods of
  {L}andau-{G}inzburg potentials.
\newblock Preprint, arXiv:1801.06921, 2018.

\bibitem{Usher-BM}
M.~Usher.
\newblock Symplectic {B}anach-{M}azur distances between subsets of ${C}^n$.
\newblock Preprint arXiv:1811.00734 [math.SG], 2018.

\bibitem{Usher-sharp}
M.~{Usher}.
\newblock {The sharp energy-capacity inequality.}
\newblock {\em {Commun. Contemp. Math.}}, 12(3):457--473, 2010.

\bibitem{UsherBD1}
M.~Usher.
\newblock Boundary depth in {F}loer theory and its applications to
  {H}amiltonian dynamics and coisotropic submanifolds.
\newblock {\em Israel J. Math.}, 184:1--57, 2011.

\bibitem{UsherBD2}
M.~Usher.
\newblock Hofer's metrics and boundary depth.
\newblock {\em Ann. Sci. \'Ec. Norm. Sup\'er. (4)}, 46(1):57--128 (2013), 2013.

\bibitem{UsherZhang}
M.~Usher and J.~Zhang.
\newblock Persistent homology and {F}loer--{N}ovikov theory.
\newblock {\em Geom. Topol.}, 20(6):3333--3430, 2016.

\bibitem{Verine-Don}
A.~{Verine}.
\newblock {Bohr-Sommerfeld Lagrangian submanifolds as minima of convex
  functions}.
\newblock Preprint arXiv:1803.07162 [math.SG], 2018.

\bibitem{Viterbo-specGF}
C.~Viterbo.
\newblock Symplectic topology as the geometry of generating functions.
\newblock {\em Math. Ann.}, 292(4):685--710, 1992.

\bibitem{Viterbo-homog}
C.~Viterbo.
\newblock Symplectic homogenization.
\newblock 2014.
\newblock Preprint arXiv:0801.0206 [math.SG].

\bibitem{Witten-2d}
E.~Witten.
\newblock Two-dimensional gravity and intersection theory on moduli space.
\newblock In {\em Surveys in differential geometry ({C}ambridge, {MA}, 1990)},
  pages 243--310. Lehigh Univ., Bethlehem, PA, 1991.

\bibitem{Zap:Orient}
F.~Zapolsky.
\newblock {The Lagrangian Floer-quantum-PSS package and canonical orientations
  in Floer theory}.
\newblock {\em Preprint}, 2015.
\newblock arXiv:1507.02253 [math.SG].

\bibitem{Zhang}
J.~{Zhang}.
\newblock {p-cyclic persistent homology and Hofer distance}.
\newblock Preprint arXiv:1605.07594, 2016.

\bibitem{CarlZom}
A.~Zomorodian and G.~Carlsson.
\newblock Computing persistent homology.
\newblock {\em Discrete Comput. Geom.}, 33(2):249--274, 2005.

\end{thebibliography}

\end{document}